\newtheorem{thm}{Theorem}[section]
\newtheorem*{theorem*}{Theorem}
\newtheorem*{acknowledgement*}{Acknowledgement}
\newtheorem{cor}[thm]{Corollary}
\newtheorem{lem}[thm]{Lemma}
\newtheorem{prop}[thm]{Proposition}
\theoremstyle{definition}
\newtheorem{defn}[thm]{Definition}
\theoremstyle{remark}
\newtheorem{rem}[thm]{Remark}
\numberwithin{equation}{section}
\newcommand{\norm}[1]{\left\Vert#1\right\Vert}
\newcommand{\abs}[1]{\left\vert#1\right\vert}
\newcommand{\set}[1]{\left\{#1\right\}}
\newcommand{\Real}{\mathbb R}
\newcommand{\func}[1]{\ensuremath{\mathop{\mathrm{#1}}} }
\newcommand{\spt}[0]{\func{spt}}
\newcommand{\sing}[0]{\func{sing}}
\newcommand{\reg}[0]{\func{reg}}
\newcommand{\xX}[0]{\mathbf{x}}
\newcommand{\yY}[0]{\mathbf{y}}
\newcommand{\hH}[0]{\mathbf{H}}
\newcommand{\IV}{\mathbf{IV}}
\newcommand{\IM}{\mathcal{IM}}
\newcommand{\M}{\mathcal{M}}
\newcommand{\OO}{\mathbf{0}}
\title[A sharp lower bound for the entropy of closed hypersurfaces]{A sharp lower bound for the entropy of closed hypersurfaces up to dimension six}
\author{Jacob Bernstein}
\address{Department of Mathematics, Johns Hopkins University, 3400 N. Charles Street, Baltimore, MD 21218}
\email{bernstein@math.jhu.edu}
\author{Lu Wang}
\address{Department of Mathematics, University of Wisconsin, 480 Lincoln Drive, Madison, WI 53706}
\email{luwang@math.wisc.edu}
\thanks{The first author was partially supported by the EPSRC Programme Grant entitled ``Singularities of Geometric Partial Differential Equations'' grant number EP/K00865X/1 and by the NSF Grant DMS-1307953. The second author was partially supported by the Chapman Fellowship of Imperial College London, the AMS-Simons Travel Grant 2012-2014, and the NSF Grant DMS-1406240.}
\begin{document}
\begin{abstract}  
The entropy is a natural geometric quantity which measures the complexity of a hypersurface in $\Real^{n+1}$.  It is non-increasing along the mean curvature flow and so plays a significant role in analyzing the dynamics of this flow. In \cite{CIMW}, Colding-Ilmanen-Minicozzi-White showed that within the class of closed smooth self-shrinking solutions of the mean curvature flow in $\Real^{n+1}$, the entropy is uniquely minimized at the round sphere. They conjectured that, for $2\leq n\leq 6$, the round sphere minimizes the entropy among all closed hypersurfaces. Using an appropriate weak mean curvature flow, we prove their conjecture. For these dimensions, our approach also gives a new proof of the main result of \cite{CIMW} and extends its conclusions to compact singular self-shrinking solutions. 
\end{abstract}
\maketitle

\section{Introduction}\label{Intro}
On $\Real^{n+1}$, consider the Gaussian weight
\begin{equation}
\Phi(\xX)=(4\pi)^{-\frac{n}{2}}e^{-\frac{|\xX|^2}{4}}.
\end{equation}
If $\Sigma$ is a hypersurface, that is, a smooth properly embedded codimension-one submanifold of $\Real^{n+1}$, the 
\emph{Gaussian surface area} of $\Sigma$ is
\begin{equation}
\mathbf{F}[\Sigma]=\int_{\Sigma}\Phi\, d\mathcal{H}^{n},
\end{equation}
where $\mathcal{H}^n$ is $n$-dimensional Hausdorff measure. Define the \emph{entropy} of $\Sigma$ by
\begin{equation}
\lambda[\Sigma]=\sup_{(\yY,\rho)\in\Real^{n+1}\times\Real^+}\mathbf{F}\left[\rho\Sigma+\yY\right].
\end{equation}
That is, the entropy of $\Sigma$ is the supremum of the Gaussian surface area over all translations and scalings of $\Sigma$. Hence, the entropy is invariant under these symmetries. Observe that the entropy of a hyperplane is one.

For $2\leq n\leq 6$, we show that the entropy of closed  (compact and without boundary) 
hypersurfaces is minimized by round spheres, verifying a conjecture of Colding-Minicozzi-Ilmanen-White \cite[Conjecture 0.9]{CIMW}. 
Let $\mathbb{S}^n$ be the round sphere in $\Real^{n+1}$ centered at the origin $\OO$ with radius $\sqrt{2n}$.
\begin{thm}\label{MainThm}
For $2\leq n\leq 6$, if $\Sigma$ is a closed hypersurface, then $\lambda[\Sigma]\geq\lambda[\mathbb{S}^n]>1$ with equality if and only if 
$\Sigma=\rho \mathbb{S}^n+\yY$ for some $\rho>0$ and $\yY\in \Real^{n+1}$.
\end{thm}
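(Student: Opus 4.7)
The plan is to run an appropriate weak mean curvature flow starting from $\Sigma$, extract a compact (possibly singular) self-shrinker $\Sigma_\infty$ as a blow-up at an extinction point, and then conclude via an extension of \cite[Theorem 0.7]{CIMW} to the singular case. The entire argument rests on the heuristic that singularities of mean curvature flow of closed hypersurfaces, viewed at extinction, are modeled by compact self-shrinkers.

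First, I would construct a Brakke-type weak mean curvature flow $\mathcal{M}=\{\mu_t\}_{t\geq 0}$ with $\mu_0=\mathcal{H}^n\llcorner\Sigma$ (for instance via Ilmanen's elliptic regularization) and verify that the entropy monotonicity of \cite{CM}, namely $\lambda[\mu_t]\leq\lambda[\Sigma]$, continues to hold along $\mathcal{M}$; this rests on a Brakke version of Huisken's monotonicity \cite{Huisken}. Since $\Sigma$ is closed, $\mathcal{M}$ has finite extinction time $T>0$. Next, pick an extinction point $(x_0,T)$ and perform a parabolic blow-up of $\mathcal{M}$ about $(x_0,T)$. By Huisken's monotonicity and standard compactness for Brakke flows with locally bounded Gaussian density, a subsequence converges to a self-shrinking Brakke flow, associated to a weak self-shrinker $\Sigma_\infty$ that satisfies \eqref{SelfshrinkerEqn} in a varifold sense. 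Crucially, because $(x_0,T)$ is an extinction point and not merely a singular point, I expect the blow-up flow to genuinely vanish at time zero, forcing $\Sigma_\infty$ to be \emph{compact}; the lower semicontinuity of $\mathbf{F}$ under varifold convergence together with the entropy monotonicity then yield $\lambda[\Sigma_\infty]\leq\lambda[\Sigma]$.

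The remaining ingredient is the extension of \cite[Theorem 0.7]{CIMW} announced in the abstract: in dimensions $2\leq n\leq 6$, every compact, possibly singular, self-shrinker $\Sigma_\infty$ satisfies $\lambda[\Sigma_\infty]\geq\lambda[\mathbb{S}^n]$, with equality only for the round sphere. The smooth proof in \cite{CIMW} identifies translations and dilations as a negative-direction subspace for the second variation of $\mathbf{F}$, so that any non-spherical compact shrinker admits a perturbation with strictly smaller entropy; I would try to push the same variational argument to the singular setting once $\sing(\Sigma_\infty)$ is shown to be sufficiently small. The dimension hypothesis $n\leq 6$ should enter here via Federer-style dimension reduction, since for stationary $\mathbf{F}$-critical varifolds the singular set should have Hausdorff codimension at least $7$ (as in the Simons regularity theory for stable minimal hypersurfaces); for $n\leq 6$ the singular set is thus either empty or small enough to admit capacity cutoffs, enabling the integration by parts and eigenvalue analysis to go through. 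Combining the steps gives $\lambda[\Sigma]\geq\lambda[\Sigma_\infty]\geq\lambda[\mathbb{S}^n]$, and tracing back through the rigidity of the monotonicity inequalities upgrades the equality case to the statement that $\Sigma$ is a round sphere up to translation and dilation.

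The main obstacle is the compactness claim in the second step: one must exploit the true extinction nature of $(x_0,T)$ to rule out non-compact tangent shrinkers, and this is quite subtle since cylindrical tangent flows also locally vanish at time zero, so the argument has to use more than just local collapse. A secondary, purely analytic difficulty is the extension of \cite{CIMW} to singular shrinkers, where one must justify the stability/eigenvalue computations across the singular set --- this is where the hypothesis $n\leq 6$ is likely sharp.
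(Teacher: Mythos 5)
Your high-level plan (flow to extinction, blow up, invoke a CIMW-type lower bound) is the right shape, and you correctly identify the main obstacle yourself: the compactness of the extinction blow-up. But the way you propose to close the argument diverges from the paper's proof in two essential places, and the first of these is a genuine gap rather than just a technical detail.

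The central problem is that blowing up a Brakke flow at an extinction point does \emph{not} produce a compact tangent flow in any direct way, and not merely because of cylinders: a general Brakke flow is allowed to vanish gratuitously, so the clean statement ``extinction forces compact blow-up'' is simply false for arbitrary weak flows. The paper sidesteps this by never attempting to show the blow-up is compact. Instead it introduces the dichotomy of \emph{collapsed} vs.\ \emph{non-collapsed} self-shrinking measures (Definition \ref{Nonterminaldefn}) and shows three things: (i) the canonical boundary motion of Ilmanen (built from a level-set flow under a non-fattening condition) produces a Brakke flow that cannot vanish gratuitously, and by a maximum-principle argument it must be collapsed at its extinction time (Lemma \ref{BoundaryMotionExtinctTimeLem}); (ii) being collapsed is a closed condition under Brakke flow convergence (Proposition \ref{NonTermOpenCondProp}), hence all tangent flows at the extinction point inherit it (Proposition \ref{NonAmpleProp}); (iii) a collapsed self-shrinking measure of entropy less than $\lambda_n$ either has compact support or splits off a Euclidean factor and reduces to a compact shrinker in strictly lower dimension (Proposition \ref{GeneralAsympProp}). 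This structure theorem, combined with a dimension induction, is how the paper ultimately reaches a compact shrinker --- it never shows any single tangent flow is compact, only that the ``core'' of a collapsed shrinker eventually is. Your proposal has no substitute for this machinery, and your stated hope that the extinction point would directly yield compactness is exactly the point where the argument breaks.

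The second divergence is in the final step. You propose to extend the variational/eigenvalue argument of \cite{CIMW} to singular shrinkers, with capacity cutoffs across a small singular set; you also expect the singular set of a low-entropy shrinking measure to have codimension $\geq 7$ by a Federer/Simons argument. Neither matches the paper. The dimension reduction in Proposition \ref{SizesingularProp} gives only codimension $3$ for shrinking measures of entropy below $3/2$, because the only input is the nonexistence of nontrivial low-entropy stationary cones in dimensions $1$ and $2$; codimension $7$ is obtained only for the \emph{entropy minimizer}, and only after entropy stability is established (Lemma \ref{CpctShrinkerInductionLem}), via \cite[Theorem 0.14]{CM} and Schoen--Simon regularity for stable cones. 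Moreover the paper does not adapt the CIMW second-variation computation at all: it instead takes a minimizing sequence in $\mathcal{CSM}_n(\lambda_n)$, uses Allard compactness plus the collapsed/closedness machinery to get a compact minimizer $\mu_0$, shows $\mu_0$ is entropy stable by a deformation argument (perturbing away from the small singular set and reapplying Proposition \ref{NonAmpleProp}), and then applies the Colding--Minicozzi classification \cite[Theorem 0.14]{CM} directly to force $\mu_0$ to be the sphere, a contradiction. That is a cleaner route than trying to push the CIMW eigenvalue analysis through a singular set, and it is also where $n\leq 6$ enters (through \cite{CM} and \cite{SchoenSimon}), not through a codimension-$7$ estimate for general low-entropy shrinkers.

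So: the strategy is recognizably the same family of ideas, but your second step as written does not go through, your third step proposes to prove a harder analytic statement than the paper actually needs, and you omit the canonical boundary motion / non-fattening genericity step that is what actually prevents the flow from dying for spurious reasons. To repair the argument along the paper's lines you would need the collapsed/non-collapsed dichotomy, Ilmanen's canonical boundary motion with the non-fattening genericity, the splitting-and-induction structure theorem, and the entropy-stability-plus-\cite{CM} finish in place of a direct extension of \cite{CIMW}.
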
  
This result also holds for certain sets of low regularity; see Corollary \ref{MainCor}. When $n=1$, Theorem \ref{MainThm} is a direct 
consequence of the work of Grayson \cite{Grayson} and Gage-Hamilton \cite{GH} and the monotonicity of entropy under curve shortening flow. 

In \cite[Theorem 0.7]{CIMW}, Colding-Ilmanen-Minicozzi-White showed that if $\Sigma$ is a closed {self-shrinker} in $\Real^{n+1}$, then $\lambda[\Sigma]\geq\lambda[\mathbb{S}^n]$ with equality if and only if $\Sigma=\mathbb{S}^n$. Recall, a \emph{self-shrinker} is an $\mathbf{F}$-stationary hypersurface, i.e., a solution to the Euler-Lagrange equation
\begin{equation} \label{SelfshrinkerEqn}
\mathbf{H}+\frac{\xX^\perp}{2}=0.
\end{equation}
Here $\xX^\perp$ is the normal component of the position vector $\xX$ and $\mathbf{H}$ is the mean curvature vector defined by
\begin{equation}
\mathbf{H}=-H\mathbf{n}=-\operatorname{div}(\mathbf{n})\mathbf{n},
\end{equation}
where $\mathbf{n}$ is a choice of unit normal and $H$ is the mean curvature.  If $\Sigma$ solves \eqref{SelfshrinkerEqn}, then the self-similar family of hypersurfaces $\set{\sqrt{-t}\, \Sigma}_{t<0}$ is a classical solution to the mean curvature flow
\begin{equation}\label{MCFeqn}
 \left(\frac{d\xX}{dt}\right)^{\perp}=\hH,
\end{equation}
justifying the terminology. Using the  monotonicity formula of Huisken \cite{Huisken}, Colding-Minicozzi \cite{CM} observed that the entropy is non-increasing along 
solutions of \eqref{MCFeqn}. In fact, the entropy of any closed hypersurface strictly decreases unless the initial hypersurface is obtained by translating and dilating a self-shrinker. As such, the entropy yields important information about the dynamical properties of the flow. 

% 
% Broadly speaking, Theorem \ref{MainThm} falls into a class of problems which may be thought of as parabolic analogs of long-standing 
% questions about the possible densities of singular minimal hypersurfaces or, equivalently, the possible areas of minimal hypersurfaces in 
% the sphere; see the 
% introduction of \cite{IWCone}.  There has been remarkable recent progress on such problems. For instance, as part of proving the Willmore 
% conjecture, Marques-Neves \cite{MN} showed that after the equatorial sphere, the (oriented) minimal surface in $\mathbb{S}^3$ of least 
% area is the Clifford torus. In another direction, Ilmanen-White \cite{IWCone} gave a sharp asymptotic lower bound for the density of a 
% large class of area-minimizing hypercones as the dimension goes to $\infty$.  Interestingly, while \cite{MN} uses min-max theory,  
% \cite{IWCone} uses parabolic methods.

To prove Theorem \ref{MainThm}, we use properties of weak mean curvature flows starting from compact initial data. As such our argument is 
independent of \cite{CIMW}. Indeed, we give a new proof of \cite[Theorem 0.7]{CIMW} for $2\leq n\leq 6$ which also extends this result to 
compact singular self-shrinkers. The key idea of our proof is that any weak flow starting from a compact initial hypersurface must become 
extinct in finite time and, moreover, if one is careful with the choice of weak flow, then the flow forms a singularity of a special type as 
it becomes extinct. Indeed, we show that these singularities must be modeled on {collapsed} singular self-shrinkers; see Definition 
\ref{Nonterminaldefn} and Proposition \ref{NonAmpleProp}. We further show that the space of collapsed singular self-shrinkers with small 
entropy is compact and its elements have good regularity properties; see Propositions \ref{NonTermOpenCondProp} and \ref{SizesingularProp}. 
By a careful induction argument, we are able to show that the minimal entropy of collapsed singular self-shrinkers is achieved on a compact 
shrinker $\Sigma_0$; see Lemma \ref{CpctShrinkerInductionLem}. Hence, using our initial observation, we conclude that $\Sigma_0$ is {entropy 
stable} and so by \cite[Theorem 0.14]{CM} must be the round sphere when $2\leq n \leq 6$.  The upper bound for the dimension comes from our 
inability, at present, to rule out the existence of singular stable stationary cones with small entropy when $n\geq 7$.

\begin{rem}
Very recently Ketover-Zhou \cite{KZh} have developed a min-max theory for the Gaussian surface area and used it to give an alternative proof of \cite[Conjecture 0.9]{CIMW} for closed surfaces in $\mathbb{R}^3$ that are not tori. Their argument is modeled on that used by Marques-Neves \cite{MN} to show the Willmore conjecture, and so is completely different from ours.
\end{rem}

Finally, we note that in \cite[Conjecture 0.10]{CIMW}, Colding-Ilmanen-Minicozzi-White further conjectured that among all non-flat complete self-shrinkers, the round sphere has the lowest entropy. By taking a tangent flow at the first singular time of a classical mean curvature flow, one observes that, modulo regularity issues, Theorem \ref{MainThm} would follow from this stronger conjecture. Using the conclusions of this paper, we verify this conjecture for the class of \emph{partially collapsed} self-shrinkers. Essentially, a self-shrinker is partially collapsed if there is an asymptotic direction in which it is ``small''; see Definition \ref{partiallycollapsedDefn}. 
\begin{thm} \label{CollapsedShrinkers}
For $2\leq n\leq 7$, if $\Sigma$ is a complete non-compact self-shrinker in $\Real^{n+1}$ which is partially collapsed, then $\lambda[\Sigma]\geq\lambda[\mathbb{S}^{n-1}](>\lambda[\mathbb{S}^n])$ with equality if and only if, up to an ambient rotation of $\Real^{n+1}$, $\Sigma=\mathbb{S}^{n-1}\times\Real$.
\end{thm}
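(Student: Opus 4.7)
The plan is to use the partial collapsing hypothesis to extract, by a blow-up/tangent-flow argument, a compact (possibly singular) self-shrinker $\Sigma'\subset\R^{n}$ of dimension $n-1$, and then to apply Corollary~\ref{MainCor} (the extension of Theorem~\ref{MainThm} to compact singular shrinkers) to $\Sigma'$. The link between $\lambda[\Sigma]$ and $\lambda[\Sigma']$ is provided by Huisken's monotonicity formula together with the product formula $\lambda[\Sigma'\times\R]=\lambda[\Sigma']$.

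After a rotation, I may assume that the asymptotic direction of collapse from Definition~\ref{partiallycollapsedDefn} is $\eE_{n+1}$. Consider the self-similar mean curvature flow $\{\Sigma_t=\sqrt{-t}\,\Sigma\}_{t<0}$, which becomes extinct at $t=0$. The partial collapsing forces the support of $\Sigma_t$ to concentrate near the hyperplane $\{x_{n+1}=0\}$ as $t\to 0^-$. I would first choose a base point $\yY_0\in\{x_{n+1}=0\}$ in the essential part of this concentration and verify that Huisken's monotonicity produces a positive Gaussian density
\[
\Theta(\yY_0,0)=\lim_{t\to 0^-}\int_{\Sigma_t}(4\pi(-t))^{-n/2}\exp\!\left(-\tfrac{|\xX-\yY_0|^2}{-4t}\right)d\mathcal{H}^{n}.
\]
Because $\{\Sigma_t\}$ is already self-similar about $\OO$ while a parabolic rescaling based at $(\yY_0,0)$ translates that base to infinity in a direction transverse to $\eE_{n+1}$, the resulting tangent flow picks up an extra translation invariance in the $\eE_{n+1}$ direction. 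Consequently, it splits as $\Sigma'\times\R\,\eE_{n+1}$, where $\Sigma'\subset\R^{n}$ is a self-shrinker; the slab constraint on $\Sigma$ transfers to $\Sigma'$, and combined with the uniform entropy bound and the regularity results of Propositions~\ref{NonTermOpenCondProp} and~\ref{SizesingularProp}, this forces $\Sigma'$ to be compact with a controlled singular set.

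From Huisken's monotonicity and the product formula for entropy one then has
\[
\lambda[\Sigma]\;\geq\;\Theta(\yY_0,0)\;=\;\lambda[\Sigma'\times\R\,\eE_{n+1}]\;=\;\lambda[\Sigma'].
\]
For $3\leq n\leq 7$ the dimension $n-1$ lies in $\{2,\ldots,6\}$, so Corollary~\ref{MainCor} gives $\lambda[\Sigma']\geq\lambda[\mathbb{S}^{n-1}]$ with equality only for the round $\mathbb{S}^{n-1}$; the case $n=2$ reduces via the same argument to the classification of closed shrinking curves of Gage--Hamilton and Grayson. The strict inequality $\lambda[\mathbb{S}^{n-1}]>\lambda[\mathbb{S}^{n}]$ is the standard decrease of sphere entropies in dimension.

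For rigidity, equality $\lambda[\Sigma]=\lambda[\mathbb{S}^{n-1}]$ forces $\Sigma'=\mathbb{S}^{n-1}$ via Corollary~\ref{MainCor}, and $\Theta(\yY_0,0)=\lambda[\Sigma]$ saturates Huisken's monotonicity, forcing the shrinking flow to be self-similar about $(\yY_0,0)$ as well; this rigidity identifies $\{\Sigma_t\}$ with a round shrinking cylinder, so $\Sigma=\mathbb{S}^{n-1}\times\R$ after undoing the rotation. The principal technical obstacle is the first step: exhibiting $\yY_0$ and proving that the tangent flow at $(\yY_0,0)$ genuinely splits off a line in the $\eE_{n+1}$ direction with a compact transverse factor. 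This requires a quantitative use of the partial collapsing hypothesis together with a compactness theorem for self-shrinkers of bounded entropy trapped in a slab, in the same spirit as the collapsed-shrinker compactness underlying the earlier sections of the paper.
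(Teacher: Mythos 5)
Your overall strategy -- take the tangent flow at the partial-collapse point, split off a line, and reduce to Corollary \ref{MainCor} for the compact transverse factor -- is indeed the paper's approach (via Corollary \ref{EntLowBndCollapsedCor} and Lemma \ref{stratificationLem}). But the geometric picture you set up is wrong, and this propagates into genuine gaps.

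First, the concentration is onto a \emph{line}, not a hyperplane. In Definition \ref{partiallycollapsedDefn}, the distinguished point $\yY\neq\OO$ itself is where the Gaussian density is $\geq 1$ and the tangent flow is collapsed, so $\yY$ lies along what becomes the axis of the cylinder. For the model $\Sigma=\mathbb{S}^{n-1}\times\Real\,\eE_{n+1}$, the rescaled surfaces $\sqrt{-t}\,\Sigma$ concentrate onto the $x_{n+1}$-axis as $t\to 0^-$, not onto $\{x_{n+1}=0\}$, and points $\yY_0\in\{x_{n+1}=0\}\setminus\{\OO\}$ have $\Theta_{(\yY_0,0)}(\mathcal{K})=0$. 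Moreover, Lemma \ref{stratificationLem} gives that the tangent flow at $(\yY,0)$ is translation-invariant \emph{in the direction of $\yY$} -- the self-similarity about $\OO$ plus rescaling about $\yY$ pushes the center to infinity along $\yY$, not transverse to it -- so the split line is parallel to $\yY$, the opposite of what you asserted.

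Second, there is no ``slab constraint'' on $\Sigma$ in the hypothesis, so you cannot use it to deduce compactness of the transverse factor $\hat{\nu}_{-1}$. The paper gets compactness differently: the tangent flow is assumed collapsed, hence its $(-1)$-slice, and hence the split-off factor $\hat{\nu}_{-1}\in\mathcal{SM}_{n-1}$, are collapsed; then Proposition \ref{GeneralAsympProp} together with Proposition \ref{ShrinkerMinimizersVarifoldProp} (which says $\mathcal{CSM}_k(\lambda_k)=\emptyset$ for $2\leq k\leq 6$) forces $\hat{\nu}_{-1}$ to be a compact boundary measure. This chain of reasoning is the missing content.

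Third, the $n=2$ case cannot be waved off by appealing to Gage--Hamilton--Grayson. There the transverse factor lives in $\mathcal{SM}_1$ with entropy only below $\lambda_1\approx 1.52$, which is \emph{above} $3/2$, so the small-entropy regularity machinery (Lemma \ref{1dimconeLem}, Proposition \ref{SizesingularProp}) does not apply; a priori the factor could be a singular object such as a triple-junction cone ($\lambda=3/2$), and Corollary \ref{MainCor} itself does not cover $n-1=1$. The paper resolves this by using the smoothness of $\Sigma$ to get that the associated Brakke flow is cyclic mod 2 in the sense of \cite{WhiteMod2}; this passes to tangent flows, forces tangent cones of the 1-d factor to consist of an \emph{even} number of rays, and together with the bound $\lambda<2$ yields that $\hat{\nu}_{-1}$ is a smooth complete curve, after which Abresch--Langer classification finishes. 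Without this mod~2 input, the $n=2$ case is open in your argument.
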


\section{Notation}
We will make heavy use of the results of \cite{Ilmanen1} on weak mean curvature flows. For this reason, we follow the notation of \cite{Ilmanen1} as closely as possible. 

Denote by
\begin{itemize}
 \item $\M(\Real^{n+1})=\set{\mu: \mu\mbox{ is a Radon measure on $\Real^{n+1}$}}$ (see \cite[Section 4]{Simon});
 \item $\IM_k(\Real^{n+1})=\set{\mu: \mu\mbox{ is an integer $k$-rectifiable Radon measure on $\Real^{n+1}$}}$ (see \cite[Section 1]{Ilmanen1});
 \item $\IV_k(\Real^{n+1})=\set{V: V\mbox{ is an integer rectifiable $k$-varifold on $\Real^{n+1}$}}$ (see \cite[Section 1]{Ilmanen1} or \cite[Chapter 8]{Simon}).
\end{itemize}
The space $\M(\Real^{n+1})$ is given the weak* topology. That is,
\begin{equation}
 \mu_i\to\mu\iff\int f\, d\mu_i\to\int f \, d\mu\mbox{  for all $f\in C^0_c(\Real^{n+1})$}.
\end{equation}
And the topology on $\IM_k(\Real^{n+1})$ is the subspace topology induced by the natural inclusion into $\M(\Real^{n+1})$. For the details of the topologies considered on $ \IV_k(\Real^{n+1})$, we refer to \cite[Section 1]{Ilmanen1} or \cite[Chapter 8]{Simon}. There are natural bijective maps
\begin{equation}
\begin{array}{ccc}
 V: \IM_k(\Real^{n+1})\to \IV_k(\Real^{n+1}) & \mbox{and} & \mu:\IV_k(\Real^{n+1})\to\IM_k(\Real^{n+1}).
\end{array} 
\end{equation}
The second map is continuous, but the first is not. Henceforth, write $V(\mu)=V_\mu$ and $\mu(V)=\mu_V$. 

If $\Sigma\subset\Real^{n+1}$ is a $k$-dimensional smooth properly embedded submanifold, we denote by $\mu_\Sigma=\mathcal{H}^k\lfloor\Sigma\in\IM_k(\Real^{n+1})$. Given $(\yY,\rho)\in\Real^{n+1}\times\Real^+$ and $\mu\in\IM_k(\Real^{n+1})$, we define the rescaled measure $\mu^{\yY,\rho}\in\IM_k(\Real^{n+1})$ by
\begin{equation}
 \mu^{\yY,\rho}(\Omega)=\rho^{k}\mu\left(\rho^{-1}\Omega+\yY\right).
\end{equation}
This is defined so that if $\Sigma$ is a $k$-dimensional smooth properly embedded submanifold, then
\begin{equation}
\mu^{\yY,\rho}_\Sigma=\mu_{\rho (\Sigma-\yY)}. 
\end{equation}
One of the defining properties of $\mu\in \IM_k(\Real^{n+1})$ is that for $\mu$-a.e. $\xX\in\Real^{n+1}$, there is an integer value 
$\theta_\mu(\xX)$ so that
\begin{equation}
 \lim_{\rho\to \infty}\mu^{\xX,\rho}=\theta_\mu(\xX)\mu_{P},
\end{equation}
where $P$ is a $k$-dimensional plane through the origin. When such $P$ exists, we denote it by $T_{\xX} \mu$ the \emph{approximate tangent plane at $\xX$}. The value $\theta_\mu(\xX)$ is the \emph{multiplicity of $\mu$ at $\xX$} and by definition, $\theta_\mu(\xX)\in\mathbb{N}$ for $\mu$-a.e. $\xX$. Notice that if $\mu=\mu_{\Sigma}$, then $T_{\xX}\mu=T_{\xX}\Sigma$ and $\theta_\mu(\xX)=1$. Given a $\mu\in \IM_n(\Real^{n+1})$, set
\begin{equation}
\reg(\spt(\mu))=\set{\xX\in\spt(\mu): \exists\rho>0 \mbox{ s.t. $B_\rho(\xX)\cap\spt(\mu)$ is a hypersurface}}
\end{equation}
and $\sing(\spt(\mu))=\spt(\mu)\setminus\reg(\spt(\mu))$. Here $B_\rho(\xX)$ is the open ball in $\Real^{n+1}$ centered at $\xX$ with radius $\rho$. Likewise,
\begin{equation}
\begin{array}{ccc} \reg(\mu)=\set{\xX\in \reg(\spt(\mu)): \theta_\mu(\xX)=1} & \mbox{and} & \sing(\mu)=\spt(\mu)\setminus\reg(\mu).
\end{array}
\end{equation}

For $\mu\in\IM_n(\Real^{n+1})$, we extend the definitions of $\mathbf{F}$ and $\lambda$ in the obvious manner, namely,
\begin{equation}
%\begin{array}{ccc}
\mathbf{F}[\mu]=\mathbf{F}[V_\mu]=\int \Phi\, d\mu \quad \mbox{and} \quad \lambda[\mu]=\lambda[V_\mu]=\sup_{(\yY,\rho)\in\Real^{n+1}\times\Real^+}\mathbf{F}\left[\mu^{\yY,\rho}\right].
%\end{array}
\end{equation}

Finally, we will need to consider certain oriented sets with possibly singular boundaries. For our purposes, sets of finite perimeter will suffice. Recall that a set $E$ is of \emph{locally finite perimeter}, if the characteristic function $\chi_E$ is in $BV_{loc}(\Real^{n+1})$ the space of functions with locally bounded variation; see \cite[Sections 6 and 14]{Simon}. By De Giorgi's Theorem (see \cite[Theorem 14.3]{Simon}), if $E$ is of locally finite perimeter, then the reduced boundary $\partial^\ast E$ as defined in \cite[Equation (14.2)]{Simon} is $n$-rectifiable and the total variation measure $\abs{D\chi_E}=\mathcal{H}^n\lfloor\partial^\ast E\in \IM_n(\Real^{n+1})$.
\begin{defn}
 A $\mu\in\IM_n(\Real^{n+1})$ is a \emph{compact boundary measure}, if there is a bounded open non-empty subset $E\subset\Real^{n+1}$ of locally finite perimeter so that $\spt (\mu)=\partial E$ and $\mu=\mathcal{H}^n\lfloor\partial^\ast E$. Such a set $E$ is called the \emph{interior} of $\mu$.
\end{defn}
The following approximation lemma, which is essentially \cite[Theorem 1.24]{Giusti}, will be used later in our proof of Theorem \ref{MainThm}. Since the proof of Lemma \ref{ApproxLem} is slightly technical, it may be found in Appendix \ref{ProofApprox}.
\begin{lem}\label{ApproxLem}
Let $E$ be a set of locally finite perimeter and suppose $B_{2r}(\xX)\subset E\subset B_{R}(\xX)$. Then there exists a sequence of open sets 
$E_j$ of finitely many components so that the $\partial E_j$ are hypersurfaces and:
\begin{enumerate}
\item $B_{r}(\xX)\subset E_j\subset B_{2R}(\xX)$; 
\item $\chi_{E_j}\to\chi_E$ in $L^1(\Real^{n+1})$; 
\item $\abs{D\chi_{E_j}}\to\abs{D\chi_E}$ in the sense of measures.
\end{enumerate}
\end{lem}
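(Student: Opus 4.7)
The plan is to combine the standard mollification-and-level-set construction from the proof of \cite[Theorem 1.24]{Giusti} with the observation that the containment hypothesis on $E$ is preserved by mollification.

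First, let $\eta_j$ be a smooth radially symmetric mollifier with $\spt\eta_j \subset B_{1/j}(\OO)$ and set $f_j = \eta_j \ast \chi_E \in C^\infty(\Real^{n+1})$. Standard facts give $0 \le f_j \le 1$, $f_j \to \chi_E$ in $L^1(\Real^{n+1})$, and, by strong $BV$ approximation, $\int |\nabla f_j|\, dx \to |D\chi_E|(\Real^{n+1})$. The crucial observation is that for $1/j < \min\set{r, R}$, the inclusion $B_{2r}(\xX) \subset E$ forces $f_j \equiv 1$ on $B_{2r - 1/j}(\xX) \supset B_r(\xX)$, while $E \subset B_R(\xX)$ forces $f_j \equiv 0$ off $B_{R + 1/j}(\xX) \subset B_{2R}(\xX)$.

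Next, I would apply the co-area formula together with Sard's theorem to select good level sets. Write $g_j(t) = |D\chi_{\set{f_j > t}}|(\Real^{n+1})$. The co-area formula gives $\int_0^1 g_j(t)\, dt = \int|\nabla f_j|\, dx \to |D\chi_E|(\Real^{n+1})$. On the other hand, for every $t \in (0,1)$ the $L^1$ convergence $f_j \to \chi_E$ yields $\chi_{\set{f_j > t}} \to \chi_E$ in $L^1$ (since $t \cdot |\set{f_j>t} \setminus E| + (1-t)\cdot|E\setminus\set{f_j>t}| \le \int|f_j - \chi_E|$), and hence by lower semicontinuity of perimeter, $\liminf_j g_j(t) \ge |D\chi_E|(\Real^{n+1})$ for each such $t$. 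A short computation with the negative part of $g_j(t) - |D\chi_E|(\Real^{n+1})$ then forces $g_j \to |D\chi_E|(\Real^{n+1})$ in $L^1((0,1))$, so that along a subsequence the $g_j$ converge pointwise almost everywhere. Combining this with Sard's theorem (which yields that almost every $t$ is a regular value of $f_j$ for each fixed $j$), I would select $t_j \in (1/3, 2/3)$ such that $\set{f_j = t_j}$ is a smooth compact embedded hypersurface and $g_j(t_j) \to |D\chi_E|(\Real^{n+1})$. Set $E_j = \set{f_j > t_j}$.

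Finally, I would verify the three conclusions. Conclusion (1) is immediate from the containment properties of $f_j$, and $E_j$ has finitely many components because $\partial E_j$ is a smooth compact hypersurface. Conclusion (2) follows from $f_j \to \chi_E$ in $L^1$ together with $t_j \in (1/3, 2/3)$. For (3), the $L^1$ convergence $\chi_{E_j} \to \chi_E$ and integration by parts give convergence of the vector-valued Radon measures $D\chi_{E_j} \to D\chi_E$ in the sense of distributions; combined with the total-mass convergence $|D\chi_{E_j}|(\Real^{n+1}) \to |D\chi_E|(\Real^{n+1})$ from the selection step, a standard Radon-measure argument (or Reshetnyak continuity) upgrades this to convergence of the total variation measures in the sense of measures. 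The principal technical obstacle is the simultaneous selection of $t_j$ ensuring both the Sard smoothness condition and the perimeter convergence; this hinges essentially on the \emph{strong} $BV$ approximation $\int|\nabla f_j|\, dx \to |D\chi_E|(\Real^{n+1})$ rather than mere uniform boundedness.
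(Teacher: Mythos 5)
Your proposal is correct and follows essentially the same route as the paper, with the emphasis reversed: you unpack the mollification-and-level-set construction which the paper simply cites from \cite[Theorem 1.24]{Giusti}, while you cite Reshetnyak continuity (or call ``standard'' the cutoff argument with $M-f$) for the upgrade from total-mass convergence to weak-$*$ convergence of $\abs{D\chi_{E_j}}$, which is exactly the step the paper chooses to spell out in its appendix. One small imprecision worth tightening: after obtaining $g_j\to\abs{D\chi_E}(\Real^{n+1})$ in $L^1((0,1))$, simply passing to a pointwise a.e.\ convergent subsequence does not by itself let you choose $t_j$ depending on $j$ that are simultaneously Sard-regular for $f_j$ and satisfy $g_j(t_j)\to\abs{D\chi_E}(\Real^{n+1})$; the cleanest fix is either to pick, for each $j$, a regular value $t_j\in(1/3,2/3)$ with $\abs{g_j(t_j)-\abs{D\chi_E}(\Real^{n+1})}\le 3\int_{1/3}^{2/3}\abs{g_j-\abs{D\chi_E}(\Real^{n+1})}\,dt$, or to note that the set of $t$ that are regular values of \emph{every} $f_j$ and lie in the pointwise-convergence co-null set is still co-null, and take a single such $t^*$ for all $j$.
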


\section{Weak Mean Curvature Flows}\label{Wmcf}
In this section, we give a brief review of various notions of weak mean curvature flow in both the measure-theoretic and set-theoretic 
senses as well as fix our notation for them. As this section is rather technical, it may be skimmed on first reading and the expert should feel free to consult it only as needed.  
The key fact from this section that will be used in what follows is the existence of a suitable weak mean curvature flow, called a canonical boundary motion, starting from generic closed hypersurfaces.
Loosely speaking, the canonical boundary motions are distinguished by their regularity properties.  Specifically,   Huisken's monotonicity formula \cite{Huisken} and the regularity theory of 
Brakke \cite{B} apply to them and they cannot disappear suddenly.
Their existence is ensured by work of Ilmanen \cite{Ilmanen1}.

\subsection{Brakke flow}\label{Brakkef}
Historically, the first weak mean curvature flow was the measure-theoretic flow introduced by Brakke \cite{B}. This flow is called a 
\emph{Brakke flow}. Brakke's original definition considered the flow of varifolds. Here we use the (slightly stronger) notion introduced by 
Ilmanen \cite{Ilmanen1}. For our purposes, the Brakke flow has two important roles. The first is the fact that Huisken's monotonicity 
formula \cite{Huisken} holds also for Brakke flows; see \cite{Ilmanen1} or \cite{White}. The second is the powerful regularity theory of 
Brakke \cite{B} for such flows. A major technical difficulty inherent in using Brakke flows is that there is a great deal of non-uniqueness. 
Most problematic for our applications is that, by construction, Brakke flows are allowed to vanish suddenly and gratuitously.

Let $\mu\in\mathcal{M}(\Real^{n+1})$ and $\phi\in C^2_c(\Real^{n+1},\Real^{\geq 0})$. Following \cite[Section 6.2]{Ilmanen1}, if one of the following cases happens
\begin{enumerate}
\item $\mu\lfloor\{\phi>0\}$ is not an $n$-rectifiable Radon measure;
\item $|\delta V|\lfloor\{\phi>0\}$ is not a Radon measure on $\{\phi>0\}$, where $V=V_{\mu}\lfloor\{\phi>0\}$;
\item $|\delta V|\lfloor\{\phi>0\}$ is singular with respect to $\mu\lfloor\{\phi>0\}$;
\item $\int\phi |\hH|^2\; d\mu=\infty$, where $\mathbf{H}=d(\delta V)/d\mu\lfloor\{\phi>0\}$,
\end{enumerate}
then take $\mathcal{B}(\mu,\phi)=-\infty$. Otherwise, let
\begin{equation}
\mathcal{B}(\mu,\phi)=\int -\phi H^2+D\phi\cdot S^\perp\cdot\mathbf{H}\, d\mu,
\end{equation}
where $S=S(\xX)=T_{\xX}\mu$ for $\mathcal{H}^n$-a.e. $\xX\in\{\phi>0\}$ and $S^\perp\cdot \yY$ is understood to mean to project the vector $\yY$ onto the line $S^\perp$ perpendicular to $S$. 

Let $I$ denote an interval in $\Real$. The \emph{upper derivative} of a function $f:I\to\Real$ is
\begin{equation}
\bar{D}_{t_0}f(t)=\limsup_{t\in I, t\to t_0}\frac{f(t)-f(t_0)}{t-t_0}.
\end{equation}

A family $\mathcal{K}=\set{\mu_t}_{t\in I}$ with $\mu_t\in\mathcal{M}(\Real^{n+1})$ is a \emph{codimension-one Brakke flow in $\Real^{n+1}$}, if for all $t\in I$ and $\phi\in C^2_c(\Real^{n+1},\Real^{\geq 0})$,
\begin{equation} \label{BrakkeIneq}
\bar{D}_t\mu_t(\phi)\leq\mathcal{B}(\mu_t,\phi),
\end{equation}
or equivalently, for all $t_1,t_2\in I$ with $t_1\leq t_2$ and $\phi\in C^1_c(\Real^{n+1},\Real^{\geq 0})$,
\begin{equation}
\int\phi \, d\mu_{t_2}\leq\int\phi \, d\mu_{t_1}+\int_{t_1}^{t_2}\mathcal{B}(\mu_t,\phi)\, dt.
\end{equation}
Given $U\subset\Real^{n+1}$ a non-empty open subset and a subinterval $I^\prime\subset I$, we could restrict $\mathcal{K}$ to $U\times I^\prime$ by
\begin{equation}
\mathcal{K}\lfloor U\times I^\prime=\set{\mu_t\lfloor U}_{t\in I^\prime},
\end{equation}
which clearly satisfies the inequality \eqref{BrakkeIneq} for all $\phi\in C^2_c(U,\Real^{\geq 0})$. When the meaning is clear from context, we will suppress mention of the codimension and ambient domain and speak of a \emph{Brakke flow}. A Brakke flow, $\mathcal{K}=\set{\mu_t}_{t\in I}$ is \emph{integral} if $\mu_t\in \IM_n(\Real^{n+1})$ for a.e. $t\in I$. 

We will generally restrict our attention to Brakke flows $\mathcal{K}=\set{\mu_t}_{t\geq{t_0}}$ with bounded area ratios, i.e., for which there is a $C<\infty$ so that for all $t\geq t_0$, 
\begin{equation}\label{ArearatioEqn}
\sup_{\xX\in\Real^{n+1}}\sup_{R>0}\frac{\mu_t\left(B_R(\xX)\right)}{R^n}\leq C.
\end{equation}

Ilmanen \cite{Ilmanen2} observed that the monotonicity formula of Huisken \cite{Huisken} could be extended to hold for Brakke flows with initial data satisfying \eqref{ArearatioEqn}; see also\cite[Section 10]{White}.
\begin{prop}[{\cite[Lemma 7]{Ilmanen2}}]\label{MonotoneProp}
Given $\mathcal{K}=\set{\mu_t}_{t\geq t_0}$ a Brakke flow, suppose that $\mu_{t_0}$ satisfies \eqref{ArearatioEqn}. Then for any $(\yY,s)$ in $\Real^{n+1}\times(t_0,\infty)$ and all $t_0\leq t_1\leq t_2<s$,
\begin{equation}
\begin{split}
& \int\Phi_{(\yY,s)} (\xX,t_2)\, d\mu_{t_2}(\xX)-\int\Phi_{(\yY,s)} (\xX,t_1)\, d\mu_{t_1}(\xX)\\
\leq & -\int_{t_1}^{t_2}\int\Phi_{(\yY,s)}(\xX,t)\left\vert\mathbf{H}(\xX,t)+\frac{S^\perp (\xX,t)\cdot(\xX-\yY)}{2(s-t)}\right\vert^2\, d\mu_t(\xX)dt,
\end{split}
\end{equation}
where $S(\xX,t)=T_{\xX}\mu_t$ and 
\begin{equation}
\Phi_{(\yY,s)}(\xX,t)=(s-t)^{-\frac{n}{2}}\Phi\left(\frac{\xX-\yY}{\sqrt{s-t}}
\right).
\end{equation}
\end{prop}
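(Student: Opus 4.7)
The plan is to substitute the backward heat kernel $\phi = \Phi_{(\yY,s)}(\cdot,t)\,\eta_R$ into the integrated form of the Brakke inequality \eqref{BrakkeIneq}, where $\eta_R$ is a smooth radial cutoff satisfying $\eta_R \equiv 1$ on $B_R(\yY)$, $\eta_R \equiv 0$ off $B_{2R}(\yY)$, and $|D^j \eta_R| \leq C R^{-j}$ for $j=1,2$, and then pass to the limit $R \to \infty$. The key algebraic identity, valid $\mu_t$-almost everywhere at a point with approximate tangent plane $S$, is the completion of squares
\begin{equation*}
\partial_t \Phi_{(\yY,s)} - \Phi_{(\yY,s)} H^2 + D\Phi_{(\yY,s)}\cdot S^\perp \cdot \hH = -\Phi_{(\yY,s)}\left|\hH + \frac{S^\perp\cdot(\xX-\yY)}{2(s-t)}\right|^2,
\end{equation*}
which follows from the backward heat equation $(\partial_t + \Delta_{\R^{n+1}})\Phi_{(\yY,s)} = 0$ by decomposing the ambient Laplacian into its tangential and normal components at $S$. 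Time-dependence of the test function is handled by the standard telescoping trick that produces a version of \eqref{BrakkeIneq} with an additional $\int\partial_t\phi\,d\mu_t$ term on the right. Combining this with the identity yields the target inequality up to remainders supported in the annulus $A_R = B_{2R}(\yY)\setminus B_R(\yY)$.

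The main obstacle is killing these annular error terms as $R\to\infty$. Schematically they are bounded by
\begin{equation*}
\int_{t_1}^{t_2}\!\!\int_{A_R}\Phi_{(\yY,s)}\bigl(R^{-1}|\hH| + R^{-2}\bigr)\,d\mu_t\,dt,
\end{equation*}
and since $\Phi_{(\yY,s)}$ enjoys Gaussian decay of order $e^{-R^2/(8(s-t))}$ on $A_R$, these vanish provided the mass ratios $\mu_t(B_\rho(\yY))/\rho^n$ grow at most polynomially in $\rho$, uniformly for $t \in [t_1,t_2]$. The $|\hH|$ factor is then controlled by Cauchy–Schwarz against the weighted $L^2$ norm $\int\Phi_{(\yY,s)}|\hH|^2\,d\mu_t$, which is integrable in $t$ because the Brakke inequality applied to a compactly supported cutoff already bounds this quantity by the initial mass.

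The required polynomial mass bound is inherited from \eqref{ArearatioEqn} at $t=t_0$, but it must first be propagated forward in time. I would establish this by the same cutoff substitution carried out with a purely spatial test function $\eta^2_{\zZ,\rho}$ concentrated on $B_{2\rho}(\zZ)$; this local monotonicity gives a bound $\mu_t(B_\rho(\zZ)) \leq C'(n, s-t_0, C)\,\rho^n$ valid for all $\zZ \in \R^{n+1}$, $\rho>0$, and $t \in [t_0, s)$. With this bound in hand, the Gaussian decay of $\Phi_{(\yY,s)}$ defeats the polynomial mass growth, the annular remainders vanish as $R \to \infty$, and the inequality in the proposition follows. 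This bootstrap — using a localized version of the same monotonicity argument to supply the mass bound that makes the global monotonicity work — is where I expect to spend the most effort; once it is in place, the rest is a careful but routine substitution into Brakke's definition.
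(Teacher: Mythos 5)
The overall strategy---cutoff against the Brakke inequality, annular error estimates via Gaussian decay, propagating area ratios---matches how Ilmanen proves this (and is essentially the standard localized Huisken argument). However, your central ``key algebraic identity'' is wrong, and the reason you give for it is also wrong, so as written the proof does not go through.

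First, $\Phi_{(\yY,s)}$ does \emph{not} satisfy the ambient backward heat equation. Writing $\sigma = s-t$, a direct computation gives
\begin{equation*}
\partial_t\Phi_{(\yY,s)} + \Delta_{\R^{n+1}}\Phi_{(\yY,s)} = -\frac{1}{2\sigma}\,\Phi_{(\yY,s)},
\end{equation*}
because the prefactor $(4\pi\sigma)^{-n/2}$ has the $n$-dimensional normalization while the Gaussian lives in $\R^{n+1}$; the ambient heat kernel would require exponent $-(n+1)/2$. This mismatch is deliberate and is precisely what makes Huisken's formula work, so one cannot invoke $(\partial_t+\Delta_{\R^{n+1}})\Phi=0$. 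Second, even with the correct heat-type identity, the pointwise ``completion of squares'' you write down is false: it is missing the tangential Laplacian. The correct pointwise statement (for a smooth hypersurface with tangent plane $S$) is
\begin{equation*}
\partial_t\Phi_{(\yY,s)} + \Delta_S\Phi_{(\yY,s)} + D\Phi_{(\yY,s)}\cdot\hH - \Phi_{(\yY,s)} H^2 = -\Phi_{(\yY,s)}\left\vert \hH + \frac{S^\perp(\xX-\yY)}{2(s-t)}\right\vert^2,
\end{equation*}
and your version simply drops $\Delta_S\Phi_{(\yY,s)}$. This term does not vanish pointwise. In the varifold setting one cannot integrate it away directly either, since $\Delta_S\Phi$ is only an a.e.-defined quantity depending on the (measurable) tangent plane field. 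The correct mechanism is the first-variation identity $\int \operatorname{div}_S X\,d\mu_t = -\int X\cdot\hH\,d\mu_t$ applied to the ambient, compactly supported (after cutoff) vector field $X = \tfrac{\eta_R\Phi_{(\yY,s)}}{2(s-t)}(\xX-\yY)$: the resulting $-\operatorname{div}_S X$ produces $-\tfrac{n}{2(s-t)}\Phi$ and the tangential quadratic term, which exactly cancel $\partial_t\Phi$ and, together with $D\Phi\cdot S^\perp\cdot\hH - \Phi H^2$, assemble into $-\Phi\,\vert\hH + \tfrac{(\xX-\yY)^\perp}{2(s-t)}\vert^2$. Until this integration-by-parts step replaces the fictitious pointwise identity, your argument has a gap at its algebraic core. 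Once that is corrected, the annular estimates and area-ratio propagation you outline are the right ingredients for passing $R\to\infty$.
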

An easy, but useful, consequence of Proposition \ref{MonotoneProp} is that
\begin{cor}\label{ArearatioCor}
If $\mathcal{K}=\set{\mu_t}_{t\geq t_0}$ is a Brakke flow and $\mu_{t_0}$ satisfies \eqref{ArearatioEqn}, then $\mathcal{K}$ has bounded area ratios.
\end{cor}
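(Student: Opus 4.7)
The plan is to apply the monotonicity formula (Proposition \ref{MonotoneProp}) with a well-chosen base point in order to transfer the area-ratio bound from the initial time $t_0$ to all later times. Fix $t\geq t_0$, $\xX_0\in\Real^{n+1}$ and $R>0$, and choose
\[
\yY=\xX_0,\qquad s=t+R^2,
\]
so that $t_0\leq t_0\leq t<s$. Applying Proposition \ref{MonotoneProp} with $t_1=t_0$ and $t_2=t$, and dropping the non-positive error term, gives
\[
\int \Phi_{(\xX_0,t+R^2)}(\xX,t)\,d\mu_t(\xX)\;\leq\;\int \Phi_{(\xX_0,t+R^2)}(\xX,t_0)\,d\mu_{t_0}(\xX).
\]

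For the left-hand side, note that $\Phi_{(\xX_0,t+R^2)}(\xX,t)=R^{-n}(4\pi)^{-n/2}e^{-|\xX-\xX_0|^2/(4R^2)}$, which on $B_R(\xX_0)$ is bounded below by $R^{-n}(4\pi)^{-n/2}e^{-1/4}$. Hence
\[
R^{-n}(4\pi)^{-n/2}e^{-1/4}\,\mu_t(B_R(\xX_0))\;\leq\;\int \Phi_{(\xX_0,t+R^2)}(\xX,t_0)\,d\mu_{t_0}(\xX).
\]
For the right-hand side, set $\sigma^2=t+R^2-t_0\geq R^2$. Then the integrand equals $\sigma^{-n}(4\pi)^{-n/2}e^{-|\xX-\xX_0|^2/(4\sigma^2)}$, and the standard annular decomposition
\[
\int e^{-|\xX-\xX_0|^2/(4\sigma^2)}\,d\mu_{t_0}(\xX)\;\leq\;\mu_{t_0}(B_\sigma(\xX_0))+\sum_{k=0}^{\infty}e^{-4^{k-1}}\mu_{t_0}(B_{2^{k+1}\sigma}(\xX_0))
\]
combined with the hypothesis $\mu_{t_0}(B_r(\xX_0))\leq Cr^n$ shows that the right-hand side is bounded by $\sigma^{-n}\cdot C\sigma^n\cdot C_n(4\pi)^{-n/2}$ for a dimensional constant $C_n$. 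Rearranging gives $\mu_t(B_R(\xX_0))\leq C' R^n$ with $C'=e^{1/4}C_nC$ independent of $t$, $\xX_0$ and $R$.

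There really is no serious obstacle here: the choice $s=t+R^2$ is dictated by scale-invariance, and the only computational step is the annular sum, which converges because the Gaussian tail kills the polynomial growth coming from the bounded-area-ratio estimate at $t_0$. The key conceptual point is simply that the monotonicity formula of Proposition \ref{MonotoneProp} provides a one-sided estimate that is strong enough to propagate a bound of the form \eqref{ArearatioEqn} forward in time.
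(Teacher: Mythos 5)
Your proof is correct, and it is exactly the argument the paper has in mind when it calls the corollary an ``easy, but useful, consequence'' of Proposition \ref{MonotoneProp} and omits the proof. The choice $s=t+R^2$, the Gaussian lower bound on $B_R(\xX_0)$, and the dyadic annular sum against the bounded-area-ratio hypothesis at $t_0$ constitute the standard way this implication is established; there is nothing to compare because the paper leaves it implicit.
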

%\begin{proof}
%For $s>t$ and $\xX\in\Real^{n+1}$, it follows from Proposition \ref{MonotoneProp} and \eqref{ArearatioEqn} that
%\begin{equation}
%\begin{split}
%&\left(4\pi (s-t)\right)^{-\frac{n}{2}} e^{-\frac{1}{4}}\, \mu_t\left(B_{\sqrt{s-t}}(\xX)\right) \\
%\leq & \int\Phi_{(\xX,s)}(\yY,t)\, d\mu_t(\yY)\leq\int\Phi_{(\xX,s)}(\yY,0)\, d\mu_0(\yY)\\
%= & (4\pi s)^{-\frac{n}{2}}\int e^{-\frac{\abs{\xX-\yY}^2}{4s}}\, d\mu_0(\yY)\\
%\leq & (4\pi s)^{-\frac{n}{2}}\sum_{k=0}^\infty C\, e^{-k} (k+1)^{\frac{n}{2}} (4s)^{\frac{n}{2}}
%\leq C\, \pi^{-\frac{n}{2}} \sum_{k=0}^\infty e^{-k} (k+1)^{\frac{n}{2}}.
%\end{split}
%\end{equation}
%It is clear that the infinite sum in the above equations is converging. Hence the corollary follows immediately from setting $s=t+R^2$.
%\end{proof}
Another important consequence of Proposition \ref{MonotoneProp} is that if a Brakke flow $\mathcal{K}=\set{\mu_t}_{t\geq t_0}$ has bounded area ratios, then $\mathcal{K}$ has a well defined \emph{Gaussian density} at every point $(\yY,s)\in\Real^{n+1}\times (t_0,\infty)$ defined by
\begin{equation}
\Theta_{(\yY,s)}(\mathcal{K})=\lim_{t\to s^-}\int\Phi_{(\yY,s)}(\xX,t)\, d\mu_t(\xX).
\end{equation}
It is easy to see from the monotonicity formula that $\Theta_{(\yY,s)}(\mathcal{K})\geq\theta_{\mu_s}(\yY)$. Moreover, the Gaussian density is upper semi-continuous:
\begin{cor}\label{UpperctsCor}
If $\mathcal{K}=\set{\mu_t}_{t\geq t_0}$ is a Brakke flow with bounded area ratios, then the map $(\yY,s)\mapsto\Theta_{(\yY,s)}(\mathcal{K})$ is upper semi-continuous on $\Real^{n+1}\times(t_0,\infty)$.
\end{cor}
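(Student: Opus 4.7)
My plan is to combine the monotonicity formula from Proposition \ref{MonotoneProp} with a dominated-convergence argument enabled by the bounded-area-ratios hypothesis. Upper semi-continuity is the assertion that whenever $(\yY_i,s_i)\to(\yY_\infty,s_\infty)$ in $\Real^{n+1}\times(t_0,\infty)$, one has $\limsup_{i\to\infty}\Theta_{(\yY_i,s_i)}(\mathcal{K})\leq\Theta_{(\yY_\infty,s_\infty)}(\mathcal{K})$, so I fix such a sequence throughout.

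First I would fix any $t\in(t_0,s_\infty)$. For all $i$ sufficiently large, $t<s_i$, so Proposition \ref{MonotoneProp} applied on $[t,s_i)$, together with the definition of $\Theta_{(\yY_i,s_i)}(\mathcal{K})$ as a limit (in fact an infimum, by monotonicity), gives
\begin{equation*}
\Theta_{(\yY_i,s_i)}(\mathcal{K})\leq\int\Phi_{(\yY_i,s_i)}(\xX,t)\,d\mu_t(\xX).
\end{equation*}

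The main step is then to show
\begin{equation*}
\lim_{i\to\infty}\int\Phi_{(\yY_i,s_i)}(\xX,t)\,d\mu_t(\xX)=\int\Phi_{(\yY_\infty,s_\infty)}(\xX,t)\,d\mu_t(\xX).
\end{equation*}
On any ball $B_R(\yY_\infty)$, the integrands converge uniformly to the limit since $\Phi_{(\yY,s)}(\xX,t)$ is jointly continuous on $\{s>t\}\times\Real^{n+1}$, and $\mu_t(B_R(\yY_\infty))<\infty$ because $\mathcal{K}$ has bounded area ratios (by Corollary \ref{ArearatioCor}, the bound $\mu_t(B_\rho(\xX))\leq C\rho^n$ holds for all $\xX,\rho$). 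For the tail, the Gaussian decay of $\Phi_{(\yY_i,s_i)}(\cdot,t)$ (with $s_i-t$ bounded away from $0$ and from $\infty$ for large $i$) combined with $\mu_t(B_\rho(\xX))\leq C\rho^n$ and a dyadic-annulus decomposition around $\yY_\infty$ yields
\begin{equation*}
\int_{\Real^{n+1}\setminus B_R(\yY_\infty)}\Phi_{(\yY_i,s_i)}(\xX,t)\,d\mu_t(\xX)\leq\epsilon(R),\qquad\epsilon(R)\to 0\text{ as }R\to\infty,
\end{equation*}
with $\epsilon(R)$ independent of $i$ for all large $i$. Splitting the integral at $\partial B_R(\yY_\infty)$ and letting $R\to\infty$ after $i\to\infty$ establishes the claimed convergence.

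Combining these two steps, taking $\limsup_i$ in the monotonicity bound gives
\begin{equation*}
\limsup_{i\to\infty}\Theta_{(\yY_i,s_i)}(\mathcal{K})\leq\int\Phi_{(\yY_\infty,s_\infty)}(\xX,t)\,d\mu_t(\xX).
\end{equation*}
Finally, letting $t\to s_\infty^-$ in the right-hand side and invoking the definition of $\Theta_{(\yY_\infty,s_\infty)}(\mathcal{K})$ completes the proof. The only delicate point is the uniform-in-$i$ tail estimate in the dominated-convergence step; this is exactly where the bounded-area-ratios hypothesis is essential, since without it one could not rule out mass escaping to infinity along the moving Gaussian weights.
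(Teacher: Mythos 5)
Your proof is correct, and it is precisely the standard argument that the paper leaves implicit (the corollary is stated without proof, as an immediate consequence of Proposition \ref{MonotoneProp}). The structure — bounding $\Theta_{(\yY_i,s_i)}(\mathcal{K})$ by the weighted integral at a fixed earlier time $t$ via monotonicity, passing to the limit in $i$ using the continuity of the Gaussian kernel on compacta together with uniform Gaussian tails controlled by the bounded-area-ratio estimate $\mu_t(B_\rho(\xX))\leq C\rho^n$, and then sending $t\to s_\infty^-$ — is exactly right, and you have correctly identified the role of the bounded-area-ratios hypothesis in making the tail estimate uniform in $i$.
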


Following Ilmanen \cite[Section 7]{Ilmanen1}, we say that a sequence of Brakke flows $\mathcal{K}^i=\set{\mu_t^i}_{t\geq t_0}$ converges to a Brakke flow $\mathcal{K}=\set{\mu_t}_{t\geq t_0}$, if
\begin{enumerate}
 \item $\mu^i_t\to \mu_t$ for all $t\geq t_0$; and
 \item for a.e. $t\geq t_0$, there is a subsequence $i_k$, depending on $t$, so that $V_{\mu^{i_k}_t}\to V_{\mu_t}$.
\end{enumerate}
Convergence for flows with varying initial times is defined analogously.

Based on the idea of Brakke \cite[Chapter 4]{B}, Ilmanen proved the following compactness theorem for integral Brakke flows.
\begin{thm}[{\cite[Theorem 7.1]{Ilmanen1}}]\label{BrakkeFlowCompactThm}
Let $\mathcal{K}^i=\set{\mu_t^i}_{t\geq t_0}$ be a sequence of integral Brakke flows so that for all bounded open $U\subset\Real^{n+1}$,
\begin{equation}
 \sup_{i}\sup_{t\in [t_0,\infty)}\mu^i_t(U)\leq C(U)<\infty.
\end{equation}
There is a subsequence $i_k$ and an integral Brakke flow $\mathcal{K}=\set{\mu_t}_{t\geq t_0}$ so that $\mathcal{K}^{i_k}\to\mathcal{K}$.
\end{thm}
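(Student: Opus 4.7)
The plan is to follow Brakke's original scheme: first extract a subsequence converging as Radon measures at every time, verify integrality for almost every time, and then check that the defining inequality survives the limit.

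\emph{Step 1: Measure-theoretic compactness at each time.} The uniform mass bound $\mu^i_t(U) \leq C(U)$ places the $\mu^i_t$ in a weak$^*$-compact subset of $\M(\Real^{n+1})$ for each fixed $t$. Fix a countable dense subset $Q \subset [t_0,\infty)$. A standard diagonal extraction produces a subsequence, still denoted by $i$, such that $\mu^i_t \to \mu_t$ in $\M(\Real^{n+1})$ for every $t \in Q$.

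\emph{Step 2: Extension to all times.} For any $\phi \in C^2_c(\Real^{n+1},\Real^{\geq 0})$, the Brakke inequality together with the pointwise estimate $|D\phi \cdot S^\perp \cdot \mathbf{H}| \leq \tfrac{1}{2}\phi|\mathbf{H}|^2 + \tfrac{1}{2}|D\phi|^2/\phi$ shows that
\begin{equation*}
t \mapsto \mu^i_t(\phi) + C_\phi \, t
\end{equation*}
is non-increasing in $t$, with a constant $C_\phi$ depending only on $\phi$ and the uniform mass bound on $\spt\phi$. A Helly-type selection argument, applied diagonally over a countable dense family of test functions, upgrades Step~1 so that $\mu^i_t \to \mu_t$ at every $t \in [t_0,\infty)$.

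\emph{Step 3: Integrality at almost every time.} Integrating the Brakke inequality from $t_0$ to $T$ with the same pointwise estimate yields
\begin{equation*}
\int_{t_0}^T \int \phi \, |\mathbf{H}|^2 \, d\mu^i_t \, dt \leq C(\phi, T).
\end{equation*}
Hence for a.e.\ $t$, the varifolds $V_{\mu^i_t}$ carry locally uniformly bounded $L^2$ mean curvature. For such $t$, Allard's integral compactness theorem extracts a subsequence (depending on $t$) with $V_{\mu^i_t} \to V_{\mu_t}$ in $\IV_n(\Real^{n+1})$; in particular $\mu_t \in \IM_n(\Real^{n+1})$.

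\emph{Step 4: Passing the Brakke inequality to the limit.} This is the main obstacle, as $\mathcal{B}$ depends nonlinearly on the mean curvature, which is not weakly continuous. The left-hand side $\mu^i_{t_2}(\phi) - \mu^i_{t_1}(\phi)$ converges by Step~2. For the right-hand side, split $\mathcal{B} = -\int\phi|\mathbf{H}|^2 d\mu + \int D\phi\cdot S^\perp\cdot\mathbf{H}\,d\mu$; at a.e.\ time, varifold convergence combined with the $L^2$ bound from Step~3 gives lower semi-continuity of the Dirichlet-type term $V \mapsto \int\phi|\mathbf{H}|^2 d\mu_V$ (a standard consequence of lower semi-continuity of the first variation and weak $L^2$ convergence of $\mathbf{H}^i_t$), while the linear term is continuous. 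Together these give upper semi-continuity of $\mathcal{B}(\mu^i_t,\phi)$ at a.e.\ $t$, and Fatou's lemma passes the time integral to the limit, proving the integrated Brakke inequality for $\{\mu_t\}$. The delicate point throughout is coordinating the time-independent subsequence of Step~2 with the time-dependent ones in Steps~3 and~4; this is handled by a further diagonalization over a countable dense set of times and test functions, noting that the failure set in time of varifold convergence has measure zero.
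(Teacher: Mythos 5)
Your argument correctly reproduces the standard Brakke--Ilmanen compactness proof; the paper does not re-prove this theorem but cites \cite[Theorem 7.1]{Ilmanen1}, whose proof follows exactly this scheme (mass almost-monotonicity and Helly extraction at every time, Allard compactness at a.e.\ time, and upper semi-continuity of $\mathcal{B}$ plus Fatou). The only slip is a sign in Step 2: the uniform bound $\mathcal{B}(\mu_t^i,\phi)\leq C_\phi$ makes $t\mapsto\mu_t^i(\phi)-C_\phi t$ non-increasing, not $\mu_t^i(\phi)+C_\phi t$, though this does not affect the Helly-type extraction.
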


Combining the compactness of Brakke flows with the monotonicity formula, one establishes the existence of tangent flows. For a Brakke flow $\mathcal{K}=\set{\mu_t}_{t\geq t_0}$ and a point $(\yY,s)\in\Real^{n+1}\times(t_0,\infty)$, define a new Brakke flow 
\begin{equation}
\mathcal{K}^{(\yY,s),\rho}=\set{\mu_t^{(\yY,s),\rho}}_{t\geq\rho^2(t_0-s)}
\end{equation}
where
\begin{equation}
\mu_t^{(\yY,s),\rho}=\mu_{s+\rho^{-2}t}^{\yY,\rho}.
\end{equation}
\begin{defn}
Let $\mathcal{K}=\set{\mu_t}_{t\geq t_0}$ be an integral Brakke flow with bounded area ratios. A non-trivial Brakke flow 
$\mathcal{T}=\set{\nu_t}_{t\in\Real}$ is a \emph{tangent flow} to $\mathcal{K}$ at $(\yY,s)\in\Real^{n+1}\times(t_0,\infty)$, if there is a 
sequence $\rho_i\to\infty$ so that $\mathcal{K}^{(\yY,s),\rho_i}\to\mathcal{T}$. Denote by $\mathrm{Tan}_{(\yY,s)}\mathcal{K}$ the set of 
tangent flows to $\mathcal{K}$ at $(\yY,s)$.
\end{defn}
The monotonicity formula implies that any tangent flow is backwardly self-similar.
\begin{thm}[{\cite[Lemma 8]{Ilmanen2}}]\label{BlowupsThm}
Given an integral Brakke flow $\mathcal{K}=\set{\mu_t}_{t\geq t_0}$ with bounded area ratios, a point $(\yY,s)\in\Real^{n+1}\times({t_0},\infty)$ with $\Theta_{(\yY,s)}(\mathcal{K})\geq 1$, and a sequence $\rho_i\to\infty$, there exists a subsequence $\rho_{i_j}$ and a $\mathcal{T}\in\mathrm{Tan}_{(\yY,s)}\mathcal{K}$ so that $\mathcal{K}^{(\yY,s),\rho_{i_j}}\to\mathcal{T}$.

Furthermore, $\mathcal{T}=\set{\nu_t}_{t\in\Real}$ is backwardly self-similar with respect to parabolic rescaling about $(\OO,0)$. That is, for all $t<0$ and $\rho>0$, 
\begin{equation}
\nu_t=\nu_t^{(\OO,0),\rho}. 
\end{equation}
Moreover, $V_{\nu_{-1}}$ is a stationary point of the $\mathbf{F}$ functional and 
\begin{equation}
\Theta_{(\yY,s)}(\mathcal{K})=\mathbf{F}[\nu_{-1}].
\end{equation}
\end{thm}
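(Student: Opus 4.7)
The plan is to combine the Brakke flow compactness theorem (Theorem \ref{BrakkeFlowCompactThm}) with the monotonicity formula (Proposition \ref{MonotoneProp}). First I would observe that Euclidean area ratios are invariant under the parabolic rescaling $\mathcal{K}^{(\yY,s),\rho}$; since Corollary \ref{ArearatioCor} supplies a uniform area-ratio bound for $\mathcal{K}$, the rescaled flows satisfy the uniform local mass bound required by Theorem \ref{BrakkeFlowCompactThm}. That theorem then extracts a subsequence $\rho_{i_j}$ and an integral Brakke flow $\mathcal{T}=\set{\nu_t}_{t\in\Real}$ with $\mathcal{K}^{(\yY,s),\rho_{i_j}}\to\mathcal{T}$. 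For non-triviality, a direct change of variables using the scaling of $\Phi_{(\yY,s)}$ gives
\begin{equation}
\int\Phi\, d\mu_{-1}^{(\yY,s),\rho_{i_j}}=\int\Phi_{(\yY,s)}(\xX,s-\rho_{i_j}^{-2})\, d\mu_{s-\rho_{i_j}^{-2}}(\xX)\To\Theta_{(\yY,s)}(\mathcal{K})\geq 1,
\end{equation}
and the bounded area ratios control the Gaussian tails well enough to pass to the limit, yielding $\int\Phi\, d\nu_{-1}=\Theta_{(\yY,s)}(\mathcal{K})$.

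Backward self-similarity is forced by the error term in Proposition \ref{MonotoneProp}. Applied to $\mathcal{K}$ at $(\yY,s)$, the total weighted defect
\begin{equation}
\int_{t_0}^{s}\int\Phi_{(\yY,s)}(\xX,t)\abs{\hH+\frac{S^\perp\cdot(\xX-\yY)}{2(s-t)}}^2 d\mu_t(\xX)\, dt
\end{equation}
is bounded by the total variation of the monotone quantity and is therefore finite. Parabolic rescaling by $\rho_{i_j}$ about $(\yY,s)$ and localization to any compact time window $[a,b]\subset(-\infty,0)$ then make the corresponding defect for $\mathcal{K}^{(\yY,s),\rho_{i_j}}$ tend to zero. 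Using Ilmanen's lower semi-continuity of this weighted $L^2$ quantity under Brakke convergence (see \cite[Section 7]{Ilmanen1}), I would conclude that in the limit
\begin{equation}
\hH+\frac{S^\perp\cdot\xX}{2(-t)}=0 \text{ for $\nu_t$-a.e.\ }\xX\text{ and a.e.\ } t<0.
\end{equation}
Applying Proposition \ref{MonotoneProp} now to $\mathcal{T}$ at base point $(\OO,0)$, the vanishing of the defect forces $t\mapsto\int\Phi_{(\OO,0)}(\xX,t)\, d\nu_t(\xX)$ to be constant for $t<0$, which is equivalent to the self-similarity identity $\nu_t=\nu_t^{(\OO,0),\rho}$ for all $\rho>0$ and $t<0$. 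In particular $V_{\nu_{-1}}$ satisfies \eqref{SelfshrinkerEqn} in the varifold sense, hence is a stationary point of $\mathbf{F}$, and the density identity $\Theta_{(\yY,s)}(\mathcal{K})=\mathbf{F}[\nu_{-1}]$ is just the first paragraph's conclusion specialized to $t=-1$.

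The main technical obstacle I anticipate is exactly this passage of the squared shrinker defect to the limit: Brakke convergence controls $\mu_t$ in the weak* sense for every $t$ but only gives varifold convergence for a.e.\ $t$, so the first variation and mean curvature must be shown to behave lower-semi-continuously, which is where integrality of the flow plays an essential role. Everything else is essentially bookkeeping, provided one is careful with the scaling of the backward heat kernel and the Gaussian tails in the non-triviality step.
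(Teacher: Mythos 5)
The paper does not prove this theorem; it is quoted verbatim as Ilmanen's Lemma~8 from \cite{Ilmanen2}. Your outline reconstructs the standard compactness-plus-monotonicity argument that underlies that lemma, and the overall structure is correct: parabolic rescaling preserves Euclidean area ratios, Corollary \ref{ArearatioCor} supplies the needed uniform local mass bound, Theorem \ref{BrakkeFlowCompactThm} extracts a limit, and the scaling identity for the backward heat kernel together with the Gaussian-tail control from the area-ratio bound identifies $\mathbf{F}[\nu_{-1}]$ with $\Theta_{(\yY,s)}(\mathcal{K})\geq 1$ and gives non-triviality.

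Two comments. First, you do not actually need to pass the squared shrinker defect through the Brakke limit to get constancy of the Gaussian density of $\mathcal{T}$ at $(\OO,0)$; the direct change of variables
\begin{equation*}
\int\Phi_{(\OO,0)}(\xX,t)\, d\mu_t^{(\yY,s),\rho_{i_j}}(\xX)=\int\Phi_{(\yY,s)}\bigl(\zZ,s+\rho_{i_j}^{-2}t\bigr)\, d\mu_{s+\rho_{i_j}^{-2}t}(\zZ)\To\Theta_{(\yY,s)}(\mathcal{K})
\end{equation*}
for every fixed $t<0$ already yields $\int\Phi_{(\OO,0)}(\cdot,t)\, d\nu_t\equiv\Theta_{(\yY,s)}(\mathcal{K})$, after which the monotonicity formula applied to $\mathcal{T}$ at $(\OO,0)$ gives the vanishing defect as a consequence rather than a hypothesis — this sidesteps the lower-semi-continuity issue you correctly identify as delicate. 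Second, the assertion that constancy of $t\mapsto\int\Phi_{(\OO,0)}(\cdot,t)\, d\nu_t$ is \emph{equivalent} to $\nu_t=\nu_t^{(\OO,0),\rho}$ for all $\rho>0$ and $t<0$ is too quick in the direction you actually need. Constancy plus the monotonicity formula gives that the shrinker equation holds for $\nu_t$-a.e.\ $\xX$ and a.e.\ $t<0$, but for a Brakke flow this alone does not rule out gratuitous loss of mass nor pin down the evolution uniquely, so it does not tautologically upgrade to the self-similarity identity. The remaining step is to observe that constancy forces the Brakke inequality tested against the backward heat kernel to hold with equality, which excludes sudden mass loss, and then to show the rescaled flow $\tau\mapsto\nu_{-e^{-2\tau}}^{\OO,e^{\tau}}$ is stationary in $\tau$; this is precisely the content of Ilmanen's proof and should be stated rather than asserted as an equivalence. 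The identification of $V_{\nu_{-1}}$ as a stationary point of $\mathbf{F}$ then follows from \eqref{SelfshrinkerEqn} holding in the varifold sense at $t=-1$, as you say.
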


In \cite[Theorem 6.12]{B}, Brakke established a partial regularity theorem for so-called unit density Brakke flows. Brakke's proof is very difficult, however, White \cite{WhiteReg} has given an elementary proof for a special, but large, class of Brakke flows. The interested reader may verify that, for the purposes of proving Theorem \ref{MainThm}, this class suffices. The reader may also consult the recent papers \cite{KT} and \cite{T} which give a proof using the monotonicity formula.  We will use only the following consequence of Brakke's local regularity theorem \cite[Lemma 6.11]{B}; see also \cite[Theorem 12.1]{Ilmanen1}. The proof is essentially the same as that of \cite[Lemma 2.1]{Schulze}; see also  \cite[Corollary 3.4 and Theorem 7.3]{WhiteReg}.
\begin{prop}\label{BrakkeRegProp}
Let $\mathcal{K}^i=\set{\mu_t^i}_{t\geq t_0}$ be a sequence of integral Brakke flows converging to an integral Brakke flow $\mathcal{K}=\set{\mu_t}_{t\geq t_0}$. If $\mathcal{K}\lfloor B_{R}(\yY)\times (t_1,t_2)$ is a regular flow, then 
\begin{enumerate}
\item for each $t_1<t<t_2$, $\spt(\mu_t^i)\to\spt(\mu_t)$ in $C^\infty_{loc}(B_{R}(\yY))$;
\item given $\epsilon>0$, there is an $i_0=i_0(\epsilon,\mathcal{K})$ so that if $i>i_0$, $\mathcal{K}^i\lfloor B_{R-\epsilon}(\yY)\times (t_1+\epsilon,t_2)$ is a regular flow. 
\end{enumerate}
\end{prop}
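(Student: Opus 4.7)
The plan is to deduce both conclusions from Brakke's local regularity theorem (in the form of \cite[Theorem 12.1]{Ilmanen1}, or equivalently \cite{WhiteReg}), combined with the upper semi-continuity of the Gaussian density under convergence of Brakke flows. Recall that the regularity theorem provides a universal threshold $\eta_0 > 0$ such that if a Brakke flow with bounded area ratios has Gaussian density $< 1 + \eta_0$ at every spacetime point of a parabolic cylinder, then on a slightly smaller cylinder it coincides with a smooth classical mean curvature flow admitting uniform $C^\infty$ bounds.

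The first step is to note that because $\mathcal{K}\lfloor B_R(\yY) \times (t_1, t_2)$ is a regular (hence multiplicity-one) flow, one has $\Theta_{(\xX,t)}(\mathcal{K}) = 1$ at every $(\xX, t) \in \spt(\mu_t) \cap B_R(\yY) \times (t_1, t_2)$ and $\Theta_{(\xX,t)}(\mathcal{K}) = 0$ at every other point of $B_R(\yY) \times (t_1, t_2)$. The second step is a joint upper semi-continuity statement: whenever $(\xX_i, t_i) \to (\xX, t)$, one has $\limsup_i \Theta_{(\xX_i, t_i)}(\mathcal{K}^i) \leq \Theta_{(\xX, t)}(\mathcal{K})$. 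I would prove this by applying Proposition \ref{MonotoneProp} to each $\mathcal{K}^i$ at basepoint slightly later than $t_i$, choosing an earlier evaluation time $\tau$ (less than all $t_i$) at which $V_{\mu^i_\tau} \to V_{\mu_\tau}$ (available for a.e. $\tau$), passing to the limit in $i$ using this varifold convergence and the smoothness of $\Phi$, and finally letting $\tau \uparrow t$ and invoking the monotonicity formula for $\mathcal{K}$.

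With these two ingredients fixed, I would cover the compact set $\overline{B_{R - \epsilon/2}(\yY)} \times [t_1 + \epsilon/2, t_2)$ by finitely many parabolic cylinders of small radius and conclude from the upper semi-continuity that for all sufficiently large $i$ the density of $\mathcal{K}^i$ is bounded by $1 + \eta_0$ on each such cylinder. Brakke's local regularity then implies that each $\mathcal{K}^i$, restricted to $B_{R-\epsilon}(\yY) \times (t_1 + \epsilon, t_2)$, is a classical regular flow with uniform $C^\infty$ estimates on $\spt(\mu^i_t)$ as an evolving hypersurface; this is statement (2). Standard parabolic interior estimates for mean curvature flow then upgrade the weak* convergence $\mu^i_t \to \mu_t$ to $C^\infty_{loc}$ convergence of $\spt(\mu^i_t)$ to $\spt(\mu_t)$ on each time slice in $(t_1, t_2)$, which is statement (1).

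The one delicate point, and the place I expect to spend most of the bookkeeping, is ruling out stray sheets of $\spt(\mu^i_t)$ inside $B_R(\yY)$ that do not limit to $\spt(\mu_t)$. If such a sequence $\xX_i \in \spt(\mu^i_t)$ accumulated at some $\xX^{\ast} \notin \spt(\mu_t)$, the integrality bound $\theta_{\mu^i_t}(\xX_i) \geq 1$ combined with Huisken monotonicity would force $\mu^i_t$ to place a definite amount of mass on some fixed ball around $\xX^{\ast}$, contradicting the weak* convergence $\mu^i_t \to \mu_t$. The subtlety here is that $V_{\mu^i_t} \to V_{\mu_t}$ is guaranteed only for a.e. $t$, so the argument proceeds by first establishing smooth regularity of $\mathcal{K}^i$ on a spacetime neighborhood (which does not require pointwise-in-time varifold convergence) and then reading off the slicewise convergence via the resulting uniform curvature bounds.
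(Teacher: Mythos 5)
Your strategy -- use the regularity of $\mathcal{K}$ to get Gaussian density close to $1$, propagate this bound to the $\mathcal{K}^i$ via a joint upper semi-continuity of density, cover by finitely many parabolic cylinders, and invoke White's local regularity theorem to obtain uniform curvature estimates and hence smooth slicewise convergence -- is precisely the approach in the reference the paper cites for this proposition (Schulze's Lemma 2.1, together with White's \cite[Corollary 3.4, Theorem 7.3]{WhiteReg}). The mechanism for the joint upper semi-continuity of density (apply the monotonicity formula at a slightly later basepoint, evaluate at an earlier slice $\tau$ where varifold/measure convergence holds, pass to the limit, then send $\tau\uparrow t$) and the observation that the a.e.-in-time nature of varifold convergence is harmless because density bounds only need to hold at enough earlier times are both correct.

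There is, however, one genuine slip: $\overline{B_{R-\epsilon/2}(\yY)}\times[t_1+\epsilon/2,t_2)$ is \emph{not} compact -- the time interval is half-open at $t_2$ -- so a finite cover by parabolic cylinders on which you can uniformly threshold the densities is not available without further argument. This is not merely cosmetic. With the right endpoint left open, the conclusion as literally written is in danger: if $\mathcal{K}$ develops a singularity at some $(\xX^\ast,t_2)$ with $\xX^\ast\in B_{R-\epsilon}(\yY)$ (which is permitted by the hypothesis, since the regularity assumption is only on the open time interval), one may take for $\mathcal{K}^i$ shrinking spheres that go extinct at $\xX^\ast$ at time $t_2-1/i$; these converge to $\mathcal{K}$ as Brakke flows, yet each $\mathcal{K}^i$ has a singular point inside $B_{R-\epsilon}(\yY)\times(t_1+\epsilon,t_2)$ once $1/i<t_2-t_1-\epsilon$. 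Schulze's Lemma 2.1 accordingly shrinks the time interval at \emph{both} ends, and the paper's own uses of Proposition \ref{BrakkeRegProp} (in Propositions \ref{NonTermOpenCondProp} and \ref{AsymptoticsProp}) only invoke the conclusion on compactly contained time intervals $[s-\epsilon',s+\epsilon']\subset(s-\epsilon,s+\epsilon)$. Your argument is correct once you cover the genuinely compact set $\overline{B_{R-\epsilon}(\yY)}\times[t_1+\epsilon,t_2-\epsilon]$; conclusion (1) for each fixed $t<t_2$ then follows as you describe because each such $t$ lies in one of these shrunk cylinders.
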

Here we say an integral Brakke flow $\mathcal{K}=\set{\mu_t}_{t\in I}$ is regular, if $\mathcal{K}=\set{\mu_{\Sigma_t}}_{t\in I}$ where $\set{\Sigma_t}_{t\in I}$ is a proper smooth embedded mean curvature flow.

\subsection{Level-set flow}\label{Lsf}
We will also need a set-theoretic weak mean curvature flow called the level-set flow. This flow was first studied in the context of numerical analysis by Osher-Sethian \cite{OS}. The mathematical theory was developed by Evans-Spruck \cite{ES1,ES2,ES3,ES4} and Chen-Giga-Goto \cite{CGG}. For our purposes, it has the important advantages of being uniquely defined and satisfying a nice maximum principle. 

We will follow the formulation of the level-set flow of Evans-Spruck \cite{ES1}. Let $\Gamma$ be a compact non-empty subset of $\Real^{n+1}$. Select a continuous function $u_0$ so that $\Gamma=\set{\xX: u_0(\xX)=0}$ and there are constants $C, R>0$ so that 
\begin{equation}
u_0= -C \quad\mbox{on $\set{\xX\in\Real^{n+1}: \abs{\xX}\geq R}$}
\end{equation}
for some sufficiently large $R$. In particular, $\set{u_0\geq a>-C}$ is compact. In \cite{ES1}, Evans-Spruck established the existence and uniqueness of viscosity weak solutions to the initial value problem: 
\begin{equation}\label{LevelsetEqn}
\left\{ \begin{array}{cl} 
u_t=\sum_{i,j=1}^{n+1}\left(\delta_{ij}-u_{x_i}u_{x_j}|Du|^{-2}\right)u_{x_ix_j} & \mbox{on $\Real^{n+1}\times (0,\infty)$}\\
u=u_0  & \mbox{on $\Real^{n+1}\times\{0\}$}. 
\end{array} \right.
\end{equation}
Setting $\Gamma_t=\set{\xX: u(\xX,t)=0}$, define $\mathcal{L}(\Gamma)=\set{\Gamma_t}_{t\geq 0}$ to be the \emph{level-set} flow of $\Gamma$. This is justified by \cite[Theorem 5.3]{ES1}, which shows that  $\mathcal{L}(\Gamma)$  is independent of the choice of $u_0$.

Level-set flows satisfy an avoidance principle, namely,
\begin{prop}[{\cite[Theorem 7.3]{ES1}}]\label{AvoidProp}
Let $\mathcal{L}(\Gamma)=\set{\Gamma_t}_{t\geq 0}$ and $\mathcal{L}(\Gamma^\prime)=\set{\Gamma^\prime_t}_{t\geq 0}$ be level-set flows. Assume that $\Gamma$ and $\Gamma^\prime$ are disjoint compact non-empty subsets. Then the distance between $\Gamma_t$ and $\Gamma_t^\prime$ is non-decreasing in $t$.
\end{prop}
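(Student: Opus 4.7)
The plan is to reduce the avoidance principle to three standard facts about viscosity solutions of the degenerate parabolic PDE \eqref{LevelsetEqn}: spatial translation invariance, vertical translation invariance (constants are solutions), and the Evans--Spruck viscosity comparison principle underlying \cite[Theorem 5.3]{ES1}.

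First, I would represent $\mathcal{L}(\Gamma)$ and $\mathcal{L}(\Gamma')$ using $1$-Lipschitz initial data with \emph{opposite signs}. Let $d_0 = \dist(\Gamma,\Gamma') > 0$, fix $R$ with $\Gamma \cup \Gamma' \subset B_R(\OO)$ and $C \geq R + d_0$, and set
\begin{equation*}
u_0(\xX) = \max\set{-\dist(\xX,\Gamma),\,-C}, \qquad v_0(\xX) = \min\set{\dist(\xX,\Gamma'),\,C}.
\end{equation*}
Then $\set{u_0=0} = \Gamma$, $\set{v_0=0} = \Gamma'$, each is $1$-Lipschitz and constant outside a large ball, and the triangle inequality $\dist(\xX,\Gamma) + \dist(\xX,\Gamma') \geq d_0$ yields the key pointwise bound $v_0(\xX) - u_0(\xX) \geq d_0$ on all of $\Real^{n+1}$. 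Let $u,v$ be the corresponding viscosity solutions of \eqref{LevelsetEqn}, so that $\Gamma_t = \set{u(\cdot,t)=0}$ and $\Gamma'_t = \set{v(\cdot,t)=0}$.

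Second, I would propagate the separation in time. Because constants are solutions of \eqref{LevelsetEqn}, the shifted function $u(\xX,t) + d_0$ is again a viscosity solution, and it agrees with $v$ at infinity up to the allowable ordering; since $u_0 + d_0 \leq v_0$, the comparison principle of \cite[Theorem 5.3]{ES1} gives
\begin{equation*}
v(\xX,t) \geq u(\xX,t) + d_0 \qquad \text{for all } (\xX,t) \in \Real^{n+1} \times [0,\infty).
\end{equation*}
A parallel comparison with the spatial translate $u(\xX+\omega,t)$ (another viscosity solution, by translation invariance) against $u(\xX,t) + \abs{\omega}$ shows that each $u(\cdot,t)$ inherits the $1$-Lipschitz bound of $u_0$.

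The conclusion is then immediate: for any $\yY \in \Gamma'_t$, $v(\yY,t)=0$ forces $u(\yY,t) \leq -d_0$, while any $\xX \in \Gamma_t$ has $u(\xX,t)=0$, so the preserved Lipschitz bound gives $\abs{\xX-\yY} \geq \abs{u(\xX,t) - u(\yY,t)} \geq d_0$. Infimizing over $\xX \in \Gamma_t$ and $\yY \in \Gamma_t'$ shows $\dist(\Gamma_t,\Gamma_t') \geq d_0 = \dist(\Gamma_0,\Gamma_0')$, and since one can restart the argument at any time, the distance is non-decreasing. The main obstacle is carrying out the two comparison invocations precisely in the viscosity framework: \eqref{LevelsetEqn} is singular where $Du = 0$, so one must check the continuity and far-field hypotheses of \cite[Theorem 5.3]{ES1} for each of the pairs $(u+d_0,v)$ and $(u(\cdot+\omega,\cdot),u+\abs{\omega})$. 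Once these two comparisons are established, everything else is bookkeeping with the triangle and Lipschitz inequalities.
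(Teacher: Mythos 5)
Your argument is correct and follows the same route as the proof in Evans--Spruck that the paper cites: represent both flows with signed $1$-Lipschitz initial data, use the comparison principle (applicable since constants and translates solve \eqref{LevelsetEqn}) to propagate both the separation $v - u \geq d_0$ and the $1$-Lipschitz bound on $u(\cdot, t)$, and then read off $\dist(\Gamma_t, \Gamma'_t) \geq d_0$ directly. One small bibliographic note: the viscosity comparison theorem you invoke is earlier in \cite{ES1} (their Section 3/4 comparison results) rather than Theorem~5.3, which concerns independence of the level-set flow from the choice of $u_0$ and is only needed for the restart/semigroup step at the end.
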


A technical feature of the level-set flow is that the $\Gamma_t$ of $\mathcal{L}(\Gamma)$ may develop non-empty interiors for positive times. This phenomena is called fattening and is unavoidable for certain initial sets $\Gamma$. It is closely related to non-uniqueness phenomena of weak solutions of the flow.  A level-set flow $\mathcal{L}(\Gamma)=\set{\Gamma_t}_{t\geq 0}$ is \emph{non-fattening}, if each $\Gamma_t$ has no interior.  

\subsection{Boundary motion}
In \cite{Ilmanen1}, Ilmanen synthesized both notions of weak flow. In particular, he showed that for a large class of initial sets, there is a canonical way to associate a Brakke flow to the level-set flow, and observed that this allows, among other things, for the application of Brakke's partial regularity theorem. For our purposes, it is important that the Brakke flow constructed does not vanish gratuitously. A similar synthesis may be found in \cite{ES4}.

Following \cite[Section 11]{Ilmanen1}, we introduce the following definition:
\begin{defn}
Given a compact boundary measure $\mu_0$ with interior $E_0$, a \emph{canonical boundary motion of $\mu_0$} is a pair $(E, \mathcal{K})$ consisting of an open bounded subset $E$ of $\Real^{n+1}\times\Real^{\geq 0}$ of finite perimeter and a Brakke flow $\mathcal{K}=\set{\mu_t}_{t\geq 0}$ so that: 
\begin{enumerate}
 \item $E=\set{(\xX,t): u(\xX,t)>0}$, where $u$ solves \eqref{LevelsetEqn} with $E_0=\set{\xX: u_0(\xX)>0}$ and $\partial E_0=\set{\xX: u_0(\xX)=0}$;
% \item $\mathcal{H}^{n+1}\lfloor E_0=\mathcal{H}^{n+1}\lfloor (\Real^{n+1}\times\set{0}\cap\partial^\ast E )$;
 \item each $E_t=\set{\xX: (\xX,t)\in E}$ is of finite perimeter and $\mu_t=\mathcal{H}^n\lfloor\partial^\ast E_t$.
\end{enumerate}
\end{defn}
A canonical boundary motion in our sense is automatically a boundary motion as considered in \cite{Ilmanen1}, however, the converse need not be true. Ilmanen showed that under a non-fattening condition on $\mathcal{L}(\spt(\mu_0))$, there always exists a corresponding boundary motion. In fact, his proof gives the existence of a canonical boundary motion of $\mu_0$. 
\begin{thm}[{\cite[Theorem 11.4]{Ilmanen1}}] \label{UnitdensityThm}
If $\mu_0$ is a compact boundary measure such that the level-set flow $\mathcal{L}(\spt(\mu_0))$ is non-fattening, then there is a canonical boundary motion $(E,\mathcal{K})$ of $\mu_0$.  
\end{thm}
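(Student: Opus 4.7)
The plan is to construct $\mathcal{K}$ as a subsequential limit of smooth mean curvature flows started from boundary approximations of $\mu_0$, to define $E$ directly from the level-set flow, and then to use the non-fattening hypothesis to match up the time-slices of the two objects.

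First, let $E_0$ denote the interior of $\mu_0$ and pick a continuous initial function $u_0$ with $\set{u_0>0}=E_0$, $\set{u_0=0}=\partial E_0$, and $u_0\equiv -C$ outside a large ball. Let $u$ be the viscosity solution of \eqref{LevelsetEqn} and set $E=\set{(\xX,t):u(\xX,t)>0}$ and $E_t=\set{u(\cdot,t)>0}$; this immediately gives condition (1), and non-fattening of $\mathcal{L}(\spt(\mu_0))$ forces each level set $\Gamma_t=\set{u(\cdot,t)=0}$ to have empty interior, so $\Gamma_t=\partial E_t$.

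Next, apply Lemma \ref{ApproxLem} to produce open sets $E_0^j$ with smooth embedded boundaries $\Sigma^j$ so that $\chi_{E_0^j}\to\chi_{E_0}$ in $L^1(\Real^{n+1})$ and $|D\chi_{E_0^j}|\to\mu_0$ as Radon measures. For each $j$, I would invoke Ilmanen's elliptic regularization from \cite{Ilmanen1}: approximating translators $\Sigma^j_\varepsilon\subset\Real^{n+2}$, written as graphs of $u^j_\varepsilon/\varepsilon$, produce in the $\varepsilon\to 0$ limit an integral Brakke flow $\mathcal{K}^j=\set{\mu^j_t}_{t\geq 0}$ starting from $\mu_{\Sigma^j}$, a viscosity solution $u^j$ of \eqref{LevelsetEqn} vanishing on $\Sigma^j$ at $t=0$, and open sets $E^j_t=\set{u^j(\cdot,t)>0}$ of finite perimeter with $\mu^j_t=\mathcal{H}^n\lfloor\partial^*E^j_t$. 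By Corollary \ref{ArearatioCor} the $\mathcal{K}^j$ have uniform local area bounds, so Theorem \ref{BrakkeFlowCompactThm} yields a subsequential limit $\mathcal{K}=\set{\mu_t}_{t\geq 0}$; stability of viscosity solutions under $C^0$-perturbations of initial data gives $u^j\to u$ locally uniformly, hence $\chi_{E^j_t}\to\chi_{E_t}$ in $L^1(\Real^{n+1})$ for every $t\geq 0$, so $E_t$ and $E$ are of finite perimeter.

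The main obstacle is condition (2), the identification $\mu_t=\mathcal{H}^n\lfloor\partial^*E_t$. Weak$^*$ convergence combined with the set-theoretic inclusion $\spt(\mu_t)\subset\Gamma_t=\partial E_t$ and lower semicontinuity of perimeter yield $\mathcal{H}^n\lfloor\partial^*E_t\leq\mu_t$. The reverse inequality must exclude the two sources of hidden mass: a multiplicity-two collapse along $\partial^*E_t$ would force the orientations of the approximating hypersurfaces $\partial^*E^j_t$ to cancel, contradicting the already-established $L^1$-convergence $\chi_{E^j_t}\to\chi_{E_t}$; and extra sheets supported on $\Gamma_t\setminus\partial^*E_t$ cannot appear because a pair of such sheets would bound a slab contributing interior to $\Gamma_t$, contradicting non-fattening. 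Combining these cancellation arguments with the Brakke monotonicity formula to propagate them forward in time completes the identification and hence the theorem.
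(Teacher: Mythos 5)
The paper does not supply a proof of this theorem at all: it is a citation of Ilmanen's Theorem 11.4 in \cite{Ilmanen1}, and the sentence preceding it simply records that Ilmanen's elliptic-regularization construction can be read as producing a canonical boundary motion. So there is no ``paper's own proof'' to compare against; what you have written must be judged against Ilmanen's actual argument. Your broad strategy --- elliptic regularization plus the level-set flow, with the identification $\mu_t=\mathcal{H}^n\lfloor\partial^\ast E_t$ as the crux --- is indeed the right framework and the genuinely hard part. But the two cancellation claims you offer for that crux do not hold up. The ``$L^1$-convergence excludes multiplicity-two collapse'' claim is essentially backwards: a half-space from which a slab of vanishing thickness has been removed converges in $L^1$ to the half-space, yet its perimeter measures converge to a multiplicity-three plane. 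Thus $\chi_{E^j_t}\to\chi_{E_t}$ in $L^1$ is fully compatible with $\mu_t$ having odd or even multiplicity greater than one on or off $\partial^\ast E_t$; nothing in weak-$\ast$ convergence plus lower semicontinuity of perimeter forces the reverse inequality. Likewise, the appeal to non-fattening to kill ``extra sheets on $\Gamma_t\setminus\partial^\ast E_t$'' does not work: non-fattening says $\Gamma_t$ has empty topological interior, but a pair of coincident sheets carried by $\mu_t$ contributes zero Lebesgue measure and hence no interior to $\Gamma_t$; it would still sit inside $\Gamma_t$. Ilmanen's actual unit-density/matching argument uses the specific structure of the translating graphs in $\Real^{n+2}$ (in particular the sets they bound and how those converge in spacetime), not a soft varifold-cancellation argument.

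A secondary gap: your intermediate approximation by smooth sets $E_0^j$ is both unnecessary (elliptic regularization is set up for finite-perimeter initial sets directly) and not justified as used, since Lemma~\ref{ApproxLem} gives only $L^1$ convergence of $\chi_{E_0^j}$ and weak-$\ast$ convergence of perimeters, which does not imply Hausdorff convergence of $\partial E_0^j$ to $\partial E_0$, and hence does not give you continuous level-set functions $u_0^j\to u_0$ in $C^0$; so the step ``stability of viscosity solutions gives $u^j\to u$ locally uniformly'' is unsupported. If you want to run this argument, you must either build the approximation more carefully (so that the initial sets converge in a stronger sense compatible with viscosity-solution stability) or, better, dispense with the approximation entirely and work with the regularized problem for $E_0$ itself as Ilmanen does.
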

It is relatively straightforward to see that the non-fattening condition is generic; see for instance \cite[Theorem 11.3]{Ilmanen1}.  

\section{Regularity and Asymptotic Structure for Self-shrinking Measures of Low Entropy}\label{Selfshrinker}
Let us define the set of self-shrinking measures on $\Real^{n+1}$ by
\begin{equation}
\mathcal{SM}_n=\set{\mu\in\IM_n(\Real^{n+1}): V_\mu\mbox{ is stationary for the $\mathbf{F}$ functional}, \spt(\mu)\neq\emptyset}.
\end{equation}
Denote by $\mathcal{CSM}_n$ the set of self-shrinking measures on $\Real^{n+1}$ with compact support. Further, given $\Lambda>0$, set
\begin{equation}
\mathcal{SM}_n(\Lambda)=\set{\mu\in\mathcal{SM}_n: \lambda[\mu]<\Lambda}
\end{equation}
and $\mathcal{CSM}_n(\Lambda)=\mathcal{CSM}_n\cap\mathcal{SM}_n(\Lambda)$.

Recall that an important class of self-shrinkers are the generalized cylinders
\begin{equation}
 \mathbb{S}^{n-k}\times\Real^{k}=\set{\sum_{i=k}^n x_{i+1}^2 = 2(n-k)}\subset \Real^{n+1},
\end{equation}
where $0\leq k\leq n$. As computed by Stone \cite{Stone},
\begin{equation}\label{StoneCompEqn}
 2>\mathbf{F}[\mathbb{S}^1]>\frac{3}{2}>\mathbf{F}[\mathbb{S}^2]>\cdots >\mathbf{F}[\mathbb{S}^n]>\cdots >1.
\end{equation}
By \cite[Lemma 7.10]{CM}, $\lambda_n=\lambda[\mathbb{S}^n]=\mathbf{F}[\mathbb{S}^n]$ and thus the same inequalities hold for $\lambda_n$.

\subsection{Regularity of self-shrinking measures of small entropy}
We begin by estimating the size of the singular set of self-shrinking measures with low entropy. In order to accomplish this, we will need a stratification result from  \cite{White}. 

A $\mu\in\IM_n(\Real^{n+1})$ is a \emph{cone}, if $\mu^{\OO,\rho}=\mu$. Likewise, $\mu\in\IM_n(\Real^{n+1})$ \emph{splits off a line}, if, up to an ambient rotation of $\Real^{n+1}$, $\mu=\hat{\mu}\times\mu_\Real$ for $\hat{\mu}\in\IM_{n-1}(\Real^{n})$. Observe that if $\mu\in\mathcal{SM}_n$ is a cone, then $V_\mu$ is stationary (for area). Similarly, if $\mu\in\mathcal{SM}_n$ splits off a line, then $\hat{\mu}\in\mathcal{SM}_{n-1}$ and $\lambda[\mu]=\lambda[\hat{\mu}]$.

%We make the following trivial observation about one-dimensional shrinking measures.
%\begin{lem}\label{1dimconeLem}
%If $\mu\in\mathcal{SM}_1(3/2)$ is a cone, then, up to an ambient rotation, $\mu=\mu_{\Real}$. 
%\end{lem}
%\begin{proof}
%As $\mu\in\IM_1(\Real^{2})$ is a cone, $\spt(\mu)$ is the union of rays starting from the origin. Furthermore, the assumption that $\lambda[\mu]<3/2$ implies that there are at most two rays and the stationarity of $V_\mu$ gives that the rays together form a multiplicity-one line. 
%\end{proof}

%Likewise,
We make the following observation about two-dimensional shrinking measures.
\begin{lem}\label{2dimconeLem}
If $\mu\in\mathcal{SM}_2(3/2)$ is a cone, then, up to an ambient rotation, $\mu=\mu_{\Real^2}$.
\end{lem}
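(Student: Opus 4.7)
The plan is to reduce the 2-dimensional cone classification to the 1-dimensional case in Lemma \ref{1dimconeLem} by showing, via a tangent-cone analysis, that $\spt(\mu)\setminus\{\OO\}$ is a smooth embedded hypersurface with multiplicity one, and then exploiting the cone structure to identify its link in $\mathbb{S}^2$ as a single great circle.

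First I would analyze tangent cones to $V_\mu$ at an arbitrary $\xX\in\spt(\mu)\setminus\{\OO\}$. As noted just before the lemma, $V_\mu$ is stationary for area, so the monotonicity formula for stationary integral varifolds together with the area-ratio bound from $\lambda[\mu]<3/2$ yields, via Allard's compactness theorem, a tangent cone $\nu$ to $V_\mu$ at $\xX$ which is itself a stationary integral cone. Since $\mu$ is a cone centered at $\OO$, the ray $\{t\xX: t>0\}$ lies in $\spt(\mu)$, and a direct computation shows $t\xX\in\spt(\mu^{\xX,\rho})$ for every $t\in\Real$ and every $\rho>|t|/|\xX|$; hence the entire line through $\OO$ in the direction of $\xX$ lies in $\spt(\nu)$. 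A stationary cone containing a line through its vertex splits off that line, so up to an ambient rotation $\nu=\hat{\nu}\times\mu_\Real$ with $\hat{\nu}$ a stationary integral $1$-cone in $\Real^2$. Since entropy is invariant under splitting off a line, $\lambda[\hat{\nu}]=\lambda[\nu]\leq\lambda[\mu]<3/2$, so $\hat{\nu}\in\mathcal{SM}_1(3/2)$, and Lemma \ref{1dimconeLem} gives $\hat{\nu}=\mu_\Real$. Thus every tangent cone to $V_\mu$ at $\xX$ is a multiplicity-one plane, $\mu_{\Real^2}$.

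Second, Allard's regularity theorem then guarantees that $\spt(\mu)$ is a smooth embedded hypersurface of multiplicity one in a neighborhood of each $\xX\in\spt(\mu)\setminus\{\OO\}$. Consequently, $L=\spt(\mu)\cap\mathbb{S}^2$ is a compact embedded smooth $1$-submanifold of $\mathbb{S}^2$, and the stationarity of $V_\mu$ in $\Real^3$ is equivalent to the stationarity of $L$ in $\mathbb{S}^2$, so each component of $L$ is a simple closed geodesic, i.e., a great circle. Any two distinct great circles in $\mathbb{S}^2$ meet in a pair of antipodal points, contradicting the embeddedness of $L$; hence $L$ is a single great circle and $\spt(\mu)$ is the $2$-plane through $\OO$ containing $L$. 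The multiplicity-one conclusion from the tangent-cone analysis then gives $\mu=\mu_{\Real^2}$ up to an ambient rotation.

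The main technical point is ensuring that Allard's regularity theorem is applicable, which requires having both the multiplicity-one tangent-plane conclusion \emph{and} the uniform area-ratio bound coming from the entropy hypothesis; this is exactly where the bound $\lambda[\mu]<3/2$ is essential, because it both produces the small density needed to invoke Lemma \ref{1dimconeLem} on the $1$-dimensional slice and bounds the local area ratios uniformly on sequences $\mu^{\xX,\rho}$. Everything else is routine.
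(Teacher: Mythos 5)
Your proof follows essentially the same route as the paper's: blow up at a point of $\spt(\mu)\setminus\{\OO\}$, use the cone structure of $\mu$ to reduce to a $1$-dimensional stationary cone, invoke Lemma \ref{1dimconeLem}, apply Allard's regularity theorem to conclude $\sing(\mu)\subset\{\OO\}$, and identify the link as a great circle. The only substantive difference is cosmetic: you take the tangent cone at an arbitrary $\xX\in\spt(\mu)\setminus\{\OO\}$ and conclude regularity everywhere directly, whereas the paper takes it at an arbitrary $\yY\in\sing(\mu)\setminus\{\OO\}$ and derives a contradiction; these are logically the same.

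There is, however, one genuinely incorrect intermediate claim you should fix. You assert ``a stationary cone containing a line through its vertex splits off that line.'' This is false as a general statement: the Simons cone in $\Real^8$ contains many lines through the vertex (e.g.\ $\Real\cdot(1,0,0,0,1,0,0,0)$) yet splits off none of them, because the density of a stationary cone at a regular multiplicity-one point on such a line is $1$, strictly below the density at the vertex, and the rigidity in the monotonicity formula therefore does not apply. The correct reason $\nu$ splits off a line here is that $\mu$ itself is a cone about $\OO$, so the density of $\mu$ is constant along the ray through $\xX$; passing to the blow-up, the density of $\nu$ along the line in direction $\xX$ is constant and equal to $\theta_\nu(\OO)$, and it is this \emph{density} equality (not mere support containment) that forces $\nu$ to be a cylinder by the equality case of the monotonicity formula. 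This is exactly what the paper's terse ``$\mu$ is a cone and so $\nu$ splits off a line'' is pointing at. (Separately, your auxiliary computation should read $\rho\geq |t|$ rather than $\rho>|t|/|\xX|$, but this is immaterial.) Once this step is rephrased, your argument is sound and coincides with the paper's.
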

\begin{proof}
Since $V_\mu$ is stationary and $\mu\in\IM_2(\Real^3)$ with $\lambda[\mu]<3/2$, we may apply Allard's integral compactness theorem (see \cite[Theorem 42.7 and Remark 42.8]{Simon}) to conclude that given $\yY\in\sing(\mu)\setminus\set{\OO}$, there exists a sequence $\rho_i\to\infty$ so that $\mu^{\yY,\rho_i}\to\nu$ and $V_\nu$ is a stationary integral varifold. Moreover, it follows from the monotonicity formula \cite[Theorem 17.6]{Simon} that $\nu$ is a cone; see also \cite[Theorem 19.3]{Simon}.

In addition, $\mu$ is a cone and so $\nu$ splits off a line. That is, up to an ambient rotation, $\nu=\hat{\nu}\times\mu_\Real$ and $V_{\hat{\nu}}$ is a one-dimensional integral stationary cone. Thus, $\spt(\hat{\nu})$ is the union of rays starting from the origin. Moreover, by the lower semi-continuity of entropy, $\lambda[\hat{\nu}]=\lambda[\hat{\nu}\times\mu_\Real]\leq\lambda[\mu]<3/2$. This implies that there are at most two rays and the stationarity of $V_{\hat{\nu}}$ gives that the rays together form a multiplicity-one line. Hence, $\nu$ is a multiplicity-one plane. Therefore, by Allard's regularity theorem (see \cite[Theorem 24.2]{Simon}), $\sing(\mu)\subset\set{\OO}$.  Hence, as $V_\mu$ is a stationary cone, the link of $\spt(\mu)$ is smooth closed geodesic in $\mathbb{S}^2$, i.e., a great circle. Therefore, $\mu$ must be a multiplicity-one plane.
\end{proof}

We may now use a dimension reduction argument to bound the size of the singular set of a self-shrinking measure with entropy less than $3/2$.
\begin{prop}\label{SizesingularProp}
The singular set of any self-shrinking measure in $\mathcal{SM}_n(3/2)$ has Hausdorff dimension at most $n-3$.\footnote{Though it is not needed in this paper, we observe that by the recent resolution of the Willmore conjecture by Marques-Neves \cite{MN}, the Hausdorff dimension estimate in this proposition can be improved to $n-4$.}
\end{prop}
\begin{proof}
Given $\mu\in\mathcal{SM}_n(3/2)$, the mean curvature of $V_\mu$ is locally bounded by \eqref{SelfshrinkerEqn}. Following the same reasoning in the proof of Lemma \ref{2dimconeLem}, given $\yY\in\sing (\mu)$, there exists a sequence $\rho_i\to\infty$ so that $\mu^{\yY,\rho_i}\to\nu$ and $V_\nu$ is an integral stationary cone. By the lower semi-continuity of entropy, $\lambda[\nu]\leq\lambda[\mu]<3/2$. Hence, together with Lemma \ref{2dimconeLem}, it follows from general dimension reduction arguments (see for instance \cite[Theorem 4]{White}) that the Hausdorff dimension of $\sing(\mu)$ is at most $n-3$.
\end{proof}

%We observe that the recent resolution of the Willmore conjecture by Marques-Neves \cite{MN} implies stronger regularity. We will not need this result and so omit the proof.
%\begin{lem}\label{3dimconeLem}
%If $\mu\in\mathcal{SM}_3(3/2)$ is a cone, then $\mu=\mu_{\Real^4}$. Hence the singular set of any self-shrinking measure in $\mathcal{SM}_n(3/2)$ has Hausdorff dimension at most $n-4$.
%\end{lem}

An important consequence of Proposition \ref{SizesingularProp} is that self-shrinking measures with compact support and of small entropy are compact boundary measures.    
\begin{prop}\label{OrientabilityLowEnt} 
If $\mu\in\mathcal{CSM}_n(3/2)$, then $\mu$ is a compact boundary measure.  
\end{prop}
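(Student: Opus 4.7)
The plan is to combine the entropy bound with Proposition \ref{SizesingularProp} to construct a bounded open set $E$ whose topological boundary is $\spt(\mu)$ and whose reduced boundary carries the measure $\mu$.

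First I would show that $\theta_\mu\equiv 1$ $\mu$-almost everywhere. At $\mu$-a.e.\ $\yY$, the approximate tangent is of the form $T_{\yY}\mu=\theta_\mu(\yY)\mu_P$ for some hyperplane $P\ni\OO$, and since entropy is scale-invariant and lower semicontinuous along $\mu^{\yY,\rho}\to\theta_\mu(\yY)\mu_P$,
\[
\theta_\mu(\yY)=\lambda[\theta_\mu(\yY)\mu_P]\leq\lambda[\mu]<\tfrac{3}{2}.
\]
Integrality forces $\theta_\mu(\yY)=1$, so $\mu=\mathcal{H}^n\lfloor R$ with $R:=\reg(\spt(\mu))$ a smooth embedded hypersurface. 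By Proposition \ref{SizesingularProp}, the singular locus $S:=\sing(\spt(\mu))$ has Hausdorff dimension at most $n-3$, hence $\mathcal{H}^{n-1}(S)=0$ and $S$ has codimension at least four in $\Real^{n+1}$.

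Next I would construct $E$ as the union of the bounded connected components of the open set $\Real^{n+1}\setminus\spt(\mu)$; this is bounded since $\spt(\mu)$ is compact. By the local multiplicity-one structure, $\spt(\mu)$ separates a small ball about each $\yY\in R$ into exactly two pieces, so $R\subset\partial E$ and $E$ is nonempty. The crucial point is to show that these two local sides are globally distinct, i.e., that $R$ is two-sided in $\Real^{n+1}\setminus S$. The codimension-$\geq 4$ bound on $S$ allows one to perturb any loop in $\Real^{n+1}$ off $S$ and into transverse position with $R$; the resulting $\mathbb{Z}/2$-intersection number is then a homotopy invariant in $\Real^{n+1}\setminus S$, which rules out any identification of the two local sides ``around'' the singular set. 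This yields $\partial E=\spt(\mu)$.

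Finally, I would verify that $E$ has locally finite perimeter with $\mu=\mathcal{H}^n\lfloor\partial^\ast E$. Since $R$ is locally smooth at each regular point and $\mathcal{H}^{n-1}(S)=0$, the singular part contributes nothing to $|D\chi_E|$, so $E$ has locally finite perimeter. De Giorgi's theorem then identifies $\partial^\ast E$ with $R$ up to $\mathcal{H}^n$-null sets, and the multiplicity-one conclusion gives $\mu=\mathcal{H}^n\lfloor R=\mathcal{H}^n\lfloor\partial^\ast E$, as required.

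The main technical obstacle is the global two-sidedness of $R$ in the second step. Local two-sidedness is immediate from multiplicity one, but the possibility of connecting the two local sides through paths skirting the singular set must be excluded; this is where the codimension-$\geq 3$ estimate from Proposition \ref{SizesingularProp} is used in an essential way.
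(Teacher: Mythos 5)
Your proposal follows the same route as the paper: both hinge on a Samelson-type mod-$2$ intersection argument, enabled by the fact that Proposition \ref{SizesingularProp} makes $\sing(\spt(\mu))$ have Hausdorff dimension at most $n-3$, so that loops and the $2$-disks they bound can be pushed off the singular set and put in general position with the regular part. The paper phrases this as proving orientability of $\reg(\mu)$ and cites \cite{Samelson}; you unwind the intersection-number mechanism explicitly. So the idea is right.

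There is, however, a genuine slip in your construction of $E$. Taking $E$ to be the union of \emph{all} bounded components of $\Real^{n+1}\setminus\spt(\mu)$ does give $\partial E=\spt(\mu)$ topologically, but it need not satisfy $\mu=\mathcal{H}^n\lfloor\partial^\ast E$: in a nested configuration, a regular point $\yY\in R$ can have \emph{both} of its local sides lying in bounded components, hence both in $E$. Then $E$ has Lebesgue density $1$ at $\yY$, so $\yY\notin\partial^\ast E$, and $\mathcal{H}^n\lfloor\partial^\ast E$ would miss a whole neighborhood of $\yY$ in $R$. You have not ruled out nested components, and there is no reason the argument should need to. The fix is already contained in the intersection theory you set up: define the mod-$2$ winding function $w:\Real^{n+1}\setminus\spt(\mu)\to\mathbb{Z}/2$, where $w(\zZ)$ is the parity of the intersection number of a generic path from $\zZ$ to infinity, chosen to avoid $\sing(\spt(\mu))$ and transverse to $R$. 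Your argument shows $w$ is well-defined (homotopy invariance, using codimension $\geq 3$ to push $2$-dimensional homotopies off the singular set) and that $w$ flips value across $R$. Take $E=\set{w=1}$. Then $E$ is open, bounded (since $w\equiv 0$ on the unbounded component), $\partial E=\spt(\mu)$, and since $E$ lies locally on exactly one side of $R$ at every regular point, $\partial^\ast E=R$ up to an $\mathcal{H}^n$-null set. Federer's criterion (using $\mathcal{H}^n(\spt(\mu))=\mu(\Real^{n+1})<\infty$) gives locally finite perimeter, and $\mu=\mathcal{H}^n\lfloor\partial^\ast E$ follows. With that change the argument is complete.
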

\begin{proof}
It suffices to show that the regular part $\reg(\mu)$ is orientable. Let $\gamma$ be any closed simple curve in $\Real^{n+1}$. Then $\gamma$ bounds a topological disk $D$ with $\partial D=\gamma$. Since $\mathcal{H}^{n-1}(\sing(\mu))=0$ by Proposition \ref{SizesingularProp}, one can arrange $\gamma$ and $D$ so that the closure of $D$ does not intersect $\sing(\mu)$. Thus, the orientability of $\reg(\mu)$ follows from the same arguments as in \cite{Samelson}.
\end{proof}

\subsection{Non-collapsed self-shrinking measures and flows}
We now describe the asymptotic structure of self-shrinking measures in $\mathcal{SM}_n(3/2)$. We first note for $n=2$, stratification alone gives strong control.

For $\mu\in \mathcal{SM}_n$, we define the \emph{associated Brakke flow} $\mathcal{K}=\set{\mu_t}_{t\in\Real}$ by
\begin{equation}
\mu_t= \left \{ \begin{array}{cc} 
                               0 & t\geq 0 \\
                               \mu^{\OO, \sqrt{-t}} & t<0.
                       \end{array} \right. 
\end{equation}
We prove a splitting lemma for tangent flows to self-shrinking measures at time $0$.
\begin{lem}\label{stratificationLem}
For $\mu\in\mathcal{SM}_n$ with $\lambda[\mu]<\infty$, let $\mathcal{K}$ be the associated Brakke flow to $\mu$. If $\yY\neq\OO$ with $\Theta_{(\yY,0)} (\mathcal{K})\geq 1$, and $\mathcal{T}\in\mathrm{Tan}_{(\yY, 0)} \mathcal{K}$, then $\mathcal{T}$ splits off a line, that is $\mathcal{T}=\set{\nu_t}_{t\in\Real}$, where, up to an ambient rotation, $\nu_t=\hat{\nu}_t\times\mu_\Real$ and $\set{\hat{\nu}_t}_{t\in\Real}$ is the Brakke flow associated to $\hat{\nu}_{-1}\in\mathcal{SM}_{n-1}$.
\end{lem}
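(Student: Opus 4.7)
My approach is a standard parabolic stratification argument. Since $\mathcal{K}$ is backwardly self-similar about $(\OO,0)$ and $\yY\neq\OO$ lies off the shrinking point, I expect the blowup at $(\yY,0)$ to inherit a translation symmetry in the $\yY$ direction; the splitting will then follow.

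The starting point is the identity
\[
\mu_t^{(\yY,0),\rho}(\Omega)=\mu_t(\Omega+\rho\yY), \qquad t<0,\ \rho>0,
\]
which I would verify by a direct computation unwinding $\mu_t=\mu^{\OO,\sqrt{-t}}$ and the definition of the rescaled measure. Applying this along the sequence $\rho_i$ then gives, for every $s\in\Real$,
\[
\mu_t^{(\yY,0),\rho_i}(\Omega+s\yY)=\mu_t^{(\yY,0),\rho_i+s}(\Omega).
\]
Writing $\rho_i+s=\rho_i\sigma_i$ with $\sigma_i=1+s/\rho_i\to 1$ and using the semigroup identity $\mu^{(\yY,0),\rho_i\sigma_i}=(\mu^{(\yY,0),\rho_i})^{(\OO,0),\sigma_i}$, continuity of the spatial rescaling operation together with the backward self-similarity of $\mathcal{T}$ from Theorem \ref{BlowupsThm} yields $\mu_t^{(\yY,0),\rho_i+s}\to\nu_t$. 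Passing to the limit in the displayed identity then gives $\nu_t(\Omega+s\yY)=\nu_t(\Omega)$ for every $s\in\Real$; that is, $\nu_t$ is translation invariant in the $\yY$ direction.

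Given translation invariance, choose an ambient rotation sending $\yY/|\yY|$ to the $(n+1)$st coordinate direction. Then $\nu_{-1}=\hat{\nu}_{-1}\times\mu_\Real$ for some $\hat{\nu}_{-1}\in\IM_{n-1}(\Real^n)$. Since $V_{\nu_{-1}}$ is stationary for $\mathbf{F}$ on $\Real^{n+1}$ by Theorem \ref{BlowupsThm}, the product structure forces $V_{\hat{\nu}_{-1}}$ to be stationary for $\mathbf{F}$ on $\Real^n$, so $\hat{\nu}_{-1}\in\mathcal{SM}_{n-1}$. Backward self-similarity of $\mathcal{T}$ and the scale invariance of $\mu_\Real$ then give, for $t<0$,
\[
\nu_t=\nu_{-1}^{\OO,\sqrt{-t}}=\hat{\nu}_{-1}^{\OO,\sqrt{-t}}\times\mu_\Real=\hat{\nu}_t\times\mu_\Real,
\]
while both sides vanish for $t\geq 0$.

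The main technical point is confirming the convergence $(\mu^{(\yY,0),\rho_i})^{(\OO,0),\sigma_i}\to\mathcal{T}$ as $\sigma_i\to 1$. Fortunately, since $\mathcal{T}$ is backwardly self-similar, the time slice $\nu_t$ is literally a fixed Radon measure rescaled parabolically and therefore depends continuously on $t$ for $t<0$; combined with the continuity of the spatial rescaling operation on Radon measures, this follows routinely. Everything else is bookkeeping with the self-similar structure.
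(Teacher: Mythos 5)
Your key identity $\mu_t^{(\yY,0),\rho}(\Omega)=\mu_t(\Omega+\rho\yY)$ for $t<0$ is correct, and the overall strategy --- exploit self-similarity of $\mathcal{K}$ about $(\OO,0)$ to force translation invariance of the blow-up at $(\yY,0)$ --- is the same one the paper (very tersely) invokes. But there is a genuine gap at the central step: the assertion that $\mu_t^{(\yY,0),\rho_i+s}\to\nu_t$.

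To see the problem, note that your identity says, as measures,
\[
\mu_t^{(\yY,0),\rho_i+s}(\Omega)=\mu_t^{(\yY,0),\rho_i}(\Omega+s\yY),
\]
so weak* continuity of translation by the fixed vector $s\yY$ gives
\[
\mu_t^{(\yY,0),\rho_i+s}\longrightarrow \nu_t(\,\cdot\,+s\yY),
\]
i.e.\ the limit is the \emph{translate} of $\nu_t$, not $\nu_t$ itself. Equating this with $\nu_t$ is precisely the translation invariance you are trying to prove, so the argument is circular. The ``diagonal continuity'' you invoke --- $\sigma_i\to 1$ together with $\mathcal{K}^{(\yY,0),\rho_i}\to\mathcal{T}$ implies $(\mathcal{K}^{(\yY,0),\rho_i})^{(\OO,0),\sigma_i}\to\mathcal{T}$ --- fails here: the parabolic rescaling $(\cdot)^{(\OO,0),\sigma_i}$ shifts the time slice by $|\sigma_i^{-2}t-t|\sim 1/\rho_i$, while the $t$-derivative of $\mu_t^{(\yY,0),\rho_i}(\phi)$ grows like $\rho_i$ (because the test function is being slid a distance $\sim\rho_i$ along $\spt\mu$). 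The two effects are exactly of the same order, and the net discrepancy is the translation by $s\yY$ --- which is why it does not vanish.

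The missing ingredient is that the limit $\{\nu_t(\,\cdot\,+s\yY)\}_{t\in\Real}$ is \emph{itself} a tangent flow to $\mathcal{K}$ at $(\yY,0)$, being the limit of the blow-up sequence $\rho_i+s\to\infty$. Hence by Theorem \ref{BlowupsThm} it is also backwardly self-similar about $(\OO,0)$. Writing out the self-similarity of both $\nu_t$ and $\nu_t(\,\cdot\,+s\yY)$, one finds $\nu_t(\,\cdot\,+s\yY)=\nu_t(\,\cdot\,+\rho s\yY)$ for all $\rho>0$, so $\nu_t$ is invariant under translation by $(1-\rho)s\yY$ for every $\rho>0$; since the stabilizer under translation is a closed subgroup of $\Real$, invariance under all of $\Real\yY$ follows. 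Once this is repaired, your remaining steps (rotating $\yY$ to $\eE_{n+1}$, writing $\nu_{-1}=\hat{\nu}_{-1}\times\mu_\Real$, checking $\hat{\nu}_{-1}\in\mathcal{SM}_{n-1}$ and deriving $\nu_t=\hat{\nu}_t\times\mu_\Real$ from self-similarity and scale invariance of $\mu_\Real$) are all correct.
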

\begin{proof}
Given $\yY\neq\OO$ with $\Theta_{(\yY,0)}(\mathcal{K})\geq 1$, if $\mathcal{T}\in\mathrm{Tan}_{(\yY,0)}\mathcal{K}$, there exists a sequence $\rho_i\to\infty$ such that $\mathcal{K}^{(\yY,0),\rho_i}\to\mathcal{T}$. Thus, it follows from the self-similarity of $\mathcal{K}$ that $\mathcal{T}$ is translation invariant along the direction of $\yY$.  Indeed, for any $\tau\in \Real$, 
\begin{align*}
\mathcal{T}^{(\tau \yY,0),1} &= \lim_{i\to \infty} \mathcal{K}^{((1+\rho^{-1}_i\tau)\yY, 0), \rho_i}=\lim_{i\to \infty}  \left(\mathcal{K}^{(\OO, 0), (1+\rho^{-1}_i\tau)^{-1}}\right)^{(\yY, 0), \rho_i(1+\rho^{-1}_i\tau)}\\
&= \lim_{\rho_i\to \infty} \mathcal{K}^{(\yY, 0), \rho_i(1+\rho_i^{-1}\tau)}
\end{align*}
where we used that $\mathcal{K}$ was self-similar about $(\OO, 0)$ for the last equality. As 
$$
\lim_{i \to \infty} \frac{\rho_i(1+\rho^{-1}_i\tau)}{\rho_i}=1,
$$
we conclude that $\mathcal{T}^{(\tau \yY,0),1}=\mathcal{T}$, that is, the flow splits off a line in the direction of $\yY$.
\end{proof}

We now use this lemma together with the fact that the only one-dimensional self-shrinkers are the circle and straight lines, to conclude that any self-shrinking measure on $\Real^3$ with small entropy and non-compact support must be asymptotic to a regular cone.  The main idea is that the associated Brakke flow to the self-shrinking measure encodes at time $0$ its asympotic behavior. The self-similarity of the flow and the non-compact support of the measure imply this is a non-empty cone. Lemma \ref{stratificationLem}, the classification of the one-dimensional self-shrinkers and Brakke's regularity theorem ensure it is a regular cone.
\begin{prop}\label{AsymptoticsProp}
If  $\mu\in\mathcal{SM}_2(3/2)$ has non-compact support, then $\mu=\mu_\Sigma$, where $\Sigma$ is a smooth self-shrinking surface asymptotic at infinity to a regular cone in the strong blow-down sense. In particular, $\Sigma$ has quadratic curvature decay, i.e., for $\xX\in\Sigma$ outside some compact set,
\begin{equation}
\abs{A_\Sigma}(\xX)\leq\frac{C_0}{\abs{\xX}}
\end{equation}
for some positive constant $C_0$.  
\end{prop}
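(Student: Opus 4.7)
The plan has four ingredients: a dimension bound on the singular set forces smoothness, self-similar blow-down produces a stationary cone, Lemma \ref{2dimconeLem} identifies this cone as a multiplicity-one plane, and Allard's regularity upgrades to smooth convergence from which the curvature decay is immediate.

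First, I would apply Proposition \ref{SizesingularProp} with $n=2$: the singular set of $\mu$ has Hausdorff dimension at most $-1$ and is therefore empty. Consequently $\mu=\mu_\Sigma$ for a smooth, properly embedded, multiplicity-one surface $\Sigma$, which by \eqref{SelfshrinkerEqn} is self-shrinking.

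Next, I would consider blow-downs $\rho_i\Sigma=\spt(\mu^{\OO,\rho_i})$ for $\rho_i\to 0^+$. The entropy bound $\lambda[\mu]<3/2$ gives uniform bounds on the area ratios of $\mu^{\OO,\rho_i}$ on every ball, so Allard's compactness yields a subsequence converging as integer rectifiable varifolds to some $V_\nu$. The shrinker equation rescales to $\mathbf{H}_{\rho\Sigma}(\yY)=-\rho^{-2}\yY^\perp/2$ on $\rho\Sigma$; arguing via Huisken's monotonicity applied to the associated Brakke flow $\mathcal{K}=\set{\mu_t}$ at $(\OO,0)$ --- equivalently, by passing to the limit in the first variation of $\mathbf{F}$ on the rescalings $\mu^{\OO,\rho_i}$ --- one checks that $V_\nu$ is a stationary integer rectifiable cone with vertex at the origin. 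Lower semi-continuity of entropy gives $\lambda[\nu]\leq\lambda[\mu]<3/2$, so $\nu\in\mathcal{SM}_2(3/2)$ and Lemma \ref{2dimconeLem} forces $\nu=\mu_P$ for a multiplicity-one $2$-plane $P$ through the origin.

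Since the limit is a unit-density plane, Allard's regularity theorem promotes varifold convergence to $C^\infty_{loc}$ convergence on $\Real^3\setminus\set{\OO}$. Applying this on the annulus $\set{1/2<|\yY|<2}$ yields a uniform bound $|A_{\rho_i\Sigma}|(\yY)\leq C_0$, which by the scaling $|A_{\rho\Sigma}|(\rho\xX)=\rho^{-1}|A_\Sigma|(\xX)$ gives $|A_\Sigma|(\xX)\leq C_0/|\xX|$ for $|\xX|$ sufficiently large --- the claimed quadratic curvature decay. To strengthen subsequential smooth convergence to the strong blow-down statement (independence of the limiting plane $P$ from $\rho_i$, and smooth convergence on every annulus), I would invoke the uniqueness of asymptotic cones for smooth self-shrinkers due to the second author. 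The main obstacle is verifying that the varifold limit of $\rho_i\Sigma$ is a \emph{stationary} cone: because $|\mathbf{H}_{\rho\Sigma}|$ blows up pointwise like $\rho^{-1}$, pointwise passage to the limit in the mean-curvature term is impossible, and one must instead use an integral identity --- the Huisken/Brakke monotonicity on $\mathcal{K}$, or the $\mathbf{F}$-stationarity of $\mu$ read as a first-variation identity preserved under rescaling --- to force $\yY^\perp=0$ on the limit support. Uniqueness of the asymptotic cone is a separate substantial input.
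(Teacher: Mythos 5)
Your plan is mathematically sound, but it takes a genuinely different route from the paper. You perform a \emph{spatial} blow-down $\rho_i\Sigma$ with $\rho_i\to 0^+$, show that (subsequential) limits are integral stationary cones, classify the cone via Lemma \ref{2dimconeLem}, then invoke Allard's regularity and a separate uniqueness-of-asymptotic-cone theorem. The paper instead works \emph{parabolically}: it considers the associated self-similar Brakke flow $\mathcal{K}=\set{\mu_{\sqrt{-t}\,\Sigma}}_{t<0}$, defines $X=\set{\yY\neq\OO:\Theta_{(\yY,0)}(\mathcal{K})\geq 1}$ (which is already a cone by self-similarity, with no sequence chosen), and for each $\yY\in X$ applies Lemma \ref{stratificationLem} to see that every $\mathcal{T}\in\mathrm{Tan}_{(\yY,0)}\mathcal{K}$ splits off a line; combined with Proposition \ref{SizesingularProp} and the Abresch--Langer classification of one-dimensional shrinkers, $\mathcal{T}$ is a static multiplicity-one plane, and Brakke's local regularity (Proposition \ref{BrakkeRegProp}) gives $\sqrt{-t}\,\Sigma\to X$ in $C^\infty_{loc}(\Real^3\setminus\set{\OO})$ as $t\to 0^-$. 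What the parabolic route buys, and what your route has to pay for separately, is precisely the two issues you flag at the end. First, your blow-down argument has to produce a \emph{stationary} limit cone: since $V_\mu$ is only $\mathbf{F}$-stationary, $\delta V_{\mu^{\OO,\rho}}(X)=\rho^{-2}\int\frac{\yY^\perp}{2}\cdot X\,d\mu^{\OO,\rho}$ and one must quantitatively show the surface becomes conical as $|\yY|\to\infty$, which requires a weighted monotonicity argument that is not supplied by Huisken's monotonicity centered at $(\OO,0)$ (that is degenerate here because the flow is self-similar about that point). The paper avoids this entirely, because Theorem \ref{BlowupsThm} already produces self-similar tangent flows, and Lemma \ref{stratificationLem} extracts the line directly. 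Second, the paper gets uniqueness of the blow-down cone for free since $X$ is defined intrinsically rather than as a subsequential limit; you must import an external uniqueness theorem, which is a much heavier input than the proposition needs. Two small slips to fix: in the last paragraph you write that $|\mathbf{H}_{\rho\Sigma}|$ blows up like $\rho^{-1}$ when your own displayed formula gives $\rho^{-2}$; and you should note, as the paper does, that properness of $\Sigma$ follows from the quadratic area bound via Cheng--Zhou before applying Allard compactness to the blow-downs.
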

\begin{proof}
First it follows from Proposition \ref{SizesingularProp} that $\mu=\mu_\Sigma$ for a non-compact self-shrinker $\Sigma$. Moreover, the entropy bound gives that $\Sigma$ has at most quadratic area growth and thus, by \cite[Theorem 1.3]{CZProper}, $\Sigma$ is proper.

Let $\mathcal{K}=\set{\mu_t}_{t\in\Real}$ be the Brakke flow associated to $\mu$. Note that $\mu_t=\mu_{\sqrt{-t}\, \Sigma}$ for $t<0$. Let $X=\set{\yY: \yY\neq\OO, \Theta_{(\yY,0)}(\mathcal{K})\geq 1}\subset \Real^{3}\setminus\set{\OO}$. As $\Sigma$ is non-compact, $X$ is non-empty. Indeed, pick any sequence of points $\yY_i\in\Sigma$ with $|\yY_i|\to\infty$. The points $\hat{\yY}_i=|\yY_i|^{-1}\yY_i\in |\yY_i|^{-1}\Sigma$. Hence, $\Theta_{(\hat{\yY}_i, -|\yY_i|^{-2})}(\mathcal{K})\geq 1$. As the $\hat{\yY}_i$ are in a compact subset, up to passing to a subsequence and relabeling, $\hat{\yY}_i\to\hat{\yY}$, and so the upper semi-continuity of Gaussian density, Corollary \ref{UpperctsCor}, implies that $\Theta_{(\hat{\yY},0)}(\mathcal{K})\geq 1$.  

We next show that $X$ is a smooth properly embedded cone in $\Real^{3}\backslash \set{\OO}$. Given $\yY\in X$ and $\rho>0$, invoking the self-similarity of $\mathcal{K}$ about $(\OO,0)$, 
\begin{align*}
\Theta_{(\rho\yY,0)}(\mathcal{K}) &=\lim_{s\to 0-} \frac{-1}{4\pi s}\int e^{\frac{|\xX-\rho\yY|^2}{4s}}\, d\mu_s(\xX)=\lim_{s\to 0-} \frac{-\rho^2}{4\pi s} \int e^{\frac{\rho^2|\xX-\yY|^2}{4s}}\, d\mu_s^{\OO,\rho^{-1}}(\xX) \\
&=\lim_{s\to 0-} \frac{-1}{4\pi s\rho^{-2}} \int e^{\frac{|\xX-\yY|^2}{4s\rho^{-2}}} \, d\mu_{s\rho^{-2}}(\xX)=\Theta_{(\yY,0)}(\mathcal{K})\geq 1.
\end{align*}
In particular, $\rho\yY\in X$. Thus $X$ is invariant under dilation and so is a cone. To see that $X$ is regular, we note that by Lemma \ref{stratificationLem}, for any $\yY\in X$ and $\mathcal{T}\in\mathrm{Tan}_{(\yY,0)}\mathcal{K}$, $\mathcal{T}=\set{\nu_t}_{t\in\Real}$ splits off a line. That is, up to an ambient rotation, $\nu_{t}=\hat{\nu}_t\times\mu_\Real$ with $\set{\hat{\nu}_t}_{t\in\Real}$ the Brakke flow associated to $\hat{\nu}_{-1}\in\mathcal{SM}_1(3/2)$.  Here we use the lower semi-continuity of entropy. By Proposition \ref{SizesingularProp}, $\hat{\nu}_{-1}=\mu_\gamma$ for $\gamma$ a one-dimensional complete self-shrinker. Thus, by the classification theorem in \cite{AL} and inequality \eqref{StoneCompEqn}, $\hat{\nu}_{-1}$ is a multiplicity-one line and so $\nu_{-1}$ is a multiplicity-one plane and $\mathcal{T}$ is a static multiplicity-one plane. Hence, it follows from Proposition \ref{BrakkeRegProp} that for all $t<0$ close to $0$, $\spt(\mu_t)$ near $\yY$ is given by the connected graph of a smooth function over the same plane with uniformly bounded derivatives. Therefore, combining this with the upper semi-continuity of Gaussian density, we conclude that $\sqrt{-t}\, \Sigma\to X$ in $C^\infty_{loc}\left(\Real^3\backslash \set{\OO}\right)$, as $t\to 0-$.  
\end{proof}

To understand the situation for $n>2$, we will need to introduce a much weaker, but still useful, notion. 
\begin{defn}\label{Nonterminaldefn}
A $\mu\in\mathcal{SM}_n$ is \emph{non-collapsed} if there is a $\yY\in\Real^{n+1}$ and an $R>4\sqrt{n}$ so that:
\begin{enumerate}
 \item \label{NonTermTechDefn} $\sing (\mu)\cap B_R(\yY)=\emptyset$;
 \item $\spt (\mu)$ separates $B_R(\yY)\subset\Real^{n+1}$ into two components $\Omega_{+}$, $\Omega_{-}$ containing, respectively, closed balls $\bar{B}_{2\sqrt{n}}(\xX_{+})$, $\bar{B}_{2\sqrt{n}}(\xX_{-})$.
\end{enumerate}
The measure $\mu$ is \emph{strongly non-collapsed} if $\mu\times \mu_{\Real^k}$ is non-collapsed for all $k\geq 0$. 
\end{defn}
Note that Condition \eqref{NonTermTechDefn} is a technical condition that is included to simplify some proofs. Observe that the definition of non-collapsed depends on the dimension $n$ in a way that ensures that if $\mu\times \mu_\Real$ is non-collapsed,  then so is $\mu$, but the converse need not hold. Thus, $\mu$ is strongly non-collapsed if and only if $\mu\times\mu_\Real$ is strongly non-collapsed. Clearly, being non-collapsed is weaker than being strongly non-collapsed which in turn is weaker than being smoothly asymptotic to a cone. Hence, Proposition \ref{AsymptoticsProp} gives that
\begin{cor}\label{NonampleCor}
If $\mu\in\mathcal{SM}_2(3/2)$ and $\spt(\mu)$ is non-compact, then $\mu$ is strongly non-collapsed.
\end{cor}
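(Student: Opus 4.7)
The plan is to use Proposition \ref{AsymptoticsProp} to reduce everything to an explicit quantitative statement about smooth convergence. By that proposition, $\mu = \mu_\Sigma$ for a smooth self-shrinker $\Sigma$, so $\sing(\mu) = \emptyset$ and condition (1) of Definition \ref{Nonterminaldefn} holds trivially for any choice of $\yY$ and $R$. The task is therefore to produce a single large ball $B_R(\yY)$ cleanly separated by $\Sigma$ into two pieces each containing a closed ball of radius $2\sqrt{n} = 2\sqrt{2}$.

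First I would identify the asymptotic cone as a plane. By Proposition \ref{AsymptoticsProp}, $\Sigma$ is asymptotic to a regular cone $C \subset \Real^3$ in the strong blow-down sense. Because $\xX^\perp = 0$ on any cone, the self-shrinker equation on $C$ reduces to $\hH = \OO$, so $C$ is a stationary integral $2$-cone with $\lambda[C] \le \lambda[\Sigma] < 3/2$ by lower semi-continuity of entropy; Lemma \ref{2dimconeLem} then forces $C$ to be a multiplicity-one plane $P$ through the origin. After an ambient rotation, take $P = \set{x_3 = 0\,}$ with unit normal $\eE_3$.

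Next I would exploit the resulting smooth convergence $r^{-1}\Sigma \to P$ in $C^\infty_{\mathrm{loc}}(\Real^3 \setminus \set{\OO})$ as $r \to \infty$. Fix $p_0 = \eE_1 \in P$; for $r$ sufficiently large, $r^{-1}\Sigma \cap B_{1/2}(p_0)$ is a smooth normal graph over $P$ with arbitrarily small $C^1$ norm, and therefore separates $B_{1/2}(p_0)$ into two components, each containing the closed ball of radius $1/3$ centered at $p_0 \pm \tfrac{1}{3}\eE_3$. Rescaling by $r$ translates this into: $\Sigma \cap B_{r/2}(rp_0)$ separates $B_{r/2}(rp_0)$ into two components containing the closed balls of radius $r/3$ centered at $rp_0 \pm \tfrac{r}{3}\eE_3$. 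Choosing any $r > 10\sqrt{2}$ yields $R := r/2 > 4\sqrt{2}$ and ball radius $r/3 > 2\sqrt{2}$, so $\yY := rp_0$ together with $R$ verifies Definition \ref{Nonterminaldefn}.

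I expect the main (and only conceptual) obstacle to be Step 1, namely pinning down that the regular asymptotic cone supplied by Proposition \ref{AsymptoticsProp} is in fact a plane rather than some other smooth cone; but the combination of lower semi-continuity of entropy with Lemma \ref{2dimconeLem} handles it immediately. Everything else is a quantitative reading of the asymptotic structure already packaged in Proposition \ref{AsymptoticsProp}.
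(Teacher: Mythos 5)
Your overall plan matches the paper's: invoke Proposition~\ref{AsymptoticsProp} to get a smooth self-shrinker asymptotic to a regular cone, then choose a ball far from the origin where the surface is nearly flat at the required scale. However, your Step~1 contains a genuine error. You assert that ``the self-shrinker equation on $C$ reduces to $\hH = \OO$,'' but the asymptotic cone $C$ is \emph{not} a self-shrinker --- it is only the blow-down limit of one --- so the self-shrinker equation is not an equation satisfied by $C$. If $r^{-1}\Sigma \to C$ smoothly and $\yY_r \in r^{-1}\Sigma$ with $\yY_r \to \yY \in C$, then the shrinker equation for $\Sigma$ gives $H_{r^{-1}\Sigma}(\yY_r) = \tfrac{1}{2}r^{2}\langle \yY_r, \nN_r\rangle$; since $\langle \yY_r, \nN_r\rangle \to \langle\yY,\nN_C(\yY)\rangle = 0$, this is a $\tfrac{0\cdot\infty}{}$ indeterminate form, and $H_C(\yY)$ can be (and in concrete examples of self-shrinkers with conical ends genuinely is) nonzero. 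Thus $C$ need not be stationary, and Lemma~\ref{2dimconeLem} does not apply; the conclusion that $C$ is a plane is unjustified.

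Fortunately the mistake is local and the rest of your argument can be repaired without the planarity claim. Since $C$ is a smooth cone away from the origin, its second fundamental form at $p_0$ is finite, so for some small $\delta > 0$ the piece $C \cap B_\delta(p_0)$ is a $C^1$-small graph over the tangent plane $T_{p_0}C$ and cleanly separates $B_\delta(p_0)$ into two components, each containing a closed ball of radius $\delta/3$ centered at $p_0 \pm \tfrac{\delta}{3}\nN_C(p_0)$. The smooth blow-down convergence supplied by Proposition~\ref{AsymptoticsProp} then makes $r^{-1}\Sigma \cap B_\delta(p_0)$ a $C^1$-small graph over $C$ for $r$ large, so it separates $B_\delta(p_0)$ the same way. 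Rescaling by $r$ and taking $r > 12\sqrt{2}/\delta$ gives $R := \delta r > 4\sqrt{2}$ with the two separated balls of radius $\delta r/3 > 2\sqrt{2}$, and regularity of $\Sigma$ handles condition~(1) of Definition~\ref{Nonterminaldefn}. This is what the paper's one-line proof (``choose $|\yY|$ sufficiently large'') implicitly relies on --- equivalently, one can use the quadratic curvature decay $|A_\Sigma| \le C_0/|\xX|$ from Proposition~\ref{AsymptoticsProp} directly.
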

%\begin{proof}
%This follows from Proposition \ref{AsymptoticsProp} by choosing points $\yY_k$ with $|\yY_k|$ sufficiently large. 
%\end{proof}
Heuristically, a comparison with shrinking spheres implies that if $\mu\in\mathcal{SM}_n$ is non-collapsed, then the Brakke flow associated 
to $\mu$ becomes extinct at time $0$ only due to sudden vanishing. However, some care is needed. For instance,  while the shrinking spoon 
(see \cite[Figure 9]{B} or \cite[Figure 6]{Ilmanen1}) is non-collapsed, any Brakke flow starting from it must become extinct by time $0$. 
Conversely, a multiplicity-two plane is collapsed, but the associated static flow never vanishes.  Nevertheless,  all self-shrinking 
measures which are compact boundary measures are collapsed. 
\begin{lem}\label{NoCpctAmpleShrinkerLem} 
If $\mu\in\mathcal{SM}_n$ is a compact boundary measure, then $\mu$ is collapsed.
\end{lem}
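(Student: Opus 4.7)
Assume for contradiction that $\mu\in\mathcal{SM}_n$ is a compact boundary measure with interior $E$ which is non-collapsed, and fix the data $\yY$, $R>4\sqrt{n}$, $\Omega_\pm$, $\bar B_{2\sqrt{n}}(\xX_\pm)\subset\Omega_\pm$ from Definition \ref{Nonterminaldefn}. Because $B_R(\yY)\setminus\spt(\mu)$ has two components, $\spt(\mu)\cap B_R(\yY)\neq\emptyset$; then for any $\xX\in\partial E\cap B_R(\yY)$, the definition of topological boundary gives that $B_R(\yY)$ meets both $E$ and $\Real^{n+1}\setminus\bar E$. The decomposition
\[
B_R(\yY)\setminus\spt(\mu)=\bigl(B_R(\yY)\cap E\bigr)\sqcup\bigl(B_R(\yY)\setminus\bar E\bigr)
\]
exhibits a disjoint union of two non-empty open sets with two components in total, so each summand is connected. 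After relabeling, $\Omega_+=B_R(\yY)\cap E$ and in particular $\bar B_{2\sqrt{n}}(\xX_+)\subset E$.

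The strategy is to compare the self-similar shrinking of $\mu$ with the classical mean curvature flow of the sphere $\partial B_{2\sqrt{n}}(\xX_+)$. The shifted associated Brakke flow $\tilde\mu_t:=\mu^{\OO,\sqrt{1-t}}$, $t\in[0,1)$, starts at $\mu$ and becomes extinct at $t=1$; its support $\sqrt{1-t}\spt(\mu)=\partial(\sqrt{1-t}E)$ encloses the bounded region $\sqrt{1-t}E$ whose diameter is $\sqrt{1-t}\,\diam(E)\to 0$ as $t\to 1^-$. The sphere, in turn, evolves classically as $S_t=\partial B_{\sqrt{4n-2nt}}(\xX_+)$ for $t\in[0,2)$, with radius at least $\sqrt{2n}$ throughout $[0,1]$. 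Since $\spt(\mu)$ and $\partial B_{2\sqrt{n}}(\xX_+)$ are disjoint compact sets, Proposition \ref{AvoidProp} applied to their level set flows $\{\Gamma_t\}$ and $\{S_t\}$ gives $\dist(\Gamma_t,S_t)\geq d_0:=\dist(\spt(\mu),\partial B_{2\sqrt{n}}(\xX_+))>0$ for all $t$. A continuity argument on the level-set defining function $u(\cdot,t)$ of $\spt(\mu)$ (positive on $S_0\subset E$ and unable to vanish on $S_t$ without violating the separation) then forces $S_t\subset\{u(\cdot,t)>0\}$ for all $t$ on which $S_t$ exists.

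Identifying $\{u(\cdot,t)>0\}$ with (a subset of) the self-similarly shrinking region $\sqrt{1-t}\bar E$, we conclude $S_t\subset\sqrt{1-t}\bar E$ for $t\in[0,1)$, whence
\[
2\sqrt{4n-2nt}=\diam(S_t)\leq\sqrt{1-t}\,\diam(E).
\]
Letting $t\to 1^-$ gives $2\sqrt{2n}\leq 0$, the desired contradiction. The main technical point is the identification of $\{u(\cdot,t)>0\}$ with the self-similar interior $\sqrt{1-t}\bar E$ when $\spt(\mu)$ is singular: for smooth $\spt(\mu)$ the level set flow coincides with the classical MCF $\sqrt{1-t}\spt(\mu)$ and the inclusion is immediate, while in general one may approximate $E$ by smooth domains via Lemma \ref{ApproxLem} and pass to the limit, invoke Ilmanen's canonical boundary motion (Theorem \ref{UnitdensityThm}) after a perturbation avoiding fattening, or appeal to a direct barrier comparison with the self-similar Brakke flow of $\mu$. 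In each case the essential geometric content is the same: a self-shrinker must shrink strictly faster than any sphere of radius $2\sqrt{n}$ it encloses.
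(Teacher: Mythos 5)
Your high-level strategy — trap a shrinking sphere inside the self-similar shrinking of $E$ and derive a contradiction by an avoidance principle — is the same as the paper's, and your diameter endgame is a perfectly good alternative to the paper's ray argument (it even avoids the paper's need to stipulate $\xX_\pm\neq\OO$). However, the middle step has a genuine gap. You route the comparison through the level-set flow $u$ of $\spt(\mu)$ and invoke Proposition~\ref{AvoidProp}, but then you need the identification of $\{u(\cdot,t)>0\}$ with (a subset of) $\sqrt{1-t}\,\bar E$. This is exactly what is \emph{not} known here: for a singular self-shrinker the associated self-similar family $\{\sqrt{1-t}\,\partial E\}$ is only a Brakke flow, and Brakke flows are in general strictly smaller than the level-set flow, whose zero set may fatten and whose positive region need not be contained in $\sqrt{1-t}\,\bar E$. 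You acknowledge this as the ``main technical point'' and float three possible fixes, but none is carried out: the smooth approximations produced by Lemma~\ref{ApproxLem} are not self-shrinkers, so their level-set flows are not self-similar and the diameter estimate does not pass to the limit; invoking Theorem~\ref{UnitdensityThm} after a non-fattening perturbation likewise destroys self-similarity; and the ``barrier comparison with the self-similar Brakke flow'' is not an argument but a pointer to the argument one should actually give.

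That last pointer is precisely the paper's route, and it closes the gap cleanly: rather than comparing level-set flows, compare the associated Brakke flow $\mathcal{K}$ of $\mu$ directly against the classical shrinking-sphere solution $\mathcal{S}=\{\partial B_{\sqrt{2n(1-t)}}(\xX_+)\}_{t\geq -1}$ using the avoidance principle for Brakke flows against smooth compact flows, \cite[Theorem~10.6]{Ilmanen1}. Since $\spt(\mu_{-1})\cap B_{2\sqrt{n}}(\xX_+)=\emptyset$, one gets $\spt(\mu_t)\cap B_{\sqrt{2n(1-t)}}(\xX_+)=\emptyset$ for all $t>-1$ with no reference to $u$ at all, and from there either your diameter argument or the paper's ray argument finishes. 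I would rewrite the core of the proof to use that Brakke-flow avoidance statement, and then your diameter contradiction is a nice, slightly more symmetric alternative to the ray argument.
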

\begin{proof}
Let $E$ be the interior of $\mu$. If $\mu$ is non-collapsed, there is a point $\yY\in \Real^{n+1}$ and a radius $R>0$ so that $B_R(\yY)\cap \spt(\mu)$ separates $B_R(\yY)$ into two components $\Omega_-$ and $\Omega_+$ containing closed balls $\bar{B}_{2\sqrt{n}}(\mathbf{x}_{-})$ and $\bar{B}_{2\sqrt{n}}(\mathbf{x}_{+})$ respectively. Clearly, we may assume that neither $\xX_+$ nor $\xX_-$ are the origin and, up to relabeling, that $\Omega_+\subset E$. Consider $\mathcal{K}=\set{\mu_t}_{t\in\Real}$ the Brakke flow associated to $\mu$ and
\begin{equation}
\mathcal{S}=\set{\partial B_{\sqrt{2n(1-t)}}(\mathbf{x}_{+})}_{t\geq -1}
 \end{equation}
the self-shrinking spheres starting from $\partial B_{2\sqrt{n}}(\mathbf{x}_{+})$. Since $\spt(\mu_{-1})\cap B_{2\sqrt{n}}(\mathbf{x}_{+})=\emptyset$, then by the fact that the support of a Brakke flow satisfies an avoidance principle \cite[Theorem 10.6]{Ilmanen1}, $\spt(\mu_{t})\cap B_{\sqrt{2n(1-t)}}(\mathbf{x}_{+})=\emptyset$ for all $t>-1$. This leads to a contradiction.
 
Indeed, consider the ray connecting $\OO$ to $\mathbf{x}_+$. Since $\partial E=\spt (\mu)$ is bounded, there must be a point $\mathbf{x}\in\spt(\mu)$ on this ray that is further away from $\OO$ than $\mathbf{x}_+$. Hence, there is a value $\tau\in (-1,0)$ so that $\sqrt{-\tau}\, \mathbf{x}=\mathbf{x}_+$ and so $\mathbf{x}_+\in\spt(\mu_{\tau})\cap B_{\sqrt{2n(1-\tau)}}(\mathbf{x}_{+})$, yielding the claimed contradiction.
\end{proof}

Motivated by the above observation, we make the following more general definition.
\begin{defn}\label{NoncollapseDefn}
An integral Brakke flow $\mathcal{K}=\set{\mu_t}_{t\geq t_0}$ is \emph{non-collapsed at time $\tau$}, if there is a $(\yY,s)\in\Real^{n+1}\times (t_0,\tau)$, an $R>4\sqrt{n(\tau-t)}$ and an $0<\epsilon<\min\set{\tau-s, s-t_0}$ so that:
\begin{enumerate}
 %\item \label{NonTermTechDefn2} For $|s-t|<\epsilon$, $\spt (\mu_s)\cap B_R(\yY)\subset \reg(\mu_s)$;
 \item $\mathcal{K}\lfloor B_R(\yY)\times (s-\epsilon,s+\epsilon)$ is regular;
 \item $\spt (\mu_s)$ separates $B_R(\yY)\subset\Real^{n+1}$ into two components $\Omega_{+}$, $\Omega_{-}$ containing, respectively, closed balls $\bar{B}_{2\sqrt{n(\tau-s)}}(\xX_{+})$, $\bar{B}_{2\sqrt{n(\tau-s)}}(\xX_{-})$.
\end{enumerate}
The Brakke flow $\mathcal{K}$ is \emph{strongly non-collapsed at time $\tau$}, if $\set{\mu_t\times \mu_{\Real^k}}_{t\geq t_0}$ is non-collapsed at time $\tau$ for all $k\geq 0$.
\end{defn}
Clearly, $\mu\in\mathcal{SM}_n$ is (strongly) non-collapsed if and only if the associated Brakke flow is (strongly) non-collapsed at time $0$.  Crucially, being non-collapsed at a given time is an open condition for Brakke flows.
\begin{prop}\label{NonTermOpenCondProp}
Let $\mathcal{K}=\set{\mu_t}_{t\geq t_0}$ be an integral Brakke flow with bounded area ratios. If $\mathcal{K}^i=\set{\mu_t}_{t\geq t_0}$ are integral Brakke flows converging to $\mathcal{K}$ which are collapsed at time $\tau_i>t_0$ and $\tau_i\to\tau>t_0$, then $\mathcal{K}$ is collapsed at time $\tau$.
\end{prop}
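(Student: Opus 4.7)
The plan is to prove the contrapositive: assuming $\mathcal{K}$ is non-collapsed at $\tau$, I will show that $\mathcal{K}^i$ is non-collapsed at $\tau_i$ for all sufficiently large $i$, contradicting the hypothesis. So fix witnesses $(\yY, s, R, \epsilon, \xX_+, \xX_-)$ for the non-collapsedness of $\mathcal{K}$ at $\tau$, as in Definition \ref{NoncollapseDefn}. The strategy is to recycle this data for $\mathcal{K}^i$, with small perturbations of $R$ and $\epsilon$ to accommodate the convergence $\mathcal{K}^i \to \mathcal{K}$ and $\tau_i \to \tau$.

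First I would choose $\delta>0$ small enough that $R - 2\delta > 4\sqrt{n(\tau-s)}$ and the closed balls $\bar{B}_{2\sqrt{n(\tau-s)}+\delta}(\xX_\pm)$ remain inside $\Omega_\pm \cap B_{R-2\delta}(\yY)$. Setting $R' = R - \delta$, the convergences $\tau_i \to \tau$ and $t_0^i \to t_0$ give, for all large $i$, that $t_0^i < s < \tau_i$ and $4\sqrt{n(\tau_i-s)} < R'$; moreover the balls $\bar{B}_{2\sqrt{n(\tau_i-s)}}(\xX_\pm)$ are contained in $\bar{B}_{2\sqrt{n(\tau-s)}+\delta}(\xX_\pm)$ and hence lie inside $B_{R'}(\yY)$ at positive distance from $\spt(\mu_s)$.

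Next I would apply Proposition \ref{BrakkeRegProp}. Part (2) applied with the regular set $B_R(\yY) \times (s-\epsilon, s+\epsilon)$ and parameter $\delta$ yields, for all large $i$, that $\mathcal{K}^i \lfloor B_{R'}(\yY) \times (s-\epsilon+\delta, s+\epsilon)$ is a regular flow; picking any $\epsilon_i \in (0, \min\{\tau_i-s,\, s-t_0^i,\, \epsilon-\delta\})$ then yields condition (1) in Definition \ref{NoncollapseDefn}. Part (1) of the same proposition gives $\spt(\mu_s^i) \to \spt(\mu_s)$ in $C^\infty_{loc}(B_R(\yY))$, so on $\bar{B}_{R'}(\yY)$ the surface $\spt(\mu_s^i)$ is a $C^\infty$-small graphical perturbation of the smooth embedded hypersurface $\spt(\mu_s) \cap \bar{B}_{R'}(\yY)$. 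Standard tubular-neighborhood reasoning then shows the perturbed hypersurface separates $B_{R'}(\yY)$ and preserves the two-sided structure near $\spt(\mu_s)$; by the distance estimate above, the balls $\bar{B}_{2\sqrt{n(\tau_i-s)}}(\xX_\pm)$ are disjoint from $\spt(\mu_s^i)$ and lie in opposite components of $B_{R'}(\yY) \setminus \spt(\mu_s^i)$. This verifies condition (2) with the same choices of $\yY$ and $\xX_\pm$, giving the desired contradiction.

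The main obstacle is justifying the preservation of the separation structure in Step 3. One subtlety is that restricting $\Omega_\pm$ to the smaller ball $B_{R'}(\yY)$ could a priori disconnect them; however, since each closed ball $\bar{B}_{2\sqrt{n(\tau-s)}}(\xX_\pm)$ is itself connected and fits inside $B_{R'}(\yY)$, it sits inside a single component of $B_{R'}(\yY)\cap \Omega_\pm$, and these two components are distinct (as their union lies in the disjoint $\Omega_\pm$). Under a $C^\infty$-small graphical perturbation of the smooth embedded separating hypersurface $\spt(\mu_s)\cap B_{R'}(\yY)$, this local two-sided structure persists — concretely, one may lift a straight path connecting a point in $\bar{B}_{2\sqrt{n(\tau_i-s)}}(\xX_+)$ to one in $\bar{B}_{2\sqrt{n(\tau_i-s)}}(\xX_-)$ and observe that its (odd, by perturbation) count of transverse intersections with $\spt(\mu_s^i)$ forces the balls into distinct components of $B_{R'}(\yY)\setminus \spt(\mu_s^i)$.
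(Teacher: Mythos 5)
Your proof is correct and takes essentially the same route as the paper's: argue by contradiction, slightly shrink the witnesses $R$ and $\epsilon$, invoke Proposition \ref{BrakkeRegProp} for regularity of the $\mathcal{K}^i$ and $C^\infty_{loc}$ convergence of supports, and observe that the separation structure and the containment of the balls $\bar{B}_{2\sqrt{n(\tau_i-s)}}(\xX_\pm)$ persist for large $i$. Your treatment is slightly more explicit about the topological stability of the separation and about adjusting the ball radii directly to $\tau_i$ (where the paper instead first establishes non-collapsedness at $\tau$ and then notes the ball sizes can be nudged), but the substance is the same.
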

\begin{proof}
We argue by contradiction. Suppose that $\mathcal{K}$ is non-collapsed at time $\tau$. Let $(\yY,s)\in\Real^{n+1}\times (t_0,\tau)$, $R>0$ and $\epsilon>0$ be the relevant quantities from Definition \ref{Nonterminaldefn}. It's clear from the definition that we can slightly shrink $R$ and $\epsilon$ without affecting anything. Let us denote the shrunk constants by $R'$ and $\epsilon'$. By (1) in Definition \ref{NoncollapseDefn}, $\mathcal{K}\lfloor B_{R}(\yY)\times (s-\epsilon,s+\epsilon)$ is regular. Thus, by Proposition \ref{BrakkeRegProp}, for $i$ sufficiently large, $\mathcal{K}^i\lfloor B_{R'}(\yY)\times [s-\epsilon',s+\epsilon']$ is regular. Moreover, for $s\in [s-\epsilon',s+\epsilon']$, $\spt(\mu_s^i)\to\spt(\mu_s)$ in $C^\infty_{loc}(B_{R'}(\yY))$. Therefore, for $i$ sufficiently large, $\spt(\mu_s^i)$ separates $B_{R'}(\yY)$ into two components $\Omega_+^i$, $\Omega_-^i$ containing, respectively, closed balls $\bar{B}_{2\sqrt{n(\tau-s)}}(\xX_{+})$, $\bar{B}_{2\sqrt{n(\tau-s)}}(\xX_{-})$. That is, for large $i$, $\mathcal{K}^i$ are non-collapsed at time $\tau$. The size of the closed balls can always be slightly increased or decreased, and so for large $i$, the $\mathcal{K}^i$ are also non-collapsed at times near $\tau$, providing the claimed contradiction.
\end{proof}
\begin{cor}\label{TanConeNonCollapseCor}
Let $\mathcal{K}=\set{\mu_t}_{t\geq t_0}$ be an integral Brakke flow with bounded area ratios. If there is a $(\yY,\tau)\in\Real^{n+1}\times(t_0,\infty)$ so that a $\mathcal{T}\in\mathrm{Tan}_{(\yY, \tau)}\mathcal{K}$ is (stongly) non-collapsed at time $0$, then $\mathcal{K}$ is (strongly) non-collapsed at time $\tau$. 
\end{cor}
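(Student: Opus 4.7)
The plan is to argue by contrapositive: assume $\mathcal{K}$ is collapsed at time $\tau$ and deduce that every $\mathcal{T}\in\mathrm{Tan}_{(\yY,\tau)}\mathcal{K}$ must be collapsed at time $0$, which is the contrapositive of the corollary.

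The key observation is that being non-collapsed (and hence collapsed) at a specified time transforms covariantly under the parabolic rescalings that define tangent flows. Concretely, I will verify that for any $\rho>0$, the flow $\mathcal{K}$ is non-collapsed at time $\tau$ if and only if $\mathcal{K}^{(\yY,\tau),\rho}$ is non-collapsed at time $0$. The correspondence sends data $(\yY_0,s,R,\epsilon,\xX_\pm)$ from Definition \ref{NoncollapseDefn} witnessing non-collapsedness of $\mathcal{K}$ at $\tau$ to its parabolic rescaling
\begin{equation*}
\bigl(\rho(\yY_0-\yY),\,\rho^2(s-\tau),\,\rho R,\,\rho^2\epsilon,\,\rho(\xX_\pm-\yY)\bigr).
\end{equation*}
One checks that both the radius bound $R>4\sqrt{n(\tau-s)}$ and the ball radii $2\sqrt{n(\tau-s)}$ scale compatibly, and that regularity of the parabolic cylinder $B_R(\yY_0)\times(s-\epsilon,s+\epsilon)$ as well as the topological separation condition for $\spt(\mu_s)$ are manifestly preserved by a rigid dilation-translation. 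The inverse correspondence is identical, so the equivalence, and hence its negation, follows.

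With this scaling invariance in hand, choose $\rho_i\to\infty$ with $\mathcal{K}^{(\yY,\tau),\rho_i}\to\mathcal{T}$; the limit $\mathcal{T}$ is an integral Brakke flow with bounded area ratios by Theorem \ref{BlowupsThm} together with Corollary \ref{ArearatioCor}. Under the contrapositive assumption each $\mathcal{K}^{(\yY,\tau),\rho_i}$ is collapsed at time $0$, so Proposition \ref{NonTermOpenCondProp}, applied with the constant sequence $\tau_i\equiv 0\to 0$, forces $\mathcal{T}$ to be collapsed at time $0$, contradicting the hypothesis. I do not expect any serious obstacle here: the only substantive step is unpacking the parabolic scaling of Definition \ref{NoncollapseDefn}, after which Proposition \ref{NonTermOpenCondProp} does all the work.
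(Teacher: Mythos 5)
Your proof is correct and uses the same two ingredients as the paper's proof --- the parabolic scaling covariance of Definition \ref{NoncollapseDefn} and Proposition \ref{NonTermOpenCondProp} --- merely reorganized as a contrapositive. Arguing via the contrapositive has the minor advantage of invoking Proposition \ref{NonTermOpenCondProp} exactly as stated (all $\mathcal{K}^{(\yY,\tau),\rho_i}$ collapsed at $0$ forces $\mathcal{T}$ collapsed at $0$), whereas the paper implicitly applies its contrapositive to pass from non-collapsedness of $\mathcal{T}$ to non-collapsedness of some $\mathcal{K}^{(\yY,\tau),\rho_i}$.
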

\begin{proof} 
We first show that if $\mathcal{T}$ is non-collapsed at time $0$, then $\mathcal{K}$ is  non-collapsed at time $\tau$.  Indeed, Proposition \ref{NonTermOpenCondProp} implies that there is a sequence $\rho_i\to\infty$ so that for $i$ sufficiently large, $\mathcal{K}^{(\yY,\tau),\rho_i}$ is non-collapsed at time $0$. Hence, the spatial and temporal translation properties of the definition imply that $\mathcal{K}$ is non-collapsed at time $\tau$.

If $\mathcal{T}=\set{\nu_t}_{t\in \Real}$ is strongly non-collapsed at time $0$, then $\mathcal{T}^k= \set{\mu_t\times \mu_{\Real^k}}_{t\in \Real}$ is non-collapsed at time $0$. Moreover, if $\mathcal{K}^k=\set{ \mu_t \times \mu_{\Real^k}}_{t\geq t_0}$, then $\mathcal{T}^k\in\mathrm{Tan}_{((\yY_0,\OO),\tau)} \mathcal{K}^k$. Hence, by what we just showed, $\mathcal{K}^k$ is non-collapsed at time $\tau$.  As $k$ is arbitrary we conclude that $\mathcal{K}$ is strongly non-collapsed at time $\tau$.  
\end{proof}

We conclude this section with the following general structural result:
\begin{prop}\label{GeneralAsympProp}
For $n\geq 2$, if $\mu\in\mathcal{SM}_n(\lambda_{n})$, then one of the following holds:
 \begin{enumerate}
 \item \label{GAP1}$\mu\in\mathcal{CSM}_n(\lambda_{n}$);
 \item \label{GAP2} $\mu$ is strongly non-collapsed; 
 \item \label{GAP3} there is a $\nu\in\mathcal{SM}_n(\lambda_{n})$ with $\lambda[\nu]\leq\lambda[\mu]$ so that $\nu=\hat{\nu}\times\mu_{\Real^{n-k}}$ for some $\hat{\nu}\in\mathcal{CSM}_{k}(\lambda_{n})$ and $1\leq k\leq n-1$.
 \end{enumerate}
\end{prop}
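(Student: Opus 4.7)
The plan is to proceed by induction on $n\geq 2$. The base case $n=2$ is immediate: Corollary \ref{NonampleCor} supplies alternative (2) for every non-compact $\mu\in\mathcal{SM}_2(\lambda_2)$, while alternative (3) is vacuous since the bound $\lambda_1>\lambda_2$ from \eqref{StoneCompEqn} excludes any element from $\mathcal{CSM}_1(\lambda_2)$. For the inductive step I would fix $\mu\in\mathcal{SM}_n(\lambda_n)$ outside alternatives (1) and (2), so that $\mu$ has non-compact support and is collapsed, and produce the splitting of alternative (3).

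To extract a $\mu_\Real$ factor, I would mimic the opening of the proof of Proposition \ref{AsymptoticsProp}: choose $\yY_i\in\spt(\mu)$ with $|\yY_i|\to\infty$, normalize $\hat\yY_i=\yY_i/|\yY_i|$, pass to a subsequential limit $\hat\yY\in\mathbb{S}^n$, and conclude from the upper semi-continuity of Gaussian density (Corollary \ref{UpperctsCor}) that $\Theta_{(\hat\yY,0)}(\mathcal{K})\geq 1$ for the associated Brakke flow $\mathcal{K}$ of $\mu$. Theorem \ref{BlowupsThm} and Lemma \ref{stratificationLem} then furnish a tangent flow $\mathcal{T}=\set{\hat\nu_t\times\mu_\Real}_{t\in\Real}$ at $(\hat\yY,0)$ with $\hat\nu_{-1}\in\mathcal{SM}_{n-1}$, and Huisken's monotonicity combined with the lower semi-continuity of entropy yields $\lambda[\hat\nu_{-1}]\leq\lambda[\mu]<\lambda_n<\lambda_{n-1}$. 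Applying the inductive hypothesis to $\hat\nu_{-1}$ produces three subcases: if $\hat\nu_{-1}\in\mathcal{CSM}_{n-1}(\lambda_{n-1})$, I would set $\nu:=\hat\nu_{-1}\times\mu_\Real$ to obtain (3) for $\mu$ with $k=n-1$; if $\hat\nu_{-1}=\check\nu\times\mu_{\Real^{(n-1)-k'}}$ with $\check\nu\in\mathcal{CSM}_{k'}(\lambda_{n-1})$, I would similarly lift by an extra $\mu_\Real$ factor to get (3) with $k=k'$; and if $\hat\nu_{-1}$ is non-collapsed then Lemma \ref{NoCpctAmpleShrinkerLem} forces it to be non-compact, and I would iterate the whole tangent-flow procedure on $\hat\nu_{-1}$ itself, producing a chain of cores whose ambient dimensions strictly decrease.

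This iteration either terminates in one of the first two subcases at some intermediate dimension (in which case stacking the accumulated $\mu_\Real$ factors yields (3) for $\mu$), or descends all the way to dimension $1$; in the latter situation Abresch-Langer's classification \cite{AL} together with $\lambda_1>\lambda_n$ forces the terminal core to be the multiplicity-one line $\mu_\Real$. The hard part will be ruling out this terminal scenario. The plan is to back-propagate planarity up the chain: the tangent flow producing $\mu_\Real$ is a multiplicity-one plane, so Proposition \ref{BrakkeRegProp} makes the penultimate core smooth and asymptotically planar in that direction, and the rigidity of self-shrinkers with planar tangent cones at infinity (Proposition \ref{AsymptoticsProp} at dimension two, with analogous statements via stratification at higher dimensions) forces that core to itself be a plane. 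Iterating up, each core is planar, so $\mathcal{T}$ itself is a multiplicity-one plane. Brakke regularity then shows $\spt(\mu)$ is smooth and asymptotically flat in the direction $\hat\yY$, and taking $B_r(R\hat\yY)$ with $R$ large and $r>4\sqrt{n}$ exhibits the separation with balls $\bar{B}_{2\sqrt{n}}(\xX_\pm)$ required by Definition \ref{Nonterminaldefn}. This contradicts the standing assumption that $\mu$ is collapsed and completes the induction. The main technical obstacle is establishing the planarity rigidity at each intermediate dimension encountered during back-propagation.
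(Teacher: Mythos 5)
The key step you're missing is that the paper's Corollary~\ref{TanConeNonCollapseCor} already lets you propagate non-collapsedness \emph{back up} from a tangent flow to the original measure in a single step. In the paper's proof, the three subcases of the inductive hypothesis applied to $\hat\nu_{-1}$ are handled as follows: if (1) or (3) holds for $\hat\nu_{-1}$, then (3) holds for $\mu$ by stacking a $\mu_\Real$ factor, exactly as you describe; but if (2) holds for $\hat\nu_{-1}$, i.e.\ $\hat\nu_{-1}$ is non-collapsed, then $\hat\nu_{-1}\times\mu_\Real$ is non-collapsed, so the tangent flow $\mathcal{T}$ is non-collapsed at time $0$, and Corollary~\ref{TanConeNonCollapseCor} then gives that $\mathcal{K}$ is non-collapsed at time $0$, that is, $\mu$ satisfies alternative (2). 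There is no need to iterate, and no contradiction to derive: alternative (2) is one of the allowed conclusions of the proposition.

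By contrast, your plan treats the non-collapsed subcase by iterating the tangent-flow construction down through dimensions, and you correctly identify that the resulting terminal scenario (a multiplicity-one line) needs to be ruled out. But this is not in fact a scenario that needs ruling out --- it simply means $\mu$ is non-collapsed, which is conclusion (2). More seriously, the ``back-propagation of planarity'' you propose is a genuine gap: Proposition~\ref{AsymptoticsProp} gives asymptotically conical structure for $\mathcal{SM}_2(3/2)$, but it is not an obvious rigidity statement that a self-shrinker with one planar tangent flow at a point at infinity must itself be a plane, nor is there ``an analogous statement via stratification'' at higher dimensions available in the paper. Fortunately none of this machinery is needed: replace the entire iteration-and-back-propagation argument in the non-collapsed subcase with a direct appeal to Corollary~\ref{TanConeNonCollapseCor}.
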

\begin{proof}
 If $n=2$, the result follows from Corollary \ref{NonampleCor}. We now argue by induction. Suppose the proposition holds for all $2\leq k\leq n-1$. If $\mu\in \mathcal{SM}_n(\lambda_n)$ with $\spt(\mu)$ compact, then we are done. If $\spt(\mu)$ is non-compact, the Brakke flow $\mathcal{K}$ associated to $\mu$ satisfies that $X=\set{\yY\in \Real^{n+1}\setminus\{\OO\}:\Theta_{(\yY, 0)}(\mathcal{K})\geq 1}$ is non-empty.  

Pick a point $\yY\in X$ and a tangent flow $\mathcal{T}\in\mathrm{Tan}_{(\yY,0)}\mathcal{K}$. By Lemma \ref{stratificationLem}, up to an ambient rotation, $\mathcal{T}=\set{\hat{\nu}_t\times\mu_\Real}_{t\in\Real}$ with $\hat{\nu}_{-1}\in\mathcal{SM}_{n-1}(\lambda_{n})$ and $\lambda[\hat{\nu}_{-1}]\leq\lambda[\mu]$. In fact, $\hat{\nu}_{-1}\in \mathcal{SM}_{n-1}(\lambda_{n-1})$, as $\lambda_n<\lambda_{n-1}$ by \eqref{StoneCompEqn}. Hence, the induction hypothesis implies that either \eqref{GAP1}, \eqref{GAP2} or \eqref{GAP3} holds for $\hat{\nu}_{-1}$. If $\hat{\nu}_{-1}$ satisfies either $\eqref{GAP1}$ or $\eqref{GAP3}$, then $\mu$ satisfies \eqref{GAP3}. On the other hand, if $\hat{\nu}_{-1}$ satisfies \eqref{GAP2}, then, by definition, $\hat{\nu}_{-1}\times\mu_\Real$ is strongly non-collapsed and so $\mathcal{T}$ is strongly non-collapsed at time $0$. Hence, Corollary \ref{TanConeNonCollapseCor} implies that $\mathcal{K}$ is strongly non-collapsed at time $0$, that is, $\mu$ is strongly non-collapsed.
\end{proof}

Observe that Proposition \ref{SizesingularProp} and \cite[Theorem 0.7]{CIMW} together imply that $\mathcal{CSM}_{2}(\lambda_2)$ is empty. We will give a different proof of this fact in Section \ref{FinalSec}. In fact, our result will be more general, as we will not establish the \emph{a priori} smoothness which would be needed to appeal to \cite[Theorem 0.7]{CIMW} and so also prove that $\mathcal{CSM}_n(\lambda_n)$ is empty for all $2\leq n\leq 6$ .   

\section{Collapsed Singularities of Compact Mean Curvature Flows}\label{NonAmpleSec}
The goal of this section is to show that  every compact boundary measure of finite entropy admits an integral Brakke flow of a special type. Specifically, a Brakke flow which develops a singularity in finite time at which all tangent flows are collapsed at time $0$. There are three steps to the proof. The first is to show that, under a non-fattening condition, the Brakke flow of a canonical boundary motion collapses at the same time it becomes extinct. The second is to use the genericity of the non-fattening condition in order to take limits and so conclude that for any compact boundary measure, there is an integral Brakke flow that collapses at the same time it becomes extinct. The final step is to show that at the extinction time for these Brakke flows, a singularity forms at which all tangent flows are collapsed at time $0$. 

By the extinction time of a Brakke flow, we mean the minimal time at which the support of the flow is empty. More precisely, if $\mathcal{K}=\set{\mu_t}_{t\geq 0}$ is a non-trivial Brakke flow, then the \emph{extinction time} of $\mathcal{K}$ is
\begin{equation}
T_0(\mathcal{K})=\sup\set{t: \spt(\mu_t)\neq\emptyset}.
\end{equation}
When $\spt(\mu_0)$ is compact, the extinction time can be seen to be finite by comparing with the motion of the boundary of a ball containing the support. As general Brakke flows may gratuitously vanish, they need not be collapsed at their extinction time. However, using the maximum principle, it is true that, under a non-fattening condition, the Brakke flow of a canonical boundary motion must be collapsed at its extinction time. 
\begin{lem}\label{BoundaryMotionExtinctTimeLem}
Let $\mu_0$ be a compact boundary measure for which the level-set flow $\mathcal{L}[\spt(\mu_0)]$ is non-fattening. If $(E, \mathcal{K})$ 
is a canonical boundary motion of $\mu_0$ and $T_0=T_0(\mathcal{K})$ is the extinction time of $\mathcal{K}$, then $\mathcal{K}$ is 
collapsed at time $T_0$.
\end{lem}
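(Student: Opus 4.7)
The plan is to argue by contradiction: I will assume $\mathcal{K}$ is not collapsed at $T_0$ — equivalently, $\mathcal{K}$ is non-collapsed at $T_0$ in the sense of Definition \ref{NoncollapseDefn} — and show that this forces $E_t$ to remain non-empty for some $t > T_0$, contradicting the definition of the extinction time. The key idea is that at a time $s$ slightly before $T_0$, the non-collapsed hypothesis provides an inscribed ball on the ``inside'' of $\spt(\mu_s)$ large enough that the corresponding classical shrinking ball survives strictly past $T_0$; avoidance then traps a non-empty region of $E_t$ past the extinction time.

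By Definition \ref{NoncollapseDefn}, non-collapsedness at $T_0$ yields $(\yY, s) \in \R^{n+1} \times (0, T_0)$, a radius $R > 4\sqrt{n(T_0-s)}$, an $\epsilon>0$, and points $\xX_\pm$ such that $\mathcal{K}$ is a regular flow on $B_R(\yY) \times (s-\epsilon, s+\epsilon)$, and $\spt(\mu_s) = \partial^\ast E_s$ separates $B_R(\yY)$ into two components $\Omega_\pm$ containing the closed balls $\bar B_{r_0}(\xX_\pm)$, where $r_0 = 2\sqrt{n(T_0-s)}$. In the regular region, $\spt(\mu_s)\cap B_R(\yY)$ is a smooth hypersurface which, under non-fattening, coincides with $\Gamma_s = \{u(\cdot,s)=0\}$ there, so $u$ has constant sign on each $\Omega_\pm$. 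Relabeling if necessary, $\bar B_{r_0}(\xX_+) \subset E_s$.

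The heart of the argument is an avoidance/inclusion property for the shrinking closed ball $\bar B_{r(t-s)}(\xX_+)$ with $r(t-s) = \sqrt{r_0^2 - 2n(t-s)}$, which is the classical MCF starting at $\partial B_{r_0}(\xX_+)$ at time $s$ and survives on $[s, 2T_0-s)$ with $2T_0-s > T_0$ since $s < T_0$. First, by Ilmanen's Brakke avoidance principle (\cite[Theorem 10.6]{Ilmanen1}, used as in the proof of Lemma \ref{NoCpctAmpleShrinkerLem}), $\spt(\mu_t) \cap B_{r(t-s)}(\xX_+) = \emptyset$ for every $t \in [s, 2T_0-s)$. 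I then upgrade this disjointness to the inclusion $\bar B_{r(t-s)}(\xX_+) \subset E_t$: setting $B(t) := B_{r(t-s)}(\xX_+)$, disjointness of $\partial^\ast E_t = \spt(\mu_t)$ from the connected open set $B(t)$ forces $\chi_{E_t}$ to have zero total variation there, so $F(t) := |E_t \cap B(t)|$ takes only the values $0$ or $|B(t)|$. Joint continuity of the viscosity solution $u$, together with non-fattening (which rules out the degenerate case $|\Gamma_t \cap B(t)| = |B(t)|$ via the empty-interior property of $\Gamma_t$), makes $F$ continuous on $[s, 2T_0-s)$; since $F(s) = |B(s)| > 0$, a continuous $\{0, |B(t)|\}$-valued function must be identically $|B(t)|$, so $F(t) > 0$ throughout. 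Equivalently, the same conclusion follows from the comparison principle for viscosity solutions of the level-set PDE applied to $u$ and the classical level-set solution $(\xX,t) \mapsto r(t-s)^2 - |\xX - \xX_+|^2$.

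In particular, $E_t \neq \emptyset$ for every $t \in [s, 2T_0-s)$, including some $t \in (T_0, 2T_0-s)$. On the other hand, by the definition of extinction time, $\spt(\mu_t) = \partial^\ast E_t = \emptyset$ for $t > T_0$, so $|D\chi_{E_t}| = 0$ on $\R^{n+1}$; since $E_t$ is open and bounded (contained in the classical shrinking ball that circumscribes $\spt(\mu_0)$), the connectedness of $\R^{n+1}$ forces $E_t = \emptyset$, yielding the desired contradiction. The main obstacle is the inclusion step: Brakke avoidance alone gives disjointness of $\spt(\mu_t)$ from the open shrinking ball, but concluding the open set $E_t$ contains the ball requires the measure-theoretic dichotomy plus continuity, and one must use non-fattening to prevent $\Gamma_t$ from meeting $B(t)$ in a topologically essential way that would separate $\xX_+$ from $E_t$ without creating an element of $\spt(\mu_t)$ inside $B(t)$.
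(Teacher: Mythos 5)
Your proof is correct but takes a genuinely different route from the paper's. The paper also isolates $\Omega_+$ and shows $\chi_{E_s}=1$ a.e.\ there, but it does so via the Poincar\'e inequality for $BV$ functions (\cite[Lemma 6.4]{Simon}) rather than by invoking regularity. It then applies the \emph{level-set} avoidance principle (Proposition~\ref{AvoidProp}) once to obtain $B_{\sqrt{2n(T_0-s)}}(\xX_+)\subset\{u(\cdot,T_0)\geq 0\}$; since $E_{T_0}=\emptyset$, this set equals $\Gamma_{T_0}$, which therefore has non-empty interior, contradicting the non-fattening hypothesis directly. You instead use the Brakke avoidance principle (\cite[Theorem 10.6]{Ilmanen1}) as in Lemma~\ref{NoCpctAmpleShrinkerLem} to get $\spt(\mu_t)\cap B(t)=\emptyset$, then run a measure-theoretic dichotomy-plus-continuity argument for $F(t)=|E_t\cap B(t)|$ to propagate $F>0$ past $T_0$, contradicting the definition of extinction time. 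Both arguments work; the paper's is shorter because it reaches the contradiction at a single time slice $T_0$, whereas yours must establish continuity of $F$ across the whole interval (which, as you note, also uses non-fattening to exclude $|\Gamma_t\cap B(t)|=|B(t)|$).

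One small imprecision to flag: your justification that $\bar B_{r_0}(\xX_+)\subset E_s$ rests on the claim that in the regular region $\spt(\mu_s)$ coincides with $\Gamma_s$, so that $u$ changes sign across it. Non-fattening only guarantees $\Gamma_s$ has no interior; it does not by itself rule out extra ``thin'' pieces of $\Gamma_s$ in $B_R(\yY)$ lying off $\spt(\mu_s)=\spt(|D\chi_{E_s}|)$, so the identification of $\Gamma_s$ with $\spt(\mu_s)$ there is not automatic. What you actually need --- and what suffices for $F(s)=|B(s)|$ --- is only that $\chi_{E_s}=1$ a.e.\ on $\Omega_+$ (after relabeling), which follows because $|D\chi_{E_s}|(\Omega_+)=\mu_s(\Omega_+)=0$ and $\Omega_+$ is connected, forcing $\chi_{E_s}$ a.e.\ constant on $\Omega_+$; since $\spt(\mu_s)\cap B_R(\yY)$ is a smooth hypersurface (hence Lebesgue-null) and nonempty, $\chi_{E_s}$ cannot vanish a.e.\ on both $\Omega_\pm$. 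This is exactly the paper's Poincar\'e/constancy step, and replacing your regularity-based assertion by it closes the gap. The subsequent set inclusions $\bar B_{r(t-s)}(\xX_+)\subset E_t$ are likewise established only a.e.\ by your argument, but since you only use them through $F(t)>0$, that is harmless.
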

\begin{proof}
By definition, there is a continuous function $u_0:\Real^{n+1}\to \Real$ and weak solution $u$ to \eqref{LevelsetEqn} with initial condition $u_0$ so that $E=\set{(\xX,t): u(\xX,t)>0}$ and $\mathcal{K}=\set{\mu_t}_{t\geq 0}$ with $\mu_t=\mathcal{H}^n\lfloor\partial^\ast E_t$. Recall that $E_t=\set{\xX: u(\xX,t)>0}$ and $E_t$ is of finite perimeter. As $E_0$ is a non-empty bounded open set, it follows from the avoidance principle, Proposition \ref{AvoidProp}, and the isoperimetric inequality \cite[Theorem 1.28]{Giusti} that the extinction time, $T_0$, satisfies $0<T_0<\infty$ and $E_t$ is empty for $t\geq T_0$. Hence, as the level-set flow of $\spt(\mu_0)$ does not fatten, each $\set{\xX: u(\xX,t)\geq 0}$ for $t\geq T_0$ does not have interior.

Suppose that $\mathcal{K}=\set{\mu_t}_{t\geq 0}$ is non-collapsed at time $T_0$. That is, there is a $(\yY, s)\in \Real^{n+1}\times 
(0,T_0)$ and an $R>0$ so that $\spt(\mu_s)$ separates $B_R(\yY)$ into two components $\Omega_\pm$ containing closed balls 
$\bar{B}_\pm=\bar{B}_{2\sqrt{n(T_0-s)}}(\mathbf{x}_\pm)$. As $B_R(\yY)\cap \spt(\mu_s) \neq 
\emptyset$ and $\mu_s=\abs{D\chi_{E_s}}$, $B_R(\yY)\cap E_s\neq \emptyset$ 
and so, up to relabeling, we may assume that $\Omega_+\cap E_s\neq \emptyset$.
Let $E_s^-=\set{\xX: u(\xX,s)<0}$, we claim $\bar{B}_+\cap E_s^-=\emptyset$. 
To see this, we first note that, as $\mu_s$ is Radon and $\spt(\mu_s)\cap \Omega_+=\emptyset$, $\mu_s(\Omega_+)=0$. 
Applying the Poincar\'{e} inequality for BV 
functions \cite[Lemma 6.4]{Simon} to $\chi_{E_s}\in BV_{loc}(\Omega_+)$, we conclude that, $\chi_{E_s}(\xX)=1$ for a.e. 
$\xX\in\Omega_+$. However,  $E^-_s$ is open and so if $E^-_s\cap\Omega_{+}$ is non-empty, then it has positive Lebesgue measure and so 
$E_s^-\cap\Omega_{+}=\emptyset$, which verifes the claim. Finally, by appealing to the avoidance principle, Proposition \ref{AvoidProp}, we 
conclude that $B_{\sqrt{2n(T_0-s)}}(\mathbf{x}_+)\subset\set{\xX: u(\xX,T_0)\geq 0}$, that is, the latter set has non-empty interior.  This 
contradicts our earlier conclusion.  Thus, $\mathcal{K}$ must be collapsed at its extinction time $T_0$.
\end{proof}

Due to the possibility of fattening, we are not able to ensure the existence of a boundary motion starting from an arbitrary 
compact boundary measure; see \cite[Problem B]{Ilmanen1}. Nevertheless, because the non-fattening condition is generic, we can still 
construct some Brakke flow which is collapsed at the extinction time of the flow.
\begin{prop}\label{NonAmpleProp}
If $\mu_0$ is a compact boundary measure with finite entropy, then there is an integral Brakke flow $\mathcal{K}=\set{\mu_t}_{t\geq 0}$ with bounded area ratios and its extinction time $T_0=T_0(\mathcal{K})>0$ which is collapsed at time $T_0$. Moreover, there is a point $\xX_0\in \Real^{n+1}$ so that $\Theta_{(\xX_0,T_0)}(\mathcal{K})\geq 1$ and all tangent flows to $\mathcal{K}$ at $(\xX_0,T_0)$ are collapsed at time $0$.
\end{prop}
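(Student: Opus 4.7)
The plan is to carry out the three-stage strategy outlined at the start of this section. I would produce approximating non-fattening canonical boundary motions, apply Lemma \ref{BoundaryMotionExtinctTimeLem} to obtain Brakke flows collapsed at their extinction times, and then pass to a subsequential limit; the tangent flow statement will then follow quickly from Corollary \ref{TanConeNonCollapseCor}.

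Using Lemma \ref{ApproxLem} combined with the standard genericity of the non-fattening condition for level-set flows (as in \cite[Theorem 11.3]{Ilmanen1}), I construct a sequence of compact boundary measures $\mu_0^j=\mathcal{H}^n\lfloor\partial E_j$ with $\partial E_j$ smooth so that $\mu_0^j\to\mu_0$ weakly (in fact as $n$-measures by (3) of Lemma \ref{ApproxLem}), the $\partial E_j$ satisfy a uniform area-ratio bound inherited from $\lambda[\mu_0]<\infty$, each $E_j$ contains a fixed ball $B_r(\xX_0)$ and is contained in a fixed ball $B_{2R}(\OO)$, and each $\mathcal{L}[\partial E_j]$ is non-fattening (achieved, for instance, by first approximating by smooth boundaries via Lemma \ref{ApproxLem} and then slightly dilating). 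Theorem \ref{UnitdensityThm} then produces canonical boundary motions $(E^j,\mathcal{K}^j)$, and Lemma \ref{BoundaryMotionExtinctTimeLem} shows each Brakke flow $\mathcal{K}^j=\set{\mu^j_t}_{t\geq 0}$ is collapsed at its own extinction time $T_0^j$. Avoidance comparison with the inscribed and enclosing round spheres then yields the uniform sandwich $r^2/(2n)\leq T_0^j\leq 2R^2/n$.

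Applying the Brakke compactness Theorem \ref{BrakkeFlowCompactThm} and extracting subsequences, $\mathcal{K}^j\to\mathcal{K}$ for an integral Brakke flow $\mathcal{K}=\set{\mu_t}_{t\geq 0}$ with initial datum $\mu_0$ and bounded area ratios (Corollary \ref{ArearatioCor}), and $T_0^j\to T_0\in(0,\infty)$. Proposition \ref{NonTermOpenCondProp} immediately yields that $\mathcal{K}$ is collapsed at this $T_0$. The main obstacle in this step is to confirm that $T_0$ coincides with the extinction time of $\mathcal{K}$: the bound $T_0(\mathcal{K})\leq T_0$ is immediate from weak convergence together with $\mu^j_t=0$ for $t>T_0^j$, but the matching lower bound requires ruling out gratuitous premature vanishing of the limit. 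The plan is to track the interior sets: by uniform BV bounds on $\chi_{E^j_t}$ one extracts characteristic functions $\chi_{F_t}$ of sets of finite perimeter with $\mu_t=\mathcal{H}^n\lfloor\partial^\ast F_t$, and then uses the avoidance principle against a shrinking ball inscribed in $E^j_t$, propagated in time via the collapsed structure of $\mathcal{K}^j$ near its extinction, to force $|F_t|>0$, and hence by isoperimetry $\mu_t\neq 0$, for all $t<T_0$.

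For the final conclusion, pick $t_k\nearrow T_0$ and $\yY_k\in\spt(\mu_{t_k})$; since $\mu_{t_k}$ is integral, $\Theta_{(\yY_k,t_k)}(\mathcal{K})\geq\theta_{\mu_{t_k}}(\yY_k)\geq 1$. The avoidance principle forces the $\yY_k$ to lie in a fixed bounded subset, so a subsequence converges to a limit $\xX_0$, and upper semi-continuity of the Gaussian density (Corollary \ref{UpperctsCor}) gives $\Theta_{(\xX_0,T_0)}(\mathcal{K})\geq 1$. Now suppose some $\mathcal{T}\in\mathrm{Tan}_{(\xX_0,T_0)}\mathcal{K}$ were non-collapsed at time $0$; Corollary \ref{TanConeNonCollapseCor} would then force $\mathcal{K}$ to be non-collapsed at time $T_0$, contradicting what has just been established. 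Therefore every tangent flow at $(\xX_0,T_0)$ is collapsed at time $0$, completing the plan.
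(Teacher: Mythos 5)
Your overall architecture matches the paper's: approximate $\mu_0$ by smooth compact boundary measures via Lemma~\ref{ApproxLem}, perturb to ensure the non-fattening condition, invoke Theorem~\ref{UnitdensityThm} to get canonical boundary motions $(E^j,\mathcal{K}^j)$, use Lemma~\ref{BoundaryMotionExtinctTimeLem} to make each $\mathcal{K}^j$ collapsed at its extinction time $T_0^j$, bound the $T_0^j$ away from $0$ and $\infty$ by comparison with spheres, pass to a limit $\mathcal{K}$ via Theorem~\ref{BrakkeFlowCompactThm}, and finish with the upper semi-continuity of Gaussian density and Corollary~\ref{TanConeNonCollapseCor}. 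You also correctly identify that the delicate step is ruling out premature (gratuitous) vanishing of the limit $\mathcal{K}$ before time $T_0=\lim T_0^j$.

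However, your proposed mechanism for closing that gap does not work as sketched. You want to extract interior sets $F_t$ from the $E^j_t$ and force $|F_t|>0$ via avoidance with a ``shrinking ball inscribed in $E^j_t$, propagated in time via the collapsed structure of $\mathcal{K}^j$.'' The avoidance principle propagates a ball only from an initial time at which you have one of definite size; the ball $B_{\sqrt{2n}r}(\yY)\subset E^j_0$ yields control only up to $t=r^2$, far short of $T_0$ (which can be as large as $4R^2$), and for a general time $t$ there is no a priori inscribed ball of controlled radius in $E^j_t$. Moreover, being collapsed is a \emph{negative} condition --- the absence of a separating ball configuration --- so invoking ``the collapsed structure near extinction'' does not furnish the positive mechanism you need; if anything it points the wrong way. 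Also, the assertion $\mu_t=\mathcal{H}^n\lfloor\partial^*F_t$ for the limit is not justified (lower semi-continuity of the BV perimeter gives only $\mu_t\geq |D\chi_{F_t}|$; the limit $\mathcal{K}$ need not itself be a boundary motion), though the inequality would suffice for $\mu_t\neq 0$ if you had $|F_t|>0$.

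The paper closes this gap with Brakke's clearing out lemma \cite[Lemma 6.3]{B}: suppose $\spt(\mu_{T'})=\emptyset$ for some $T'<T_0$. Since all the flows live in the fixed compact region $B_{2\sqrt{2n}R}(\yY)$ and $\mu^j_{T'}\to\mu_{T'}=0$, for $j$ large one has $\mu^j_{T'}(B_\rho(\xX))<\eta\rho^n$ at every center $\xX$ in a fixed finite cover of that region; the clearing out lemma then forces $\spt(\mu^j_t)=\emptyset$ for $t\geq T'+c\rho^2$, with $c\rho^2$ as small as desired, contradicting $T_0^j\to T_0>T'$. You should replace the ``track the interior sets'' step with this argument. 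The remaining parts of your proposal (including the final tangent-flow conclusion) are correct and coincide with the paper's.
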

\begin{proof}
If $\mu_0$ is a compact boundary measure with $\lambda[\mu_0]<\infty$, then $\spt (\mu_0)=\partial E_0\neq\emptyset$ and $\mu_0=|D\chi_{E_0}|$ for some non-empty open bounded set $E_0$ of finite perimeter. Fix $\yY$ and $R>r>0$ so that $B_{2\sqrt{2n}r}(\yY)\subset E_0\subset B_{\sqrt{2n}R}(\yY)$. Thus, by Lemma \ref{ApproxLem}, there exists a sequence of bounded open sets $E^i_0$ of finitely many components with $\partial E^i_0$ smooth embedded such that: $B_{\sqrt{2n}r}(\yY)\subset E^i_0\subset B_{2\sqrt{2n}R}(\yY)$, $\chi_{E^i_0}\to\chi_{E_0}$ in $L^1(\Real^{n+1})$ and $\mu^i_0=|D\chi_{E^i_0}|\to |D\chi_{E_0}|=\mu_0$ in the sense of measures. Recall that by  \cite[Theorem 11.3]{Ilmanen1}, the non-fattening condition for level-set flows is generic. Hence, as $\partial E_0^i$ is smooth, we can use the signed distance function to $\partial E_0^i$ to perturb $E_0^i$ in a smooth manner so that the level-set flow $\mathcal{L}(\partial E_0^i)$ satisfies the non-fattening condition. By Theorem \ref{UnitdensityThm}, there exists a canonical boundary motion $(E^i,\mathcal{K}^i)$ of $\mu^i_0$ for each $i$. By comparing with the mean curvature flows of $\partial B_{\sqrt{2n}r}(\yY)$ and $\partial B_{2\sqrt{2n}R}(\yY)$, one verifies that the extinction time $T^i_0$ of the $\mathcal{K}^i$ is contained in the interval $\left[r^2, 4 R^2\right]$.  

As $\mu^i_0\to\mu_0$, for $i$ large enough, $\mu^i_0(\Real^{n+1})\leq 2\mu_0(\Real^{n+1})<\infty$. Hence, the compactness theory of Brakke flows, Theorem \ref{BrakkeFlowCompactThm}, implies that, up to passing to a subsequence and relabeling, the $\mathcal{K}^i=\set{\mu_t^i}_{t\geq 0}$ converges to an integral Brakke flow $\mathcal{K}=\set{\mu_t}_{t\geq 0}$. As $\lambda[\mu_0]<\infty$, it follows from the monotonicity formula in Proposition \ref{MonotoneProp} that $\mathcal{K}$ has bounded area ratios. By passing to a further subsequence, we may assume the extinction times $T^i_0\to T_0\in\left[r^2, 4 R^2\right]$.  

On one hand, the clearing out lemma for Brakke flows \cite[Lemma 6.3]{B}, together with the construction of $\mathcal{K}$ and the fact that $\spt(\mu_t^i)\subset B_{2\sqrt{2n}R}(\yY)$, implies that $\mathcal{K}$ has extinction time $T_0$.  On the other, by Lemma \ref{BoundaryMotionExtinctTimeLem}, the $\mathcal{K}^i$ are collapsed at time $T^i_0$ and so Proposition \ref{NonTermOpenCondProp} implies that $\mathcal{K}$ is also collapsed at time $T_0$. Finally, as $\mathcal{K}$ has extinction time $T_0$,  there exists a sequences $t_i<T_0$ with $t_i\to T_0$ and $\xX_i\in\spt (\mu_{t_i})$ such that $\Theta_{(\xX_i,t_i)}(\mathcal{K})\geq 1$. As $\spt(\mu_t)\subset B_{\sqrt{2n}R}(\yY)$ for all $t$, up to passing to a subsequence and relabeling, $\xX_i\to\xX_0$ and thus, appealing to the upper semi-continuity of Gaussian density, Corollary \ref{UpperctsCor}, we have that $\Theta_{(\xX_0,T_0)}(\mathcal{K})\geq 1$. Hence, as $\mathcal{K}$ is collapsed at time $T_0$, Corollary \ref{TanConeNonCollapseCor} implies that every tangent flow $\mathcal{T}\in\mathrm{Tan}_{(\xX_0,T_0)}\mathcal{K}$ is collapsed at time $0$.
\end{proof}

\section{Entropy Lower Bound}\label{FinalSec}
Observe that \eqref{StoneCompEqn} implies that $\mathcal{CSM}_{k}(\lambda_n)\subset\mathcal{CSM}_{k}(\lambda_k)$ for $2\leq k \leq n-1$. 
\begin{lem}\label{CpctShrinkerInductionLem} 
For $n\geq 2$, if for all $1\leq k\leq n-1$,  $\mathcal{CSM}_{k}(\lambda_{n})$ is empty, then either $\mathcal{CSM}_{n}(\lambda_n)$ is empty or there is a $\mu_0\in \mathcal{CSM}_{n}(\lambda_n)$ so that
\begin{equation}
\lambda[\mu_0]=\inf\set{\lambda[\mu]: \mu\in\mathcal{CSM}_{n}(\lambda_n)}. 
\end{equation}
Furthermore, $\mu_0$ satisfies the following properties:
\begin{enumerate}
 \item $\mu_0$ is a compact boundary measure;
 \item $V_{\mu_0}$ is entropy stable in the sense of \cite[Theorem 0.14]{CM};
 \item $\sing(\mu_0)$ has Hausdorff dimension at most $n-7$.
\end{enumerate}
\end{lem}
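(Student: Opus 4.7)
The plan is to take a minimizing sequence in $\mathcal{CSM}_n(\lambda_n)$ (assuming this space is non-empty), extract a compact self-shrinking limit via the Brakke-flow compactness theorem, and then verify the three properties by combining minimality with the dynamical input from Proposition \ref{NonAmpleProp}.

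For the existence of the minimizer, take $\mu_i\in\mathcal{CSM}_n(\lambda_n)$ with $\lambda[\mu_i]\to\lambda_0:=\inf_{\mathcal{CSM}_n(\lambda_n)}\lambda$. The entropy bound gives uniformly bounded area ratios for the associated self-similar Brakke flows $\mathcal{K}_i=\set{\mu_i^{\OO,\sqrt{-t}}}_{t<0}$, so Theorem \ref{BrakkeFlowCompactThm} produces a subsequential Brakke-flow limit $\mathcal{K}_\infty$ which inherits backward self-similarity about $(\OO,0)$. Non-triviality of $\mathcal{K}_\infty$ follows from $\mathbf{F}[\mu_i]=\Theta_{(\OO,0)}(\mathcal{K}_i)\geq 1$ (integer rectifiability together with the fact that each $\spt(\mu_i)$ contracts to $\OO$ as $t\to 0^-$) combined with upper semi-continuity of the Gaussian density, so the $t=-1$ slice $\mu_\infty\in\mathcal{SM}_n$ satisfies $\lambda[\mu_\infty]\leq\lambda_0<\lambda_n$ by lower semi-continuity of entropy. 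To conclude $\mu_\infty\in\mathcal{CSM}_n(\lambda_n)$, apply Proposition \ref{GeneralAsympProp}: case (\ref{GAP1}) is the desired outcome; case (\ref{GAP3}) produces an element of $\mathcal{CSM}_k(\lambda_n)$ for some $1\leq k\leq n-1$, contradicting the hypothesis; case (\ref{GAP2}) would force $\mu_\infty$ to be non-collapsed, which is ruled out since each $\mu_i$ is collapsed by Lemma \ref{NoCpctAmpleShrinkerLem} and Proposition \ref{NonTermOpenCondProp} passes the collapsed property to the limit. Hence $\mu_\infty$ realizes the infimum, and property (1) follows immediately from Proposition \ref{OrientabilityLowEnt}.

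For property (2), suppose toward contradiction that $V_{\mu_\infty}$ is not entropy stable in the sense of \cite[Theorem 0.14]{CM}. Then there is a smooth closed hypersurface $\Sigma'$, obtained as a graphical perturbation over $\reg(\mu_\infty)$, with $\lambda[\Sigma']<\lambda[\mu_\infty]=\lambda_0$. Apply Proposition \ref{NonAmpleProp} to the compact boundary measure $\mu_{\Sigma'}$ to obtain an integral Brakke flow $\mathcal{K}$ which is collapsed at its finite extinction time $T_0$ and admits a tangent flow $\mathcal{T}\in\mathrm{Tan}_{(\xX_0,T_0)}\mathcal{K}$ that is collapsed at time $0$. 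By Theorem \ref{BlowupsThm}, $\mathcal{T}=\set{\nu_t}_{t\in\Real}$ is the backwardly self-similar flow associated to $\nu_{-1}\in\mathcal{SM}_n$, and monotonicity gives $\lambda[\nu_{-1}]\leq\lambda[\Sigma']<\lambda_0$. Applying Proposition \ref{GeneralAsympProp} to $\nu_{-1}$ yields a contradiction in every case: case (\ref{GAP1}) violates minimality of $\mu_\infty$; case (\ref{GAP2}) violates that $\mathcal{T}$ is collapsed at time $0$; case (\ref{GAP3}) again contradicts emptiness of $\mathcal{CSM}_k(\lambda_n)$. Property (3) follows by a Federer-type dimension reduction: Allard-type blow-ups at points of $\sing(\mu_\infty)$ produce stationary integer cones of entropy at most $\lambda_0$, and the entropy minimality of $\mu_\infty$ (via a local gluing and short-time Brakke-flow argument analogous to the one used for (2)) forces each such cone to be stable as a stationary varifold. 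Standard stratification as in \cite{White}, combined with Wickramasekera-type regularity for stable stationary hypersurfaces, then shows that $\sing(\mu_\infty)$ has Hausdorff dimension at most $n-7$.

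The main obstacle is that $\mu_\infty$ may be singular, so both perturbation arguments above must be carried out carefully over the regular part alone. The hypothesis of \cite[Theorem 0.14]{CM} is formulated for smooth closed shrinkers, and extracting an honest smooth closed competitor $\Sigma'$ with $\lambda[\Sigma']<\lambda[\mu_\infty]$ from non-stability of $\mu_\infty$ requires using the Hausdorff dimension estimate $\dim\sing(\mu_\infty)\leq n-3$ from Proposition \ref{SizesingularProp} to smoothly cap off or avoid the singular set at negligible entropy cost. A parallel cutoff-and-smoothing construction is required to transfer any mass-decreasing perturbation of a singular tangent cone back into a genuine entropy-decreasing perturbation of $\mu_\infty$, which is the crux of the dimension-reduction argument for (3).
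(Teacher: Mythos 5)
Your overall architecture matches the paper's: extract a limit from a minimizing sequence, use the collapsed property (via Lemma \ref{NoCpctAmpleShrinkerLem} and Proposition \ref{NonTermOpenCondProp}) together with Proposition \ref{GeneralAsympProp} and the inductive hypothesis to show the limit is compact and realizes the infimum, and then use the dynamical input from Proposition \ref{NonAmpleProp} to establish entropy stability. There are two points worth flagging, one a non-essential difference in route and one a genuine imprecision.

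First, for the compactness step you invoke Theorem \ref{BrakkeFlowCompactThm} and pass to a backwardly self-similar limit flow; the paper instead applies Allard's integral compactness directly to the varifolds $V_{\mu_i}$, since the self-shrinker equation gives a locally bounded first variation and the entropy bound gives uniform local mass bounds. Both routes are valid; the paper's is slightly more economical because it never has to re-extract a time slice and argue non-triviality via Gaussian density (it instead notes $\spt(\mu_i)\cap B_{2n}(\OO)\neq\emptyset$ to prevent the limit from being trivial).

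Second, and this is the real issue: in your argument for property (2) you assert that entropy instability yields a \emph{smooth closed hypersurface} $\Sigma'$ with $\lambda[\Sigma']<\lambda_0$, and you acknowledge that producing a smooth closed competitor would require ``capping off'' near $\sing(\mu_\infty)$ at negligible entropy cost. This capping step is not justified and, more to the point, it is unnecessary: Proposition \ref{NonAmpleProp} requires only a \emph{compact boundary measure}, not a smooth hypersurface. The paper avoids the whole detour by working directly with push-forwards $\mu_0^\tau=\phi^\tau_*\mu_0$ along the flow of a compactly supported vector field $X$ with $\spt(X)\cap\sing(\mu_0)=\emptyset$ (using Proposition \ref{SizesingularProp} to know $\sing(\mu_0)$ is small). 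Since $\phi^\tau$ is a diffeomorphism, each $\mu_0^\tau$ is automatically a compact boundary measure, Proposition \ref{NonAmpleProp} applies, and one reads off $\lambda[\mu_0^\tau]\geq\Lambda_n$ directly --- proving entropy stability without contradiction and without any smoothing construction. You should replace ``smooth closed hypersurface $\Sigma'$'' with ``compact boundary measure obtained by flowing $\mu_\infty$ by a vector field supported away from the singular set'' and drop the capping discussion entirely.

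Third, for property (3) the paper simply applies \cite[Theorem 0.14]{CM} as a black box once entropy stability is established: that theorem already contains the $\dim\sing\leq n-7$ conclusion for entropy stable self-shrinkers. Your proposed reconstruction via Federer dimension reduction, passing stability to tangent cones, and invoking Wickramasekera-type regularity, is not incorrect in spirit, but it is reproving a cited result and leaves several steps (the ``local gluing and short-time Brakke-flow argument'') unelaborated. Citing \cite[Theorem 0.14]{CM} is both cleaner and what the lemma statement's phrasing ``in the sense of [CM, Theorem 0.14]'' is signaling.
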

\begin{proof}
If $\mathcal{CSM}_{n}(\lambda_n)$ is empty, then we are done. If not, we can define
\begin{equation}
\Lambda_n=\inf\set{\lambda [\mu]: \mu\in\mathcal{CSM}_n(\lambda_n)}.
\end{equation}
Since, for each $\mu\in\mathcal{CSM}_n$, $\spt(\mu)\neq\emptyset$ and $\mu$ has integer multiplicity, $\lambda[\mu]\geq 1$ and so $\Lambda_n\geq 1$. Moreover, by the avoidance principle, Proposition \ref{AvoidProp},  $\spt(\mu)\cap B_{2n}(\OO)\neq\emptyset$.  Hence, if $\mu_i\in\mathcal{CSM}_n(\lambda_n)$ is a minimizing sequence, then Allard's integral compactness theorem (see \cite[Theorem 42.7 and Remark 42.8]{Simon}) implies that, up to passing to a subsequence and relabeling, $\mu_i\to\mu_0$ for $\mu_0\in\mathcal{SM}_n$ with $1\leq \lambda[\mu_0]\leq\Lambda_n$. 

We first show that the assumption that $\mathcal{CSM}_{k}(\lambda_{n})=\emptyset$ for all $1\leq k\leq n-1$ implies that $\mu_0\in\mathcal{CSM}_{n}(\lambda_n)$. Indeed, by Proposition \ref{OrientabilityLowEnt}, the $\mu_i$ are compact boundary measures and so are collapsed by Lemma \ref{NoCpctAmpleShrinkerLem}. Thus, $\mu_0$ is collapsed by Proposition \ref{NonTermOpenCondProp} applied to the associated Brakke flows. Hence, Proposition \ref{GeneralAsympProp} together with our hypothesis implies that $\mu_0\in\mathcal{CSM}_n(\lambda_n)$ and so $\lambda[\mu_0]=\Lambda_n$.

We next show that $V_{\mu_0}$ is entropy stable. Indeed, by Propositions \ref{SizesingularProp} and \ref{OrientabilityLowEnt}, $\mu_0$ is a compact boundary measure with $\mathcal{H}^{n-2}(\sing(\mu_0))=0$. Let $X$ be any compactly supported vector field on $\Real^{n+1}$ with $\spt(X)\cap\sing(\mu_0)=\emptyset$. Denote the flow of $X$ by $\tau\mapsto\phi^\tau$ and set $\mu^\tau_0=\phi^\tau_*\mu_0$. Clearly, the $\mu^\tau_0$ are compact boundary measures and we may assume that the $\mu^\tau_0$ are of finite entropy (otherwise, we are done). Thus, Proposition \ref{NonAmpleProp} gives integral Brakke flows $\mathcal{K}^\tau=\set{\mu_{t}^\tau}_{t\geq 0}$ with bounded area ratios, and points $(\yY^\tau,s^\tau)\in\Real^{n+1}\times\Real^+$ so that the tangent flows to $\mathcal{K}^\tau$ at $(\yY^\tau,s^\tau)$ are collapsed at time $0$. Invoking Proposition \ref{GeneralAsympProp} again, the entropy of the time $-1$ slice of these tangent flows is bounded from below by $\Lambda_n$. Thus, the monotonicity formula in Proposition \ref{MonotoneProp} implies that $\lambda[\mu^\tau_0]\geq\Lambda_n$. Hence, $V_{\mu_0}$ is entropy stable and we may apply \cite[Theorem 0.14]{CM} to conclude that $\sing(\mu_0)$ has Hausdorff dimension at most $n-7$.
\end{proof}

Using Lemma \ref{CpctShrinkerInductionLem}, it is easy to establish the following non-existence result.
\begin{prop}\label{ShrinkerMinimizersVarifoldProp} 
If $2\leq n\leq 6$, then $\mathcal{CSM}_{n}(\lambda_n)$ is empty. If $n=7$ and $\mathcal{CSM}_{7}(\lambda_7)$ is non-empty, then there is a $\mu_0\in \mathcal{CSM}_{7}(\lambda_7)$ so that
\begin{equation}
\lambda[\mu_0]=\inf\set{ \lambda[\mu]: \mu\in\mathcal{CSM}_{7}(\lambda_7)}. 
\end{equation}
Furthermore, $\mu_0$ satisfies the following properties:
\begin{enumerate}
 \item $\mu_0$ is a compact boundary measure;
 \item $V_{\mu_0}$ is entropy stable in the sense of \cite[Theorem 0.14]{CM};
 \item $\sing(\mu_0)$ consists of a non-empty finite set of points.
\end{enumerate}
\end{prop}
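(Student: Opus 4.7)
The plan is to argue by induction on $n$ from $n=2$ up to $n=7$, applying Lemma \ref{CpctShrinkerInductionLem} at each stage. Most of the substantive work is already contained in that lemma, so the task reduces to (a) verifying its induction hypothesis, and (b) extracting a contradiction or a non-emptiness statement from its conclusions via \cite[Theorem 0.14]{CM}.

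First I would dispose of the $k=1$ case once and for all. For every $n\geq 2$, \eqref{StoneCompEqn} gives $\lambda_n<3/2$, so by Proposition \ref{SizesingularProp} any $\mu\in\mathcal{CSM}_1(\lambda_n)$ has empty singular set and is therefore $\mu_\gamma$ for some smooth embedded closed self-shrinking curve $\gamma\subset\R^2$. The Abresch--Langer classification \cite{AL} then forces $\gamma$ to be the round circle $\mathbb{S}^1$, so $\lambda[\mu]=\lambda_1>3/2>\lambda_n$, a contradiction. Hence $\mathcal{CSM}_1(\lambda_n)=\emptyset$ for every $n\geq 2$.

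Now assume inductively that $\mathcal{CSM}_k(\lambda_k)=\emptyset$ for all $2\leq k\leq n-1$. By \eqref{StoneCompEqn}, $\lambda_n<\lambda_k$, so $\mathcal{CSM}_k(\lambda_n)\subset\mathcal{CSM}_k(\lambda_k)=\emptyset$, and together with the preceding paragraph this verifies the hypothesis of Lemma \ref{CpctShrinkerInductionLem} at dimension $n$. The lemma then yields the dichotomy: either $\mathcal{CSM}_n(\lambda_n)=\emptyset$, or there exists a minimizer $\mu_0\in\mathcal{CSM}_n(\lambda_n)$ which is a compact boundary measure, is entropy stable, and whose singular set satisfies $\dim_{\mathcal{H}}\sing(\mu_0)\leq n-7$. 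For $2\leq n\leq 6$ the bound is negative, forcing $\sing(\mu_0)=\emptyset$, so $\mu_0=\mu_{\Sigma_0}$ for a smooth closed self-shrinker $\Sigma_0$. Applying \cite[Theorem 0.14]{CM} in its smooth form, $\Sigma_0$ must be a generalized cylinder $\mathbb{S}^{n-j}\times\R^j$, but the only closed such cylinder is $\mathbb{S}^n$, whose entropy is exactly $\lambda_n$, contradicting $\lambda[\mu_0]<\lambda_n$. Thus the second alternative fails and $\mathcal{CSM}_n(\lambda_n)=\emptyset$, closing the induction for $n\leq 6$.

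For $n=7$ the same induction produces either emptiness or a minimizer $\mu_0$ satisfying (1) and (2) of the proposition. Smoothness of $\mu_0$ is again ruled out by \cite[Theorem 0.14]{CM} exactly as above, so $\sing(\mu_0)\neq\emptyset$. The principal remaining obstacle, and what I expect to be the hard part, is to promote the a priori bound $\dim_{\mathcal{H}}\sing(\mu_0)\leq 0$ to discreteness and hence finiteness of $\sing(\mu_0)$. My plan is a Federer-type dimension reduction in the style of Schoen--Simon for stable minimal hypersurfaces in $\R^8$: if some $\xX_0\in\sing(\mu_0)$ were an accumulation point of $\sing(\mu_0)$, blowing up at $\xX_0$ (using Allard's integral compactness theorem as in the proof of Proposition \ref{SizesingularProp}) would produce a stationary integral tangent cone in $\R^8$ with a singularity at a point other than the origin, and its conical structure would force the singular set of this cone to contain a ray and hence to have Hausdorff dimension at least $1$. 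But as a blow-up of an entropy-stable shrinker the cone inherits the relevant stability, so the same singular stability theory invoked in Lemma \ref{CpctShrinkerInductionLem} forces its singular set to be $0$-dimensional, a contradiction. Consequently $\sing(\mu_0)$ is a closed discrete subset of the compact set $\spt(\mu_0)$, and hence a non-empty finite set, as required.
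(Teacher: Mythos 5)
Your proposal is correct and follows essentially the same route as the paper: induction on $n$ through Lemma \ref{CpctShrinkerInductionLem} (with $\mathcal{CSM}_k(\lambda_n)\subset\mathcal{CSM}_k(\lambda_k)$ from \eqref{StoneCompEqn} to carry the inductive hypothesis), the dimension bound $\dim_{\mathcal{H}}\sing(\mu_0)\leq n-7$ forcing smoothness and a contradiction via the Colding--Minicozzi classification of entropy-stable shrinkers for $2\leq n\leq 6$, and for $n=7$ a blow-up producing a stable stationary cone in $\R^8$ whose isolated singularity (via Schoen--Simon) forces $\sing(\mu_0)$ to be discrete and hence finite. The only cosmetic difference is that the paper cites \cite[Theorem 0.12]{CM} for the smooth base case and \cite[Theorem 0.14]{CM} inductively, whereas you cite the latter throughout; the substance is identical.
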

\begin{rem}
The existence of singular points when $n=7$ is due to the possible existence of entropy stable cones in $\Real^8$ with entropy less than $\lambda_7$.
\end{rem}
\begin{proof}
We argue by induction. If $n=2$, then the hypothesis of Lemma \ref{CpctShrinkerInductionLem} holds by direct computation. If $\mathcal{CSM}_2(\lambda_2)$ is non-empty, there exists a $\mu_0=\mu_\Sigma\in\mathcal{CSM}_2(\lambda_2)$ with $\Sigma$ an entropy stable closed self-shrinker. Thus, \cite[Theorem 0.12]{CM} implies that $\Sigma=\rho\mathbb{S}^2+\yY$ and so $\lambda[\mu_0]=\lambda_2$, which is a contradiction. Hence, $\mathcal{CSM}_2(\lambda_2)$ is empty. Arguing inductively, Lemma \ref{CpctShrinkerInductionLem} and \cite[Theorem 0.14]{CM} imply that $\mathcal{CSM}_n(\lambda_n)$ is empty for $2\leq n\leq 6$.  

For $n=7$, if $\mathcal{CSM}_7(\lambda_7)$ is non-empty, there exists an entropy stable $\mu_0\in\mathcal{CSM}_7(\lambda_7)$. Hence, by the classification of entropy stable self-shrinkers in \cite[Theorem 0.12]{CM}, $\sing (\mu_0)$ must be non-empty. Given $\yY\in\sing (\mu_0)$, any tangent cone of $V_{\mu_0}$ at $\yY$ is a stable stationary cone in $\Real^8$ with singular set of codimension at least two. Therefore, it follows from the regularity theorem in \cite{SchoenSimon} that the cone has an isolated singularity and so $\sing (\mu_0)$ is discrete. 
\end{proof}

We have the following consequence of Proposition \ref{ShrinkerMinimizersVarifoldProp}.
\begin{cor} \label{MainCor}
If $2\leq n\leq 6$ and $\mu\in\IM_n(\Real^{n+1})$ is a compact boundary measure, then $\lambda [\mu]\geq\lambda_n$ with equality if and only if $\mu=\mu_{\rho\mathbb{S}^n+\yY}$ for some $\rho>0$ and $\yY\in\Real^{n+1}$.
\end{cor}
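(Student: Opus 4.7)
The plan is to deduce the corollary from Propositions \ref{NonAmpleProp}, \ref{GeneralAsympProp}, and \ref{ShrinkerMinimizersVarifoldProp} via Huisken's monotonicity formula and a blow-up argument. First I would dispose of the case $\lambda[\mu]=\infty$ as trivial, and otherwise apply Proposition \ref{NonAmpleProp} to $\mu$ to produce an integral Brakke flow $\mathcal{K}=\{\mu_t\}_{t\geq 0}$ with $\mu_0=\mu$, finite extinction time $T_0$, and a spacetime point $(\xX_0,T_0)$ at which $\Theta_{(\xX_0,T_0)}(\mathcal{K})\geq 1$ and every tangent flow is collapsed at time $0$. By Theorem \ref{BlowupsThm}, any such tangent flow $\mathcal{T}\in\mathrm{Tan}_{(\xX_0,T_0)}\mathcal{K}$ has time $-1$ slice $\nu_{-1}\in\mathcal{SM}_n$ with $\mathbf{F}[\nu_{-1}]=\Theta_{(\xX_0,T_0)}(\mathcal{K})$, and since $\nu_{-1}$ is a self-shrinker one has $\lambda[\nu_{-1}]=\mathbf{F}[\nu_{-1}]$.

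For the inequality, Proposition \ref{MonotoneProp} together with the definition of entropy yields $\mathbf{F}[\nu_{-1}]\leq\lambda[\mu]$, so it suffices to prove $\lambda[\nu_{-1}]\geq\lambda_n$. Arguing by contradiction, suppose $\lambda[\nu_{-1}]<\lambda_n$, so $\nu_{-1}\in\mathcal{SM}_n(\lambda_n)$. Because the tangent flow is collapsed at time $0$, $\nu_{-1}$ is a collapsed self-shrinking measure, ruling out alternative \eqref{GAP2} of Proposition \ref{GeneralAsympProp}. Alternative \eqref{GAP1} is excluded by Proposition \ref{ShrinkerMinimizersVarifoldProp}, which gives $\mathcal{CSM}_n(\lambda_n)=\emptyset$ for $2\leq n\leq 6$. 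For alternative \eqref{GAP3}, one obtains $\hat{\nu}\in\mathcal{CSM}_k(\lambda_n)$ for some $1\leq k\leq n-1$; this set is empty for $2\leq k\leq n-1$ since $\mathcal{CSM}_k(\lambda_n)\subset\mathcal{CSM}_k(\lambda_k)$ by \eqref{StoneCompEqn} and the same proposition, while for $k=1$ it is empty because any element of $\mathcal{CSM}_1$ is a circle of entropy $\lambda_1>\lambda_n$ by \eqref{StoneCompEqn}. Each case is a contradiction, establishing $\lambda[\mu]\geq\lambda_n$.

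For the rigidity statement, suppose $\lambda[\mu]=\lambda_n$. Then the chain of inequalities above collapses, forcing $\mathbf{F}[\mu^{\xX_0,1/\sqrt{T_0}}]=\Theta_{(\xX_0,T_0)}(\mathcal{K})$, and the equality case of Proposition \ref{MonotoneProp} implies $\mathcal{K}$ is a self-similar shrinking flow centered at $(\xX_0,T_0)$. Hence $\mu$ is itself a translation-dilation of a compact self-shrinking measure $\tilde\mu\in\mathcal{CSM}_n$ with $\lambda[\tilde\mu]=\lambda_n$. Perturbing $\tilde\mu$ by the flow of a compactly supported vector field disjoint from $\sing(\tilde\mu)$ (which has Hausdorff dimension at most $n-3$ by Proposition \ref{SizesingularProp}) produces nearby compact boundary measures; applying the inequality part of the corollary to these perturbations shows that $V_{\tilde\mu}$ is entropy stable in the sense of \cite[Theorem 0.14]{CM}, and combining with \cite[Theorem 0.12]{CM} identifies $\tilde\mu$ as a round sphere $\mu_{\rho_0\mathbb{S}^n+\yY_0}$. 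The main technical point is this rigidity step: extracting genuine self-similarity of $\mathcal{K}$ from the single identity $\Theta_{(\xX_0,T_0)}(\mathcal{K})=\lambda[\mu]$, and then justifying the application of the Colding-Minicozzi entropy-stability classification to a possibly singular compact shrinker.
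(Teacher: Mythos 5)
Your proof is correct and follows essentially the same route as the paper's: apply Proposition \ref{NonAmpleProp} to produce a Brakke flow with a collapsed tangent flow, use the monotonicity formula to bound $\lambda[\nu_{-1}]$ by $\lambda[\mu]$, rule out non-compact and lower-dimensional split alternatives via Propositions \ref{GeneralAsympProp} and \ref{ShrinkerMinimizersVarifoldProp}, and then for rigidity exploit the equality case of monotonicity together with entropy-stability. The only cosmetic differences from the paper are that you argue the inequality by contradiction while the paper argues directly, and that for the $n=2$ compactness of $\nu_{-1}$ the paper invokes Corollary \ref{NonampleCor} whereas you run the $k=1$ case of alternative \eqref{GAP3} using Abresch-Langer; both are valid.
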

\begin{proof}
Suppose that $\mu\in\IM_n(\Real^{n+1})$ is a compact boundary measure and $\lambda [\mu]\leq\lambda_n$. By Proposition \ref{NonAmpleProp}, there exists an integral Brakke flow $\mathcal{K}$ with bounded area ratios and starting from $\mu$, and a point $(\yY,s)\in \Real^{n+1}\times\Real^+$ so that any tangent flow $\mathcal{T}=\set{\nu_t}_{t\in\Real}$ in $\mathrm{Tan}_{(\yY,s)}\mathcal{K}$ is collapsed at time $0$. By the lower semi-continuity of entropy, we have that $\lambda[\nu_{-1}]\leq\lambda[\mu]\leq\lambda_n$.

 We claim that $\nu_{-1}$ has compact support. For $n=2$, this claim follows directly from Corollary \ref{NonampleCor}. For $3\leq n\leq 6$ we argue by contradiction.  If $\nu_{-1}$ does not have compact support, then, by Lemma \ref{stratificationLem} and Corollary \ref{TanConeNonCollapseCor}, there is a $\hat{\nu}\in\mathcal{SM}_{n-1}$ so that $\lambda[\hat{\nu}]\leq\lambda[\nu_{-1}]\leq\lambda_n<\lambda_{n-1}$ and $\hat{\nu}\times \mu_\Real$ is collapsed. As $\hat{\nu}$ is not strongly non-collapsed, it follows from Proposition \ref{GeneralAsympProp} and the fact that $n\geq 3$ that either $\hat{\nu}\in \mathcal{CSM}_{n-1}(\lambda_{n-1})$ or there is a $\tilde{\nu}\in \mathcal{CSM}_k(\lambda_{n-1})$ for $k<n-1$. In either case, this contradicts Proposition \ref{ShrinkerMinimizersVarifoldProp} and verifies the claim. Thus, invoking again Proposition \ref{ShrinkerMinimizersVarifoldProp}, $\lambda[\nu_{-1}]=\lambda_n$ and, furthermore, by the monotonicity formula, the entropy is invariant along the flow. In particular, $\mathcal{K}$ is self-similar with respect to $(\yY,s)$ and $\mu^{\yY,1/\sqrt{s}}\in\mathcal{CSM}_n$ with $\lambda[\mu]=\lambda_n$. Hence it remains only to characterize the case of equality.

Notice that any deformation of $\reg(\mu)$ by a vector field gives a new compact boundary measure. Hence, as we have just shown, it is impossible to construct such deformations to decrease the entropy. As Proposition \ref{SizesingularProp} implies that $\mathcal{H}^{n-2}(\sing(\mu))=0$, we conclude that $V_{\mu}$ is entropy stable and so $\mu=\mu_{\sqrt{s}\, \mathbb{S}^n+\yY}$ by \cite[Theorem 0.14]{CM}.
\end{proof}

As closed hypersurfaces separate $\Real^{n+1}$, Theorem \ref{MainThm} follows by applying Corollary \ref{MainCor} to $\mu=\mu_\Sigma$ for any $\Sigma$ closed hypersurface in $\Real^{n+1}$ for $2\leq n\leq 6$. 

Our methods give an easy proof, for $2\leq n \leq 6$, of the existence of an entropy gap within $\mathcal{CSM}_n$ around $\mu_{\mathbb{S}^n}$. This is true in all dimensions when one considers only smooth self-shrinkers; see \cite[Theorem 0.6]{CIMW}.
\begin{cor}\label{EntGapCor}
There is an $\epsilon=\epsilon(n)>0$ so that if $2\leq n\leq 6$ and $\mu\in \mathcal{CSM}_n(\lambda_n+\epsilon_n)$, then $\mu=\mu_{\mathbb{S}^n}$.
\end{cor}
\begin{proof}
If there is no such $\epsilon_n$, then there is a sequence $\mu_i\in \mathcal{CSM}_n$ with $\lambda[\mu_i]>\lambda_n$ and so that $\lambda[\mu_i]\to\lambda_n<\frac{3}{2}$. By the same arguments as in the first paragraph of the proof of Lemma \ref{CpctShrinkerInductionLem}, up to  passing to subsequence and relabeling, the $\mu_i\to\mu\in\mathcal{SM}_n$ with $\lambda[\mu]\leq\lambda_n<\frac{3}{2}$. By Proposition \ref{OrientabilityLowEnt} and Lemma \ref{NoCpctAmpleShrinkerLem}, the $\mu_i$ are collapsed. Thus, Proposition \ref{NonTermOpenCondProp} implies that $\mu$ is also collapsed. Hence, arguing as in the proof of Corollary \ref{MainCor}, it follows that $\mu$ has compact support. Invoking Proposition \ref{OrientabilityLowEnt} again, $\mu$ is a compact boundary measure and so, by Corolloary \ref{MainCor}, $\mu=\mu_{\mathbb{S}^n}$. It follows from Allard's regularity theorem (see \cite[Theorem 24.2]{Simon}) that $\spt(\mu_i)\to\mathbb{S}^n$ in $C^\infty_{loc}(\Real^{n+1})$. Hence, for $i$ sufficiently large, $\spt(\mu_i)\cap\partial B_{2n}(\OO)=\emptyset$ and so by the avoidance principle, Proposition \ref{AvoidProp}, for such $i$, $\spt(\mu_i)\subset B_{2n}(\OO)$. The smooth convergence implies that, for $i$ sufficiently large, $\mu_i=\mu_{\Sigma_i}$ for a closed strictly convex self-shrinker $\Sigma_i$. By \cite[Theorem 4.1]{Huisken}, $\Sigma_i=\mathbb{S}^n$ and so $\lambda[\mu_i]=\lambda_n$. This contradiction proves the claim.
\end{proof}

Finally, we observe that Corollary \ref{MainCor} implies an entropy lower bound for certain non-compact self-shrinkers.
\begin{defn}\label{partiallycollapsedDefn}
 A $\mu\in \mathcal{SM}_n$ with $\spt(\mu)$ non-compact is \emph{partially collapsed} if there is $\yY\neq\OO$ so that if $\mathcal{K}$ is the associated Brakke flow to $\mu$, then $\Theta_{(\yY,0)}(\mathcal{K})\geq 1$ and some tangent flow $\mathcal{T}\in\mathrm{Tan}_{(\yY,0)}\mathcal{K}$ is collapsed at time $0$.
\end{defn}
This is a weaker notion than being collapsed. For instance,  the measure of a self-shrinker with one end asymptotic to a cylinder and another asymptotic to a smooth cone would be partially collapsed but not collapsed.

\begin{cor}\label{EntLowBndCollapsedCor}
For $3\leq n\leq 7$, if $\mu\in\mathcal{SM}_n$ has non-compact support and is partially collapsed, then $\lambda[\mu]\geq\lambda_{n-1}$ with equality if and only if, up to an ambient rotation, $\mu=\mu_{\mathbb{S}^{n-1}\times\Real}$.
\end{cor}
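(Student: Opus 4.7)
The plan is to reduce the statement to the closed-case machinery via the tangent flow supplied by the partially collapsed hypothesis. Let $\mathcal{K}$ be the Brakke flow associated to $\mu$, fix $\yY\neq\OO$ with $\Theta_{(\yY,0)}(\mathcal{K})\geq 1$, and pick a tangent flow $\mathcal{T}=\set{\nu_t}_{t\in\Real}\in\mathrm{Tan}_{(\yY,0)}\mathcal{K}$ that is collapsed at time $0$. Lemma \ref{stratificationLem} provides a splitting: up to an ambient rotation, $\nu_t=\hat{\nu}_t\times\mu_\Real$ with $\hat{\nu}_{-1}\in\mathcal{SM}_{n-1}$ and $\lambda[\hat{\nu}_{-1}]\leq\lambda[\mu]$. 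The key observation is that if $\hat{\nu}_{-1}$ were non-collapsed in $\Real^n$, the separation property would lift to make $\nu_{-1}$ non-collapsed in $\Real^{n+1}$, contradicting the hypothesis on $\mathcal{T}$; hence $\hat{\nu}_{-1}$ is collapsed as well. For the lower bound, suppose toward a contradiction that $\lambda[\mu]<\lambda_{n-1}$, so $\hat{\nu}_{-1}\in\mathcal{SM}_{n-1}(\lambda_{n-1})$. Applying Proposition \ref{GeneralAsympProp} in dimension $n-1\geq 2$: conclusion \ref{GAP1} is excluded by Proposition \ref{ShrinkerMinimizersVarifoldProp} (valid since $2\leq n-1\leq 6$); conclusion \ref{GAP2} contradicts the collapse of $\hat{\nu}_{-1}$; and conclusion \ref{GAP3} produces a compact factor in some $\mathcal{CSM}_k(\lambda_{n-1})\subset\mathcal{CSM}_k(\lambda_k)=\emptyset$ for $1\leq k\leq n-2$, where the containment uses $\lambda_{n-1}<\lambda_k$ from \eqref{StoneCompEqn} and the emptiness uses Proposition \ref{ShrinkerMinimizersVarifoldProp} together with the classification of compact one-dimensional self-shrinkers for $k=1$. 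This gives $\lambda[\mu]\geq\lambda_{n-1}$.

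For the equality case, assume $\lambda[\mu]=\lambda_{n-1}$. Running the previous argument with strict inequality as the contradiction hypothesis forces $\lambda[\hat{\nu}_{-1}]=\lambda_{n-1}$ exactly. To show $\hat{\nu}_{-1}$ has compact support, suppose to the contrary; then its associated Brakke flow has a point $\yY'\neq\OO$ with $\Theta_{(\yY',0)}\geq 1$ (picking a sequence in $\spt(\hat{\nu}_{-1})$ escaping to infinity, rescaling, and using upper semi-continuity from Corollary \ref{UpperctsCor}). A tangent flow there splits off yet another line by Lemma \ref{stratificationLem}, yielding $\nu'_{-1}\in\mathcal{SM}_{n-2}$ of entropy at most $\lambda_{n-1}<\lambda_{n-2}$. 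When $n\geq 4$, Proposition \ref{GeneralAsympProp} in dimension $n-2\leq 5$ together with Proposition \ref{ShrinkerMinimizersVarifoldProp} rules out conclusions \ref{GAP1} and \ref{GAP3}, leaving $\nu'_{-1}$ non-collapsed; when $n=3$, the classification of one-dimensional self-shrinkers in \cite{AL} identifies $\nu'_{-1}$ as a multiplicity-one line, which is itself non-collapsed. In either case $\nu'_{-1}\times\mu_\Real$ is non-collapsed, and Corollary \ref{TanConeNonCollapseCor} forces the Brakke flow of $\hat{\nu}_{-1}$ to be non-collapsed at time $0$, contradicting the collapse of $\hat{\nu}_{-1}$. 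Hence $\hat{\nu}_{-1}\in\mathcal{CSM}_{n-1}$; since $\lambda_{n-1}<3/2$, Proposition \ref{OrientabilityLowEnt} makes it a compact boundary measure, and Corollary \ref{MainCor} in dimension $n-1\leq 6$ identifies $\hat{\nu}_{-1}$ with $\mu_{\sqrt{2(n-1)}\mathbb{S}^{n-1}}$, the center and radius being pinned by the self-shrinker equation. In particular $\Theta_{(\yY,0)}(\mathcal{K})=\mathbf{F}[\nu_{-1}]=\lambda_{n-1}=\lambda[\mu]$.

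To identify $\mu$ itself as the cylinder, apply Huisken's monotonicity (Proposition \ref{MonotoneProp}) with base point $(\yY,0)$: the equality $\Theta_{(\yY,0)}(\mathcal{K})=\lambda[\mu]$ forces the monotone Gaussian quantity to be constant on $(-\infty,0)$, so the error integrand vanishes identically. Combined with the self-shrinker equation $\hH+\xX^\perp/2=0$ satisfied by $\mu$, this yields $\yY^\perp\equiv 0$ on $\reg(\mu)$, so the constant vector $\yY$ is tangent to $\mu$ everywhere. Thus $\mu=\hat{\mu}\times\Real\yY$ splits as a cylinder with $\hat{\mu}\in\mathcal{SM}_{n-1}$ in $\yY^\perp$. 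A direct computation shows that the tangent flow at $(\yY,0)$ of such a cylinder coincides with the original flow, so $\hat{\mu}=\hat{\nu}_{-1}$, which has just been identified as the round sphere; therefore $\mu=\mathbb{S}^{n-1}\times\Real$ up to rotation. The main obstacle I expect is the equality case, specifically the iterated splitting argument establishing compactness of $\hat{\nu}_{-1}$; the upper bound $n\leq 7$ is used precisely here to ensure Proposition \ref{ShrinkerMinimizersVarifoldProp} applies in dimension $n-2\leq 5$.
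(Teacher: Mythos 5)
Your argument is correct and follows essentially the same route as the paper's proof: it uses the associated Brakke flow, splits off a line via Lemma~\ref{stratificationLem} at the given point $\yY$, shows the resulting factor $\hat{\nu}_{-1}\in\mathcal{SM}_{n-1}$ is collapsed, derives compactness via Proposition~\ref{GeneralAsympProp} together with Proposition~\ref{ShrinkerMinimizersVarifoldProp}, and then invokes rigidity in Huisken's monotonicity formula to identify $\mu$ as the round cylinder. Where the paper compresses (``Following the same arguments as in the proof of Corollary~\ref{MainCor}'' for the compactness of $\hat{\nu}_{-1}$, and ``it follows from the monotonicity formula that $\mathcal{T}=\mathcal{K}$'' for the rigidity step), you correctly unpack the iterated splitting argument -- including the $n=3$ case handled via the Abresch--Langer classification -- and the two-center self-similarity / vanishing-of-$\yY^\perp$ argument; note also that the $k=1$ alternative you address in case~(\ref{GAP3}) never actually occurs, since the induction in Proposition~\ref{GeneralAsympProp} bottoms out at $k=2$.
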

\begin{proof}
Let $\mu\in\mathcal{SM}_n$ with non-compact support and $\mu$ is partially collapsed. Assume that $\lambda[\mu]\leq\lambda_{n-1}$. Consider the associated Brakke flow $\mathcal{K}$ to $\mu$. Then there is a point $\yY\neq\OO$ so that $\Theta_{(\yY,0)}(\mathcal{K})\geq 1$ and a $\mathcal{T}\in\mathrm{Tan}_{(\yY,0)}\mathcal{K}$ is collapsed at time $0$. By Lemma \ref{stratificationLem}, up to an ambient rotation, $\mathcal{T}=\set{\nu_t}_{t\in\Real}$ splits off a line. That is, $\nu_t=\hat{\nu}_t\times\mu_{\Real}$, where $\set{\hat{\nu}_t}_{t\in\Real}$ is the Brakke flow associated to $\hat{\nu}_{-1}\in\mathcal{SM}_{n-1}$. As $\mathcal{T}$ is collapsed at time $0$, both $\nu_{-1}$ and $\hat{\nu}_{-1}$ are collapsed. By the lower semi-continuity of entropy, $\lambda[\hat{\nu}_{-1}]\leq\lambda[\mu]\leq\lambda_{n-1}$. Note that the same argument as in the proof of Corollary \ref{MainCor} gives that $\hat{\nu}_{-1}$ has compact support. Thus, by Proposition \ref{ShrinkerMinimizersVarifoldProp} and Corollary \ref{MainCor}, $\lambda[\hat{\nu}_{-1}]=\lambda_{n-1}$ and $\hat{\nu}_{-1}=\mu_{\mathbb{S}^{n-1}}$. Hence it follows from the monotonicity formula in Proposition \ref{MonotoneProp} that $\mathcal{T}=\mathcal{K}$, i.e., $\mu=\mu_{\mathbb{S}^{n-1}\times \Real}$ as claimed.
\end{proof}

For $3\leq n\leq 7$, Theorem \ref{CollapsedShrinkers} directly follows from Corollary \ref{EntLowBndCollapsedCor} applied to $\mu=\mu_\Sigma$. However, when $n=2$, some care has to be taken as $\lambda_1>3/2$. Nevertheless, because $\Sigma$ is smooth, the result will follow by using the work of White \cite{WhiteMod2}.
\begin{proof}[Proof of Theorem \ref{CollapsedShrinkers} when $n=2$]
Suppose that $\lambda[\Sigma]\leq\lambda_1$. As $\Sigma$ is complete smooth embedded, the Brakke flow $\mathcal{K}$ associated to $\mu_\Sigma$ is cyclic mod $2$ in the sense of \cite[Definition 4.1]{WhiteMod2}. Hence, by \cite[Theorem 4.2]{WhiteMod2}, every tangent flow to $\mathcal{K}$ is cyclic mod $2$. In particular, exactly as in the proof of Corollary \ref{EntLowBndCollapsedCor}, one can use Lemma \ref{stratificationLem} to construct $\hat{\nu}_{-1}\in\mathcal{SM}_1$ which is collapsed, satisfies $\lambda[\hat{\nu}_{-1}]\leq\lambda_1<2$, and is such that $\partial [V_{\hat{\nu}_{-1}}]=0$. Here $[V_{\hat{\nu}_{-1}}]$ is the rectifiable mod $2$ flat chain associated to the integral varifold $V_{\hat{\nu}_{-1}}$; see \cite{WhiteMod2} for the specifics. A consequence of this last fact is that all tangent cones to $V_{\hat{\nu}_{-1}}$ consists of unions of \emph{even} numbers of rays. In particular, as $\lambda[\hat{\nu}_{-1}]<2$, $\hat{\nu}_{-1}=\mu_{\gamma}$ for some complete self-shrinker $\gamma\subset\Real^2$. By the classification of complete self-shrinkers in \cite{AL}, $\gamma$ must be either $\mathbb{S}^1$ or $\mathbb{R}^1$.  As the latter is non-collapsed, $\hat{\nu}_{-1}=\mu_{\mathbb{S}^1}$. Then, following the same argument as in Corollary \ref{EntLowBndCollapsedCor}, $\mathcal{K}$ is self-similar and, up to an ambient rotation, $\Sigma=\Real\times\mathbb{S}^1$.
\end{proof}

\appendix
\section{Proof of Lemma \ref{ApproxLem}}\label{ProofApprox}
In this appendix, we present a complete proof of Lemma \ref{ApproxLem} -- while this is standard, we could not find a reference in the literature. First, it follows from \cite[Equation (1.12) and Theorem 1.24]{Giusti} that there exists a sequence of bounded open sets $E_j$ of finitely many components with $\partial E_j$ smooth embedded such that: $B_{r}(\xX)\subset E_j\subset B_{2R}(\xX)$, $\chi_{E_j}\to\chi_E$ in $L^1(\Real^{n+1})$ and 
\begin{equation}
\lim_{j\to\infty}\int\abs{D\chi_{E_j}}=\int\abs{D\chi_E}.
\end{equation}

Next, fix any $f\in C^0_c(\Real^{n+1},\Real^{\geq 0})$. By the lower semi-continuity, 
\begin{equation}\label{LowcontEqn}
\liminf_{j\to\infty}\int f\abs{D\chi_{E_j}}\geq\int f\abs{D\chi_E}.
\end{equation}
On the other hand, setting $M=\norm{f}_{C^0}$,
\begin{equation}
\begin{split}
\int M\abs{D\chi_E} & =\lim_{j\to\infty}\int M\abs{D\chi_{E_j}}=\lim_{j\to\infty} \int f\abs{D\chi_{E_j}}+\int (M-f)\abs{D\chi_{E_j}}\\
& \geq\limsup_{j\to\infty}\int f\abs{D\chi_{E_j}}+\liminf_{j\to\infty}\int (M-f)\abs{D\chi_{E_j}}\\
& \geq\limsup_{j\to\infty} \int f\abs{D\chi_{E_j}}+\int (M-f)\abs{D\chi_E}.
\end{split}
\end{equation}
In the last inequality above, we observe that
\begin{equation}
\int (M-f)\abs{D\chi_{E_j}}=\int\phi (M-f)\abs{D\chi_{E_j}},
\end{equation}
where $\phi$ is chosen to be a cut-off function with $\phi=1$ on a sufficiently large ball containing $E_j$ and $E$, and then appeal to \eqref{LowcontEqn}. Hence,
\begin{equation}\label{UppercontEqn}
\limsup_{j\to\infty}\int f\abs{D\chi_{E_j}}\leq\int f\abs{D\chi_E}.
\end{equation}

Therefore, combining \eqref{LowcontEqn} and \eqref{UppercontEqn} gives that for all $f\in C_c^0(\Real^{n+1},\Real^{\geq 0})$,
\begin{equation}
\lim_{j\to\infty}\int f\abs{D\chi_{E_j}}=\int f\abs{D\chi_E},
\end{equation}
that is, $\abs{D\chi_{E_j}}\to\abs{D\chi_E}$ in the sense of Radon measures.

\begin{acknowledgement*} 
The second author would like to thank Richard Bamler and Brian White for helpful conversations regarding the proof of Proposition \ref{OrientabilityLowEnt}. She is also very grateful to Neshan Wickramasekera for inviting her to visit CMS of Cambridge University in Summer 2013 where this project was initiated.
\end{acknowledgement*}

\end{document}